  \crefname{theorem}{Theorem}{Theorems}
  \crefname{conjecture}{Conjecture}{Conjectures}
  \crefname{thm}{Theorem}{Theorems}
  \crefname{thm*}{Theorem*}{Theorems}
  \crefname{lemma}{Lemma}{Lemmas}
  \crefname{lem}{Lemma}{Lemmas}
  \crefname{remark}{Remark}{Remarks}
  \crefname{prop}{Proposition}{Propositions}
\crefname{notation}{Notation}{Notations}
\crefname{claim}{Claim}{Claims}
  \crefname{defn}{Definition}{Definitions}
  \crefname{corollary}{Corollary}{Corollaries}
  \crefname{section}{Section}{Sections}
  \crefname{figure}{Figure}{Figures}
    \crefname{assumption}{Assumption}{Assumptions}
\newtheorem{thm}{Theorem}[section]
\newtheorem{thm*}{Theorem*}[section]
\newtheorem{conjecture}[thm]{Conjecture}
\newtheorem{claim}[thm]{Claim}
\newtheorem{lemma}[thm]{Lemma}
\newtheorem{corollary}[thm]{Corollary}
\newtheorem{prop}[thm]{Proposition}
\newtheorem{question}[thm]{Question}
\numberwithin{equation}{section}
\theoremstyle{definition}
\def\cS{\mathcal{S}}
\def\cR{\mathcal{R}}
\def\cO{\mathcal{O}}
\def\cN{\mathcal{N}}
\def\cL{\mathcal{L}}
\def\cI{\mathcal{I}}
\def\cE{\mathcal{E}}
\def\cB{\mathcal{B}}
\def\cA{\mathcal{A}}
\def \ve {\varepsilon}
\def\P{\mathbb{P}}
\def\R{\mathbb{R}}
\def\Z{\mathbb{Z}}
\def\N{\mathbb{N}}
\def\T{\mathbb{T}}
\def  \p- {p\textunderscore}
\def\1{\mathbf{1}}
\def\eps{\varepsilon}
\def\zero{\bm{0}}
\title{On the local convergence of integer-valued Lipschitz functions on regular trees.}
\author{Nathaniel Butler and Kesav Krishnan and Gourab Ray and Yinon Spinka}
\date{\today}
\author{Nathaniel Butler\thanks{University of Victoria. Research supported in part by NSERC 50311-57400.} \and Kesav Krishnan\footnotemark[1]\thanks{University of Victoria. Research supported in part by PIMS postdoctoral fellowship} \and Gourab Ray \footnotemark[1]\thanks{ University of Victoria. Research supported in part by NSERC 50311-57400. Email:gourabray@uvic.ca } \and Yinon Spinka\thanks{Tel Aviv University. Research supported in part by ISF grant 1361/22} }
\begin{document}
\maketitle
\begin{abstract}
We study random integer-valued Lipschitz functions on regular trees. It was shown by Peled, Samotij and Yehudayoff~\cite{PWA_tree} that such functions are localized, however, finer questions about the structure of Gibbs measures remain unanswered.
Our main result is that the weak limit of a uniformly chosen 1-Lipschitz function with 0 boundary condition on a $d$-ary tree of height $n$ exists as $n \to \infty$ if $2 \le d \le 7$, but not if $d \ge 8$, thereby partially answering a question from~\cite{PWA_tree}. 
For large $d$, the value at the root alternates between being almost entirely concentrated on 0 for even $n$ and being roughly uniform on $\{-1,0,1\}$ for odd $n$, leading to different limits as $n$ approaches infinity along evens or odds. For $d \ge 8$, the essence of this phenomenon is preserved, which obstructs the convergence. For $d \le 7$, this phenomenon ceases to exist, and the law of the value at the root loses its connection with the parity of $n$. Along the way, we also obtain an alternative proof of localization.
The key idea is a fixed point convergence result for a related operator on $\ell^\infty$, and a procedure to show that the iterations get into a `basin of attraction' of the fixed point.

We also prove some accompanying analogous `even-odd phenomenon' type results about $M$-lipschitz functions on general non-amenable graphs with high enough expansion (this includes for example the large $d$ case for regular trees). We also prove a convergence result for 1-Lipschitz functions with $\{0,1\}$ boundary condition. This last result relies on an absolute value FKG for uniform 1-Lipschitz functions when shifted by $1/2$.
\end{abstract}

\section{Introduction}
The main focus of this article is to study the structure of Gibbs measures of uniform $M$-Lipschitz function on regular trees. Our broader goal is to better understand the structure of Gibbs measures  for random surface models with hardcore constraints on graphs of hyperbolic flavour. The theory of Bernoulli percolation on such graphs  has seen tremendous progress in the past few decades \cite{lyons2017probability,bperc96}. In contrast, the results on  other popular statistical physics models like the Ising model on such graphs have been few and far between \cite{h_ising,HSS_00,JS_99,georgii2011gibbs}. On the other hand, the study of height function models on planar lattices, particularly the structure of Gibbs measures on them,  have garnered a lot of attention in recent years \cite{chandgotia2018delocalization,duminil2022logarithmic,LT_24,peled2017high}. For finite graphs like expanders and wired regular trees, it is known \cite{PWY_expander,PWA_tree,BHM_00,G_03} that the height function models are mostly flat and the height function has a doubly exponential tail. However, finer questions about the existence of local weak limits and its dependence on boundary conditions remain unanswered. Our main focus is to initiate the study of weak local convergence of height functions on regular trees imposing various boundary conditions on the leaves. An even broader goal is to classify all possible extremal Gibbs measures on trees. 

Let us fix some notations before getting into  our main results. An \textbf{$M$-Lipschitz function} on a graph $G=(V,E)$ is a function $f \colon V \to \Z$ satisfying that $$|f(u) - f(v)| \le M\text{ for all }u \sim v,$$ where $u \sim v$ means $u$ is a neighbour of $v$. If $M=1$, we drop the $M$ from the terminology, and simply refer to the function as a Lipschitz function. A $d$-ary tree is a connected graph with no cycles such that every vertex has degree $d+1$ except one vertex, called the root vertex, which has degree~$d$. We denote by $\mathbb T_d$ the $d$-ary tree\footnote{Most of our stated results continue to hold with no changes also for the $(d+1)$-regular tree, see \Cref{sec:open}.} with the root vertex denoted $\rho$. Let $\T_d^n$ denote the finite subgraph of $\T_d$ induced by the vertices within distance $n$ from $\rho$. For a finite set $S \subset \Z$, let $\cL_{n,d}(S)$ denote the set of Lipschitz functions on $\T_d^n$ where the values on the leaves are constrained to be in $S$. Let $\mu_{n,d}^{S}$ denote the uniform law on $\cL_{n,d}(S)$. In the special case $S= \{0\}$, which we refer to as zero boundary conditions, we write $\mu_{n,d}^0$ and $\cL_{n,d}^0$ for short.


The main result in this article is the existence of a transition in the convergence behavior of random 1-Lipschitz functions under zero boundary conditions.
\begin{thm}\label{thm:main_lipschitz}
    $\mu^0_{n,d}$ converges as $n \to \infty$ if and only if $2 \le d \le 7$. 
\end{thm}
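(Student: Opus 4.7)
\medskip
\noindent \textbf{Setup and reduction.} The plan is to reduce the convergence of $\mu_{n,d}^0$ to a one-dimensional fixed-point problem for an operator on $\cP(\Z)$. By the Markov property of Gibbs measures on the tree, cutting at any vertex produces conditionally independent subtree contributions, each encoded entirely by the law of the cut-vertex value. Consequently, local convergence of $\mu_{n,d}^0$ around $\rho$ at any finite radius reduces to convergence of the one-point marginals $p_n(k) := \mu_{n,d}^0(f(\rho) = k)$ in $\cP(\Z)$. A partition-function computation at $\rho$ gives the recursion
\[
    p_n = T(p_{n-1}), \qquad T(p)(k) := \frac{\bigl(p(k-1)+p(k)+p(k+1)\bigr)^d}{\sum_j \bigl(p(j-1)+p(j)+p(j+1)\bigr)^d}, \qquad p_0 = \delta_0.
\]
Hence \Cref{thm:main_lipschitz} amounts to showing that $(T^n \delta_0)$ converges in $\cP(\Z)$ if and only if $2 \le d \le 7$.

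\medskip
\noindent \textbf{The case $2\le d \le 7$.} First, produce a symmetric fixed point $p^\ast = T(p^\ast)$ via a Schauder argument on the compact, $T$-invariant set of symmetric probability measures with doubly-exponentially decaying tails (such a set is available by the localization estimates of~\cite{PWA_tree}). Second, compute the Fr\'echet derivative $L := DT|_{p^\ast}$ on $\ell^\infty(\Z)$ restricted to the subspace of symmetric, mean-zero perturbations, and show that its spectral radius is strictly less than~$1$ when $d \le 7$. Third, establish that the orbit actually enters the basin of attraction of $p^\ast$: since $p_0 = \delta_0$ is far from $p^\ast$, local linearization alone is insufficient, and I would proceed by tracking a small number of summary observables (say $p_n(0)$ and the ratios $p_n(\pm j)/p_n(0)$ for small $j$), showing a monotonicity property of $T^2$ on an appropriate cone after a burn-in period, and funneling the orbit into a small $\ell^\infty$-neighborhood where the spectral estimate takes over.

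\medskip
\noindent \textbf{The case $d\ge 8$ and main obstacle.} For $d \ge 8$ construct two distinct symmetric fixed points $p_e^\ast \ne p_o^\ast$ of $T^2$ satisfying $T(p_e^\ast) = p_o^\ast$; these are the two subsequential limits obstructing convergence. For very large $d$ they live near $\delta_0$ and the uniform measure on $\{-1,0,1\}$ respectively, so their existence follows from a perturbative argument. To capture the sharp threshold $d = 8$, I would show that the eigenvalue of $L$ governing the stability calculation above crosses $-1$ precisely between $d = 7$ and $d = 8$, producing a period-doubling bifurcation and hence a genuine $2$-periodic attractor near $p^\ast$. A basin-of-attraction argument for $T^2$ analogous to the one above then forces $T^{2n}\delta_0 \to p_e^\ast$ and $T^{2n+1}\delta_0 \to p_o^\ast$, ruling out convergence of $(p_n)$. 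The principal difficulty I expect is the quantitative spectral computation pinning down the cutoff between $d=7$ and $d=8$: the operator $L$ is infinite-dimensional, non-self-adjoint, and has coefficients depending on the only implicitly-defined fixed point $p^\ast$, so rigorous bounds on its spectral radius near the critical value will likely require either a controlled low-dimensional truncation with explicit error estimates, or a structural argument exploiting the convolution form of $T$. A secondary hurdle is the global-to-local step in both cases, which must channel the far-off initial datum $\delta_0$ into the linearization regime without access to a Lyapunov function.
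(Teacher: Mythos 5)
Your overall strategy --- reduce to a one-dimensional recursion on $\cP(\Z)$, find a fixed point, linearize, and funnel the orbit into a neighborhood where the linearization contracts --- is the same broad plan the paper follows. However, there are several concrete places where your proposal either omits the idea that makes the argument tractable or overcomplicates matters.

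First, you propose to work directly with the normalized operator $T$ on $\cP(\Z)$ and estimate the spectral radius of its Fréchet derivative at the fixed point. The paper instead performs a change of coordinates via the ratio map $\mathsf{R}(z)_i = z_i/z_{i-1}$, conjugating $T$ to an operator $\psi = \mathsf{R}\circ T\circ\mathsf{R}^{-1}$ on sequences of ratios. This kills the normalizing factor $\sum_j(p(j-1)+p(j)+p(j+1))^d$ entirely, so $\psi$ has explicit, tridiagonal-Jacobian structure ($\partial\psi_i/\partial x_j = 0$ for $|i-j|>1$) and each coordinate map is a tractable algebraic expression. Your hint at "tracking the ratios $p_n(\pm j)/p_n(0)$" is exactly the right instinct, but without elevating it to a full conjugation, the derivative of $T$ retains nonlocal terms coming from the normalization, and the computations would be considerably messier.

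Second, your reliance on \emph{spectral radius} $< 1$ rather than \emph{operator norm} $< 1$ is a genuine gap. Spectral radius $< 1$ gives only eventual contraction of the linearization, not contraction of the nonlinear map on a neighborhood; and the paper explicitly points out that $D\psi$ fails to be a contraction in the standard $\ell^\infty$ norm for $d \ge 4$, even very close to the fixed point. The paper resolves this by choosing a bespoke norm $\|x\| = |x_1| + \sup_{i\ge 2}|x_i|$ (for $3 \le d \le 7$), equivalent to $\ell^\infty$ but tuned so that $\|D\psi_x\| < 1$ uniformly over the basin $\cS_{a_*,b_*,c_*}$, which is a convex, $\psi$-invariant set. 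Without identifying the correct norm, the spectral estimate does not close the argument; and numerically certifying a spectral radius bound for an infinite-dimensional, non-self-adjoint operator with coefficients at an implicitly-defined fixed point is substantially harder than the explicit closed-form bounds on $|\partial\psi_i/\partial x_j|$ that the paper derives.

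Third, for $d \ge 8$ your plan is an overshoot. You propose to construct a period-2 orbit of $T$, establish a period-doubling bifurcation with an eigenvalue crossing $-1$, and then prove a basin-of-attraction result for $T^2$. The paper needs none of this: it shows by direct algebraic estimates that $\psi^{(2k)}(\zero)_1 \ge 0.4$ while $\psi^{(2k+1)}(\zero)_1 \le (0.78)^d \le 0.14$ for all $k$, which immediately rules out convergence of the iterates. No fixed point of $T^2$ need be exhibited, and no bifurcation-theoretic machinery is invoked; the argument is a page-long verification using the same envelope dynamics $\cS_{a,b,c}$ that are used in the convergence proof. Your perturbative existence argument for the two fixed points of $T^2$ at large $d$ would also not by itself reach down to $d=8$ where the analysis is tightest.

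Finally, the "global-to-local" step is exactly where the hard work lives, and your proposal is vaguest there. The paper's answer is the envelope dynamics: $\cS_{a,b,c}$ is a three-parameter family of subsets of ratio space, closed under $\psi$ in a controlled way ($\psi(\cS_{a,b,c}) \subset \cS_{\varphi(a,b,c)}$ for an explicit three-dimensional map $\varphi$), and the paper shows by careful one-dimensional fixed-point analysis of $\varphi$ and its two-step iterates that every orbit starting from $\cS_{0,1,c}$ eventually enters the contraction basin. Your proposal's appeal to "monotonicity of $T^2$ on an appropriate cone" is plausible but you would still need to identify the cone and prove the entry; the paper's solution is to replace the infinite-dimensional iteration with a finite-dimensional surrogate one can actually compute with. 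So: correct high-level outline, but the ratio conjugation, the choice of norm (as opposed to spectral radius), and the explicit envelope dynamics are the missing engine, and the $d\ge 8$ direction is considerably simpler than the bifurcation program you sketch.
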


The convergence above is in the local weak sense.
Actually, the convergence result for $2 \le d \le 7$ in \Cref{thm:main_lipschitz} that we prove is more general: it holds for symmetric weighted boundary conditions on the leaves which satisfy a certain decay condition (we call these the good weight sequences in \Cref{sec:convergence}).
The result implies in particular  that for any $k \ge 1$, the measures $\mu^{\{-k,\dots,k\}}_{n,d}$ converge to the same limit as that with zero boundary conditions. See \Cref{thm:main_conv2} for a complete statement of the full result.

In contrast, the next result shows that for other boundary conditions, the above convergence vs. non-convergence phenomenon does not occur. 
\begin{thm}\label{thm:main_lipschitz_FKG}
$\mu^{\{0,1\}}_{n,d}$ converges as $n \to \infty$ for all $d \ge 2$.
\end{thm}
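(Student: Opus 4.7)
The strategy exploits the reflection symmetry of the $\{0,1\}$ boundary condition around $\tfrac12$. Set $g := f - \tfrac12$, so that under $\mu_{n,d}^{\{0,1\}}$ the shifted function $g$ takes values in $\Z + \tfrac12$ at every vertex, still satisfies the 1-Lipschitz constraint, and has boundary values in $\{-\tfrac12, +\tfrac12\}$. This boundary set is invariant under the involution $g \mapsto -g$, and because $g$ takes half-integer values uniformly across all depths (not just on a parity-dependent sublattice), the parity-based obstruction responsible for the non-convergence in the zero-boundary case of \Cref{thm:main_lipschitz} simply does not apply here.

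The heart of the proof, as announced in the abstract, is an FKG inequality for the absolute value field $|g|$ under $\mu_{n,d}^{\{0,1\}}$. I would establish this by verifying the Holley/FKG lattice condition on the marginal law of $|g|$, obtained by summing out the signs of $g$. The reflection symmetry $g \leftrightarrow -g$ reduces the required four-point inequality to a combinatorial count of Lipschitz extensions of $|g|$-configurations, where neighboring values differ by $0$ or $1$ on the lattice $\tfrac12 + \Z_{\ge 0}$ and the symmetric sign-counting makes the required inequality local at each pair of neighbors.

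Given absolute value FKG, convergence of $\mu_{n,d}^{\{0,1\}}$ follows by a boundary-comparison argument. For fixed $k < n$, the law of $|g|$ on $\T_d^k$ under $\mu_{n,d}^{\{0,1\}}$ agrees with that under a modified measure $\mu_{k,d}^{\psi_n}$, where $\psi_n$ is the effective boundary condition at level $k$ obtained by integrating out the $n-k$ deeper levels of Lipschitz extensions with $\{\pm\tfrac12\}$ at the ultimate leaves. This effective boundary assigns positive mass to $|g|$-values larger than $\tfrac12$ at level $k$, so it stochastically dominates the deterministic $|g|\equiv\tfrac12$ boundary in the FKG order on $|g|$. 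Combined with FKG monotonicity of the specification, this gives stochastic monotonicity of the sequence $(\mu_{n,d}^{\{0,1\}}|_{\T_d^k})_n$ on the $|g|$-lattice. Tightness follows from the localization estimates (doubly exponential tails of $f(\rho)$, coming from \cite{PWA_tree} or the alternative proof provided in this paper), so monotone convergence yields limits for all finite-dimensional marginals of $|g|$. Convergence of $g$ itself---and hence of $f$---is then recovered from the symmetry $g \mapsto -g$ together with the tree's Markov property: conditional on $|g|$, the sign field is constrained by the Lipschitz condition to be locally constant away from $|g|=\tfrac12$, and the symmetry of the measure forces the sign marginals to be uniform, so convergence of $|g|$-marginals lifts to convergence of signed marginals.

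The main obstacle will be verifying the absolute value FKG. Uniform measures on Lipschitz functions do not manifestly satisfy lattice conditions, and the shift by $\tfrac12$ is essential rather than cosmetic: it places $g$ on a half-integer lattice whose reflection symmetry matches the boundary condition, and ensures that every allowed neighbor transition for $|g|$ is a single lattice step with a symmetric count of sign choices to integrate out. Without this shift, as in the zero-boundary setting, no analogous FKG for $|f|$ on $\Z_{\ge 0}$ can close the boundary-comparison argument, which is consistent with the non-convergence for $d \ge 8$ proved in \Cref{thm:main_lipschitz}.
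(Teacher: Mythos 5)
Your proposal follows essentially the same route as the paper: shift by $\tfrac12$, establish an FKG/monotonicity property for $|f-\tfrac12|$ via Holley's criterion, deduce that the absolute-value field is stochastically increasing in $n$ via comparison between boundary conditions, combine with tightness from localization, and recover the signed field by assigning independent uniform signs to the connected components where $|f-\tfrac12|$ exceeds $\tfrac12$. The only cosmetic differences are that the paper obtains tightness by sandwiching $\mu_{n,d}^{\{0,1\}}$ between $\mu_{n,d}^0$ and $\mu_{n,d}^1$ rather than citing localization directly, and it packages the monotone-plus-tight step as a general localization/delocalization dichotomy (\Cref{cor:dichotomy}).
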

The above theorem is a consequence of a version of an FKG property for the absolute value of the Lipschitz function \emph{when shifted by $1/2$}.

It is easy to see for any finite $S$  that $\lim_{n \to \infty}\mu^S_{n,d}$, when it exists, is a tree-indexed Markov chain. Furthermore, it is not hard to check that no such Markov chain (obtained as a limit of $\mu^S_{n,d}$) is a non-trivial mixture of other similar Markov chains (simply apply ergodic theorem along a ray). In particular, when $2 \le d \le 7$, the measures $\{\mu^{i}_{d}\}_{i \in \Z}$ obtained in \cref{thm:main_lipschitz} and the measures $\{\mu^{\{i,i+1\}}_d\}_{i \in \Z}$ obtained in \cref{thm:main_lipschitz_FKG} are Markov chains, none of which is a mixture of the others.

\medskip

We also prove a general result which works beyond trees and for general $M$. Let $G$ be an infinite bipartite connected graph with maximum degree $d$ and Cheeger constant $h$. Recall that the Cheeger constant of a graph is defined as 
$$
h=h(G):=\inf_{S \subset V, 0<|S|<\infty} \frac{|\partial S|}{|S|},
$$
where $\partial S$ is the set of vertices in $G$ which are not in $S$ but have at least one neighbour in $S$, and $|\cdot|$ denote the cardinality. Color the partite classes of $G$ black and white. We establish the existence of (multiple forms of) long-range order for $M$-Lipschitz functions on $G$ in the regime where $h \gg M,d$. Observe that since trivially $h \le d$, the condition $h \gg M$ requires that $d \gg M$. Furthermore, note that a $d$-regular tree has Cheeger constant $d-2$.


    Given a finite set $U$ of $G$, and finite sets $S_\circ,S_\bullet \subset \Z$, let $\cL_{U,M}(S_\circ,S_\bullet)$ denote the set of all $M$-lipschitz functions on $U \cup \partial U$ with the values on $\partial U$ constrained to be in $S_\circ$ or $S_\bullet$ according to their color. Given integers $a \le b$, let $\nu^{a,b}_{U,M}$ denote the uniform measure on $\cL_{U,M}(S_\circ,S_\bullet)$ with $S_\circ=\{a,\dots,b\},S_\bullet=\{b-M,\dots,a+M\}$.
    An exhaustion of $G$ is a sequence of finite sets increasing to $G$, i.e., $G_1\subset G_2\subset \cdots$ and $\cup_{i \ge 1} G_i = G$.

\begin{thm}\label{thm:main_general}
   Assume that $h \ge 4M\log(3d^4(4M+1))$.
       Let $a \le b$ be integers such that $b-a \le 2M$.
       \begin{itemize}
           \item [a.] Let $(G_n)$ be an exhaustion. Then $\nu_{G_n,M}^{a,b}$ converges as $n\to \infty$ to a Gibbs measure $\nu_{G,M}^{a,b}$.
           \item [b.] The measures $\nu_{G,M}^{a,b}$ are distinct for different pairs $(a,b)$.
       \end{itemize}
   \end{thm}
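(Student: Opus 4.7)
My plan is to reduce both parts of the theorem to a single Peierls-type long-range order estimate. Call a vertex $v$ \emph{bad} under a configuration $f$ if $f(v) \notin I_v$, where
\[ I_v := \begin{cases} \{a,\ldots,b\} & \text{if } v \text{ is white}, \\ \{b-M,\ldots,a+M\} & \text{if } v \text{ is black}. \end{cases} \]
These intervals are the maximal ones compatible with ``ground-state'' $M$-Lipschitz configurations matching the prescribed boundary shapes (a quick check: for neighboring white $w$ and black $u$ both in the good range, $|f(w)-f(u)| \le M$ is automatic when $b-a\le 2M$). The central estimate I would establish is that under $\nu_{G_n,M}^{a,b}$,
\[ \P(v \text{ is bad}) \le C \exp(-c\, h\, \dist(v, \partial G_n)) \]
for explicit constants $c,C>0$, with $c$ decreasing polynomially in $M\log d$.

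I would prove this Peierls estimate by a shift-and-encode argument. Given $f$ with bad set $A$, the vertex set $\partial A$ is good by definition. Decompose $A$ using a level-set scheme indexed by how far $f(v)$ lies outside $I_v$, processing level sets from the outermost inward; on each level set, shift $f$ by $\pm(2M+1)$ toward the good range. The boundary of each such level set lies in the allowed range for its color, so the shift preserves the $M$-Lipschitz constraint. To invert the map, one encodes the cumulative shift amount and direction using data on $\partial A$; this requires at most $\log(3d^4(4M+1))$ bits per boundary vertex, where the factor $d^4$ handles the local combinatorial encoding of level-set boundaries (a small ball of radius $O(1)$ around each $\partial A$-vertex) and $4M+1$ bounds the admissible shift amounts. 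The isoperimetric inequality $|\partial A| \ge h|A|$, combined with the hypothesis $h \ge 4M\log(3d^4(4M+1))$, then yields exponential decay in $|A|$ for the probability that $A \subset G_n$ is the bad set, and a union bound over $A\ni v$ completes the estimate.

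For part (a), fix a finite $U\subset G$ and let $L_r$ denote the sphere of radius $r$ around $U$ inside $G_n$ (with $r \ll \dist(U,\partial G_n)$). By the Peierls estimate and a union bound, with probability at least $1-\varepsilon(r)$ uniformly in $n$, every vertex of $L_r$ is good. On this event the spatial Markov property of uniform $M$-Lipschitz measures implies that, conditional on $f|_{L_r}$, the law of $f|_U$ depends only on $f|_{L_r}$ (which lies in the finite set $\prod_{v\in L_r}I_v$) and the fixed geometry of the $r$-neighborhood of $U$, independent of $n$. Iterating the Peierls estimate on a larger shell $L_R$ stabilizes the law of $f|_{L_r}$ in $n$ up to total variation error $o_R(1)$, producing a Cauchy sequence of finite-dimensional marginals that converges to a Gibbs measure $\nu_{G,M}^{a,b}$. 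For part (b), the same Peierls estimate carried to the weak limit yields $\nu_{G,M}^{a,b}(f(v)\in I_v)=1$ for every $v$; since $I_v = \{a,\ldots,b\}$ when $v$ is white, the one-point marginal at any white vertex recovers $(a,b)$, so distinct pairs yield distinct limits.

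The main obstacle is the Peierls estimate itself. The subtle points are (i) badness can point in either direction from $I_v$ and with varying magnitude, so a one-shot ``shift the bad components'' map does not work -- a level-set decomposition is essential so that each individual shift is a well-defined $M$-Lipschitz modification; (ii) the encoding map must be injective using only data on $\partial A$, which requires careful bookkeeping of the shift history at each boundary vertex (in particular, distinguishing the direction and total magnitude of the shift local to each component); and (iii) the numerical constants $d^4$ and $4M+1$ in the entropy bound must line up exactly with the hypothesis $h \ge 4M\log(3d^4(4M+1))$. A secondary, more routine difficulty in part (a) is iterating the Peierls argument on a larger shell $L_R$ to upgrade single-site concentration into Cauchy stability of the law of $f|_{L_r}$.
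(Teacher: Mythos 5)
Your overall Peierls-plus-coupling framework is the right shape, and you correctly identify the ``bad set'' and the role of the isoperimetric constant, but there are three genuine gaps, two of which are serious.

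\textbf{Part (b) is wrong as stated.} You claim the Peierls estimate ``carried to the weak limit yields $\nu_{G,M}^{a,b}(f(v)\in I_v)=1$ for every $v$.'' It does not: the estimate gives $\P(v\text{ bad})\le (e^{h/4M}-1)^{-1}$, which is small but strictly positive, and indeed every vertex is bad with positive probability under $\nu_{G,M}^{a,b}$. One cannot read off $(a,b)$ from a one-point marginal by a probability-one event. The paper instead fixes an odd $u$, looks at the random interval $\{\cN^-,\dots,\cN^+\}$ spanned by the values on $N(u)$, and shows that the event $\{\cN^-,\dots,\cN^+\}=\{a,\dots,b\}$ has probability exceeding $1/2$, so that $\{a,\dots,b\}$ is the unique largest atom of a random set determined by the measure. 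Something of that flavour (or reading $(a,b)$ off the mode of the one-point marginal, with a comparable quantitative argument) is needed; the probability-one claim must be dropped.

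\textbf{Part (a) does not close.} Knowing that every vertex of a shell $L_R$ is good with probability $1-\varepsilon(R)$ does not, by itself, stabilize the law of $f|_{L_r}$ in $n$: the conditional law of $f|_{L_r}$ given $f|_{L_R}$ is $n$-independent (spatial Markov), but the marginal law of $f|_{L_R}$ still depends on $n$, so ``iterating on a larger shell'' just pushes the dependence outward without eliminating it. The missing idea is to explore the bad set \emph{from the boundary}: let $X$ be the union of the distance-two connected components of $B(f)\cup B(f')\cup(G\setminus H)$ that attach to $G\setminus H$, and set $Y=G\setminus(X\cup\partial X\cup\partial(X\cup\partial X))$. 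Because $\partial Y$ is good and every vertex of $\partial(X\cup\partial X)$ adjacent to $Y$ is also good, the conditional law of $f|_Y$ given $X$ and $f|_{X\cup\partial X}$ is \emph{exactly} $\nu_{Y,M}^{a,b}$, a fully explicit $n$-independent measure. That ``reset past the good separating surface'' is what lets you couple $\nu_{H,M}^{a,b}$ and $\nu_{H',M}^{a,b}$ once the bad set attached to the boundary avoids $B(v,R)$. Without it, the Cauchy claim is unsupported.

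\textbf{The Peierls estimate via shift-and-encode is under-specified and likely not salvageable in the stated form.} If a white $v$ has $f(v)=b+k$ with $1\le k\le 2M$, shifting its level set by $2M+1$ overshoots the good interval $\{a,\dots,b\}$ (since $b-a\le 2M$), and there is no obvious period for $M$-Lipschitz functions that makes a fixed-amount shift land in range. The paper sidesteps this entirely by using a \emph{one-to-many} ``cloud'' map $T$ (following \cite{PWY_expander}): $T(g)$ is the set of all $g'\in\cL$ agreeing with $g$ off $A\cup\partial A$ and taking arbitrary good values on $A\cup\partial A$. One then bounds $|T(g)|\ge t$ and $\max_{g'}|\{g:g'\in T(g)\}|\le s$, with $t$ coming from free good choices on $A\cup\partial A$ and $s$ coming from (i) at most $4M+1$ choices per vertex of $A$ (determined up to $\pm 2M$ by a distance-two good neighbour) and (ii) at most $b-a$ or $2M+a-b$ choices per vertex of $\partial A$ (strictly fewer than the good-range size, because the vertex has a bad neighbour). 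The ratio $s/t$ then combines with $|\partial A|\ge h|A|$ and the $d^{4n}$ enumeration of distance-two connected sets. If you want to keep a shift-and-encode narrative, you would need to precisely define the shift amount per level set, verify the Lipschitz constraint vertex by vertex, and carefully count the encoding; as written the claim that the shifts ``preserve the $M$-Lipschitz constraint'' is not justified and appears false in simple cases.

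One smaller point: the paper's quantitative tail is on the \emph{distance-two} connected component of the bad set containing $v$, and the $d^4$ in the hypothesis comes from enumerating such components; your proposal mentions $d^4$ in the encoding but does not tie it to the $d^{4n}$ bound on distance-two connected sets containing a fixed vertex, which is where it actually enters.
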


    The limiting measure $\nu_{G,M}^{a,b}$ is necessarily independent of the chosen exhaustion (as one can interleave two exhaustions and then apply the result). Observe that the pushforward of $\nu_{U,M}^{a,b}$ by a parity-preserving automorphism $\varphi$ of $G$ is $\nu_{\varphi(U),M}^{a,b}$. It follows that the limiting measure $\nu_{G,M}^{a,b}$ is invariant to parity-preserving automorphisms of $G$.
   Similarly, the pushforward of $\nu_{G,M}^{a,b}$ by a parity-reversing automorphism is $\nu_{G,M}^{b-M,a+M}$. Thus, $\nu_{G,M}^{a,b}$ is invariant to all automorphisms if and only if $b-a=M$. Consequently, when $b-a \neq M$, the measure $\nu_{G,M}^{a,b}$ behaves differently on even and odd vertices. In fact, the height on any even vertex is close to uniform on $\{a,\dots,b\}$ and the height on any odd vertex is close to uniform on $\{b-M,\dots,a+M\}$ (see \cref{prop:cluster_exp_tail}). Thus, \cref{thm:main_general} gives a separate proof of the non-convergence result in \cref{thm:main_lipschitz} when $d$ is large enough.
   Let us also remark that in the case when $b-a=M$, the result stated in the theorem makes sense and holds true even without the assumption that the graph $G$ is bipartite (in this case $\nu^{a,b}_{U,M}$ is the uniform measure on $M$-Lipschitz functions constrained to taking values in $\{a,\dots,b\}$ on the entire boundary).

\paragraph{Background:}
Studying Gibbs measures of statistical physics models on trees has a long history. We refer to the book of Rozikov \cite{rozikov_book} for a comprehensive survey of the results about splitting Gibbs measures on regular trees for models with countable and even uncountable spin values. It is interesting to note
that the results in \cite{rozikov_book} about splitting Gibbs measure for countable spins (see \cite[Theorem 8.1]{rozikov_book}) lead to a unique Gibbs measure. This is clearly not the case for us, as \Cref{thm:main_lipschitz} (see also \cref{thm:main_conv2}) already leads to multiple solutions, and it is plausible that there are many more (see \Cref{sec:open}). 

The random homomorphism model is very similar to the uniform Lipschitz model, except that the height difference between adjacent vertices must be in $\{\pm 1\}$ (this makes sense only on bipartite graphs).
Both the uniform Lipschitz model and the random homomorphism model on trees have been studied in \cite{PWA_tree,BHM_00} and on finite expanders in \cite{PWY_expander}. A recent paper by Lammers and Toninelli~\cite{LT_24} exploits log concavity to obtain localization for homomorphisms with very general boundary conditions. 
See also a recent result by Krueger, Li and Park \cite{KLP_24} which significantly improves the  results in \cite{PWY_expander}. See \cite{KKR_widom} for a result about the Widom--Rowlinson model on trees, which is a model of similar flavour to ours (it can be thought of as a restricted Lipschitz model where the function cannot take values other than $\{-1,0,1\}$). See Peled and Spinka \cite{peled2017high,PS_rigidity} for analogous results in the hypercubic lattice.
See also \cite{BS_00,BYY_07,BP_tree_index} for related results.

As mentioned, the fact that random Lipschitz functions under zero boundary conditions are localized was first proved by Peled, Samotij and Yehudayoff~\cite{PWA_tree}. In fact, they showed that the height at the root has doubly exponential tails for all $M \ge 1$ and $d \ge 2$.
They also point out\footnote{This was announced in~\cite{PWA_tree}, but was never published. \cref{thm:main_general} is our attempt to write down a precisely formulated general result and give the details of the proof.} that when $M \le cd/\log d$ for some universal constant $c>0$, more is true --- for even $n$, the height at the root is exponentially concentrated on the single value $0$, whereas for odd $n$, the height is at the root is roughly uniformly distributed on $\{-M,\dots,M\}$. They go on to explain that they expect that such a strong concentration fails when $M\gg d$. Specifically, they raised their suspicion that the height at the root converges in distribution as $n$ tends to infinity (so that there is asymptotically no distinction between even and odd tree heights). While we do not establish that such a transition phenomenon occurs between the cases $M\ll d$ and $M \gg d$ for all $M$, we do show in \cref{thm:main_lipschitz} that such a transition occurs as $d$ is varied when $M=1$. Extending this to all $M$ is an interesting problem.

\paragraph{Proof outline:}
Let us briefly outline the ideas behind the proofs. \Cref{thm:main_lipschitz} is the major result in this article and takes up most of the analysis. There are two separate parts, convergence for $2 \le d \le 7$ and non-convergence for $d \ge 8$, the latter being much simpler. The basic idea for the convergence part is a contraction argument: we think of the probability distribution at the root as an element of $\ell^1(\Z)$ and then we apply iterates of an appropriate operator $F$. The goal then reduces to proving that the iterates of $F$ (applied to $\delta_0$) converge in an appropriate norm. The proof of this would have been somewhat straightforward if the map $F$ turned out to be a contraction (but it isn't). Nevertheless, we show that after a number of iterations, we get into a certain ``basin of attraction'', on which the map becomes a contraction. To show the latter, we prove that the operator norm of the derivative of $F$ is strictly less than 1 in this basin. After this, the proof concludes by a simple application of the Banach fixed point theorem.

In practice, directly applying this idea to $F$ soon becomes intractable. The way we work our way through is to look at ratios of the consecutive values, which gets rid of the normalizing factor in a good way. For the non-convergence part of the theorem, we show that the iterates oscillate between two separated `basins', ruling out the possibility of convergence (though we still expect convergence of $\mu^0_{n,d}$ along even $n$). The details of these ideas can be found in \Cref{sec:conv_non_conv}.

\Cref{thm:main_lipschitz_FKG} follows from a certain ``FKG for absolute value'' result when the height function is shifted by $1/2$. Finally \Cref{thm:main_general} is a consequence of a variation of a Peierl's type argument following that in \cite{PWA_tree,PWY_expander}.

\paragraph{Organization:} In \Cref{sec:conv_non_conv}, we describe the setup and the ratio transformation which simplifies the operator $F$. We also describe heuristically which observables from this operator drives the transition from convergence to non-convergence.  In \Cref{sec:envelope} we define the set which is the `basin of attraction' and describe our procedure to get into this set. In \Cref{sec:nonconvergence} we prove the non convergence part of \Cref{thm:main_lipschitz}. In \Cref{sec:convergence,sec:apriori_bounds,sec:contraction} we prove the convergence part of \Cref{thm:main_lipschitz} modulo some technical estimates, which we push to \Cref{sec:phi_estimate}. We prove \Cref{thm:main_lipschitz_FKG} in \Cref{sec:FKG} and \Cref{thm:main_general} in \Cref{sec:long_range}. We list some open questions in \Cref{sec:open}.
\paragraph{Acknowledgements:} We thank Omer Angel, Jinyoung Park and Ron Peled for several illuminating discussions. KK would like to thank the Salmon Coast Field Station for hosting him while a portion of this work was completed. 


\bigskip

 

\section{Convergence and non-convergence under 0 boundary conditions}\label{sec:conv_non_conv}

In this section we prove \cref{thm:main_lipschitz}. The proof is separated into the convergence result for $2 \le d \le 7$ (\cref{sec:convergence}) and the non-convergence result for $d \ge 8$ (\cref{sec:nonconvergence}).
The proofs are based on a recursive approach. We now lay the groundwork for this.

For a probability distribution $z=(z_i)_{i \in \Z}$, define
\[ A(z)_i := \left(z_{i-1}+z_i+z_{i+1}\right)^d , \qquad F(z) := \frac{A(z)}{\sum_{i \in \Z} A(z)_i}. \]
The following claim is immediate from the definition.
\begin{claim}\label{lem:F}
Let $f_n \sim \mu_{n,d}^0$ and let $\rho$ be the root vertex. Then for any $k \in \Z$,
$$
\P(f_n(\rho) = k) = (F^{(n)}(\delta_0))_{k} .
$$
\end{claim}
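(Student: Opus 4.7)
The plan is to set this up as a standard tree recursion and then verify that the recursion for the counts, after normalization, matches the definition of $F$. I would prove the claim by induction on $n$, with the base case $n=0$ being immediate: $\T_d^0$ is a single vertex (the root, which is also the only leaf), so under $\mu^0_{0,d}$ the root takes value $0$ deterministically, matching $(F^{(0)}(\delta_0))_k = (\delta_0)_k$.

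For the inductive step, I would first count rather than work with probabilities. Let $N_n(k)$ denote the number of functions in $\cL^0_{n,d}$ with $f(\rho)=k$, and write $Z_n = \sum_k N_n(k)$ so that $\P(f_n(\rho)=k) = N_n(k)/Z_n$. The key structural observation is that removing the root from $\T_d^n$ decomposes the graph into $d$ disjoint subtrees, each rooted at a child of $\rho$ and each isomorphic to $\T_d^{n-1}$ (here I use that the root of $\T_d$ has degree exactly $d$, so each child loses one edge to $\rho$ and is left with $d$ downward neighbours). Conditioning on $f(\rho)=k$, a function on $\T_d^n$ with $0$ boundary restricts independently to a Lipschitz function with $0$ boundary on each of the $d$ subtrees, subject to the single constraint that each child takes a value in $\{k-1,k,k+1\}$. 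This yields the recursion
\[
N_n(k) \;=\; \bigl(N_{n-1}(k-1)+N_{n-1}(k)+N_{n-1}(k+1)\bigr)^d.
\]

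To convert this into the claimed identity, set $p^{(n)}_k := \P(f_n(\rho)=k) = N_n(k)/Z_n$ and substitute $N_{n-1}(j) = Z_{n-1}\,p^{(n-1)}_j$ into the recursion. A factor of $Z_{n-1}^d$ pulls out of every entry, so it cancels when normalizing by $Z_n = \sum_k N_n(k)$, and one obtains
\[
p^{(n)}_k \;=\; \frac{\bigl(p^{(n-1)}_{k-1}+p^{(n-1)}_k+p^{(n-1)}_{k+1}\bigr)^d}{\sum_{i \in \Z}\bigl(p^{(n-1)}_{i-1}+p^{(n-1)}_i+p^{(n-1)}_{i+1}\bigr)^d} \;=\; F(p^{(n-1)})_k,
\]
exactly matching the definition of $F$ applied to $p^{(n-1)}$. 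Combined with the inductive hypothesis $p^{(n-1)} = F^{(n-1)}(\delta_0)$, this gives $p^{(n)} = F^{(n)}(\delta_0)$ and closes the induction.

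There isn't really a hard step here; the content is purely the independence-of-subtrees decomposition at the root. The only things to be slightly careful about are (i) correctly identifying the subtree rooted at a child of $\rho$ as $\T_d^{n-1}$ rather than as a $(d-1)$-ary or $(d+1)$-regular object, and (ii) noting that the sums appearing in $A(z)$ and in $Z_n = \sum_k N_n(k)$ are finite since $N_n(k)=0$ for $|k|>n$, so no convergence issues arise and $F(p^{(n-1)})$ is well defined.
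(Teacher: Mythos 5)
Your proof is correct and is exactly the argument the paper has in mind: the paper simply asserts that the claim is ``immediate from the definition,'' and your induction via the subtree decomposition at the root, counting extensions with the recursion $N_n(k)=\bigl(N_{n-1}(k-1)+N_{n-1}(k)+N_{n-1}(k+1)\bigr)^d$ and observing that the $Z_{n-1}^d$ factor cancels under normalization, is the standard way to spell this out. Your side remarks (that each child's subtree is $\T_d^{n-1}$ because the child loses one edge to $\rho$, and that finiteness of the sums follows from $N_n(k)=0$ for $|k|>n$) are the right points to be careful about and are handled correctly.
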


Thus, vague convergence of the distribution of $f_n(\rho)$ is determined by the pointwise convergence of $F^{(n)}(\delta_0)$ (tightness is a separate issue which needs to be addressed). In order to understand the latter, our strategy is to perform a change of coordinates which gets rid of the normalizing factor in~$F$, and thereby makes things more amenable to analysis. Define a map ${\mathsf R} \colon [0,\infty)^\N \to [0,\infty)^\N$ by
\begin{equation}\label{eq:phi}
    {\mathsf R}(z)_i = 
    \begin{cases}
        \frac{z_i}{z_{i-1}} &\text{if } z_{i-1} \neq 0\\
        0 &\text{if }z_{i-1}=0.
    \end{cases}
\end{equation}
In order for ${\mathsf R}$ to be injective, we restrict its domain to the set $\cE$ of symmetric probability distributions whose support is an interval or all of $\Z$, i.e., probability distributions $z$ such that $z_i=z_{-i}$ for all $i>0$, and $z_i=0$ implies that $z_{i+1}=1$ for $i>0$. Let $\cR$ be the image of $\cE$ under ${\mathsf R}$. Observe that $\varphi$ is indeed injective and hence invertible.
Define a new map $\psi:\cR \to \cR$ by 
$$
\psi := {\mathsf R} \circ F \circ {\mathsf R}^{-1}.
$$
Observe that $F$ preserves $\cE$, i.e., $\varphi(z) \in \cE$ if $z \in \cE$.
Hence, $\psi$ preserves $\cR$.
Thus,
\[ \psi^{(n)} = {\mathsf R} \circ F^{(n)} \circ {\mathsf R}^{-1} \qquad\text{and}\qquad F^{(n)} = {\mathsf R}^{-1} \circ \psi^{(n)} \circ {\mathsf R} .\]
Noting that ${\mathsf R}(\delta_0)=\zero:=(0,0,\dots)$, and that ${\mathsf R}^{-1}$ is continuous (with respect to the pointwise topologies), we see that in order to establish the pointwise convergence of $F^{(n)}(\delta_0)$, it is sufficient to show the pointwise convergence of $\psi^{(n)}(\zero)$. The converse is also true so that pointwise convergence of these sequences is equivalent, since ${\mathsf R}$ is continuous on the subset of $\cE$ of fully supported distributions (where the limit must clearly reside), but we will not use this.

Let us record already here the expressions for the coordinates of $\psi$.
Writing $\psi_n(\cdot)=\psi(\cdot)_n$, we have by writing ${\mathsf R}^{-1}(x) = z$,
\begin{equation}\label{eq:psi_1}
\psi_1(x) = \frac{(z_0+z_1+z_2)^d}{(z_{-1}+z_0+z_1)^d} = \left(\frac{1+x_1+x_1x_2}{1+2x_1}\right)^d ,
\end{equation}
where we used the fact that $z_1=z_{-1}$ since $z \in \cE$.
Similarly, for $n \ge 2$, we get
\begin{equation}\label{eq:psi_n}
\psi_n(x) = x_{n-1}^d \left(\frac{1+x_n+x_nx_{n+1}}{1+x_{n-1}+x_{n-1}x_n}\right)^d .
\end{equation}

Let us now provide a rough explanation of what properties of these functions change from the convergence phase ($d \le 7$)  to the non-convergence phase ($d \ge 8$). 
For $1\le \alpha <  2$ and $0 \le x \le 1$, define  
\begin{align}
    f(\alpha,x) &:= \left(\frac{1+\alpha x}{1+2x}\right)^d. \label{eq:f}
\end{align}
Letting $(1+x_2)= \alpha$, we see that $\psi_1$ is the same as the map $f(\alpha, x)$.
It is known \cite{PWA_tree} that the uniform Lipschitz function at the root has double exponential decay, which means that $x_2$ is small. Thus it is reasonable to look at $f(1,x)$ as a test case. Although $f(1,x)$ is decreasing and convex, and has a single fixed point $x_*$, $|f'(1, x_*)|>1$ if and only if $d \ge 8$. Furthermore, it turns out that $f \circ f$ starts having multiple fixed point for $d \ge 8$. Also, $|(f \circ f)'|<1$ for all $0 \le x \le 1$ if $2 \le d \le 7$, for $d \ge 8$, $(f \circ f)'(x_*) >1$. These facts can be proved directly, we show a few plots here in \Cref{Plots}.
\begin{figure}[h]
    \centering
    \includegraphics[width=0.25\linewidth]{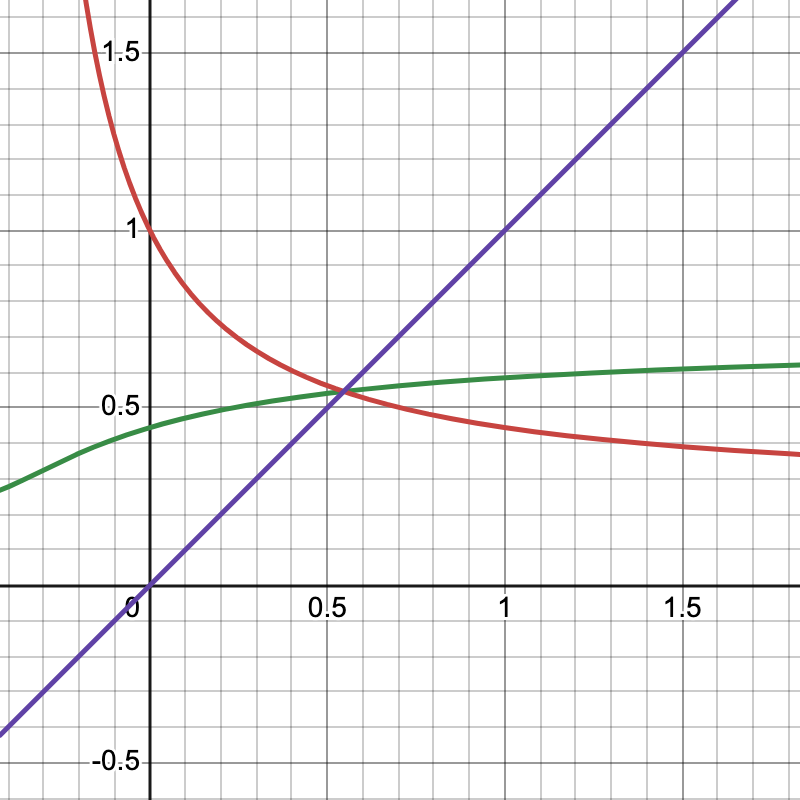}\qquad\qquad
        \includegraphics[width=0.25\linewidth]{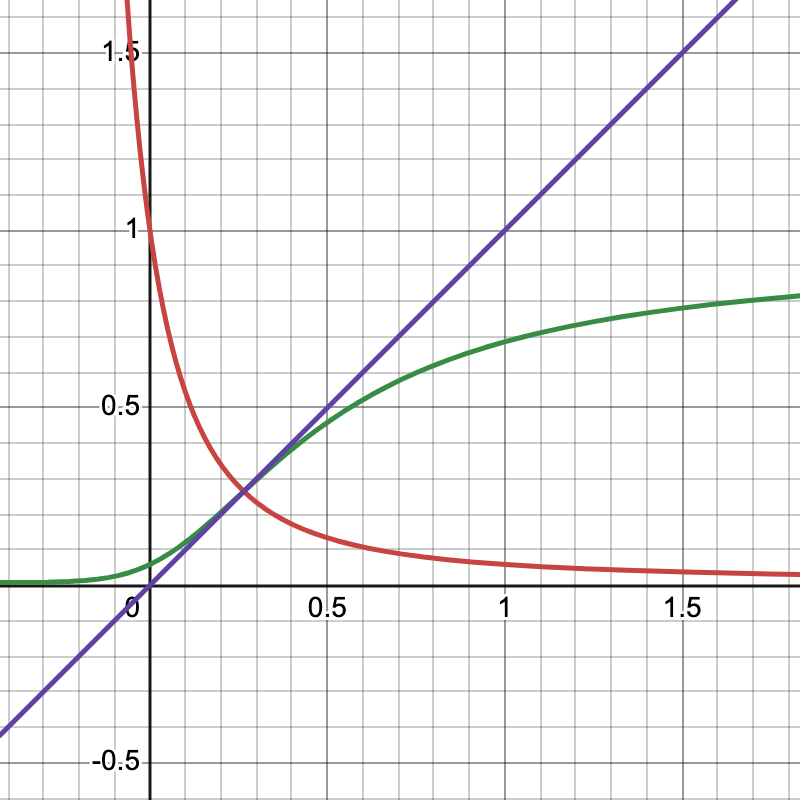}\\
        \vspace{1 cm}
        \includegraphics[width=0.25\linewidth]{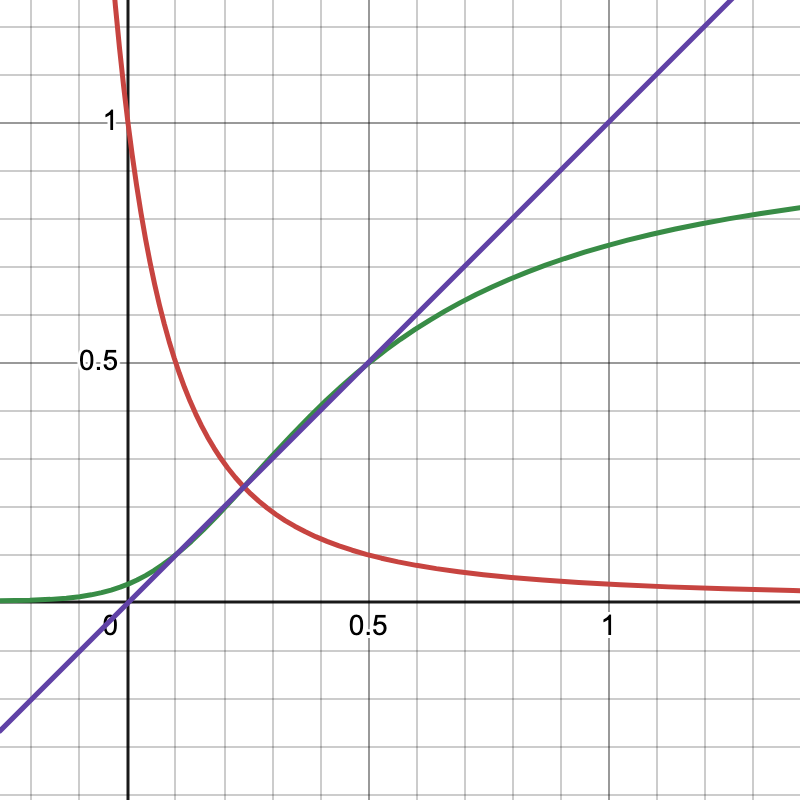}\qquad \qquad
        \includegraphics[width=0.25\linewidth]{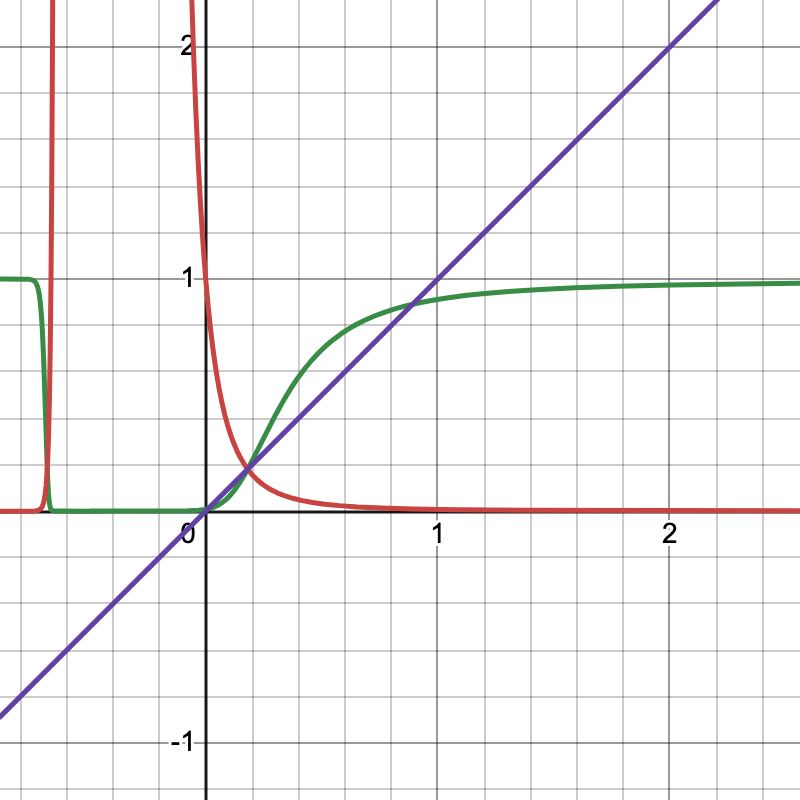}
    \caption{Plots of $f=f(1,x)$ (in red), $f \circ f$ (in green) and $y=x$ (in blue) for different values of $d$, top left: $d=2$, top right $d=7$, bottom left: $d=8$, bottom right $d=12$. Multiple fixed points of $f \circ f $ start appearing for $d \ge 8$.}
    \label{Plots}
\end{figure}
In reality however, $x_2$ is not constant but fluctuates. Therefore, to prove convergence, we need to look at the full operator norm.

The rest of this section is organized as follows.
We first construct an ``envelope dynamics'' in \cref{sec:envelope} which gives a decent approximation of the full dynamics of $\psi$. We then use this to prove non-convergence for $d \ge 8$ in \cref{sec:nonconvergence}. Finally, we continue on to the longer and more intricate proof of convergence for $2 \le d \le 7$ in \cref{sec:convergence}, for which the envelope dynamics only serves as an initial step to obtain an approximation of the fixed point.

\subsection{Envelope dynamics}\label{sec:envelope}

We now describe subsets of $\cR$ which capture much of the essence of the dynamics given by $\psi$. Because of the fast decay, we manage to describe these sets via three parameters, which give an interval for the range of the first coordinate and a maximum for the rest of the coordinates.
For numbers $0 \le a,c \le b \le 1$, denote
\begin{align*}
 \cS_{a,b,c} &:= \Big\{ x \in \cR : a \le x_1 \le b,~ 0 \le x_n \le c \text{ for all } n \ge 2 \Big\} .
\end{align*}
Define $\varphi \colon [0,1]^3 \to [0,1]^3$ by
\begin{equation}
    \varphi(a,b,c) := \left( \left(\frac{1+b}{1+2b}\right)^d, \left(\frac{1+a+ac}{1+2a}\right)^d, b^d \left(\frac{1+c+c^2}{1+b+bc}\right)^d \right).
\end{equation}
The following lemma describes the evolution of the sets $\cS_{a,b,c}$ under applications of $\psi$.
Denote
\[ \cI := \Big\{ (a,b,c) \in [0,1]^3 : a,c \le b \Big\} .\]

\begin{lemma}\label{lem:S-abc-iteration}
    Let $(a,b,c) \in \cI$. Then $(a',b',c'):=\varphi(a,b,c) \in \cI$ and $\psi(\cS_{a,b,c}) \subset \cS_{a',b',c'}$.
\end{lemma}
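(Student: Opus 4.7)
The plan is to prove both assertions by direct monotonicity and algebra applied to the explicit formulas \eqref{eq:psi_1} and \eqref{eq:psi_n}. I will treat the coordinate $\psi_1$ separately, handle $\psi_n$ for $n\ge 2$ uniformly, and then verify the two inequalities that define $\cI$. Throughout, the correctness of the bounds will rely on the fact that the map $\varphi$ is literally encoding the maximum/minimum values of the relevant rational functions over the box $[a,b] \times [0,c]^2$.

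For $\psi_1$, I write its base as $(1+x_1(1+x_2))/(1+2x_1)$. A one-line derivative check shows this is non-increasing in $x_1$ (the derivative in $x_1$ has sign $x_2-1 \le 0$ since $x_2 \le c \le 1$) and trivially non-decreasing in $x_2$. Over $(x_1,x_2) \in [a,b]\times[0,c]$ the minimum is therefore at $(b,0)$ and the maximum at $(a,c)$, matching exactly the first two coordinates of $\varphi(a,b,c)$, so $a' \le \psi_1(x) \le b'$.

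For the remaining coordinates I plan to unify the analysis by defining $g(u,v,w) := u(1+v+vw)/(1+u+uv)$, so that $\psi_n(x) = g(x_{n-1},x_n,x_{n+1})^d$ for all $n \ge 2$. The key identity
\[ g(u,v,w) = 1 - \frac{1-uvw}{1+u+uv} \]
makes three short quotient-rule computations yield that $g$ is non-decreasing in each of $u,v,w$ separately on $[0,\infty)^3$. For $n=2$ the relevant box is $[a,b]\times[0,c]\times[0,c]$, so the maximum is at $(b,c,c)$ and equals $b(1+c+c^2)/(1+b+bc)$, i.e.\ exactly $(c')^{1/d}$. For $n \ge 3$ the box is $[0,c]^3$, the maximum is $g(c,c,c)=c$, and the comparison $c^d \le c'$ reduces to $c(1+b+bc)\le b(1+c+c^2)$, which simplifies to $c \le b$ and is given.

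Finally, to show $(a',b',c')\in\cI$ I must check $a'\le b'$ and $c'\le b'$. The first, after clearing denominators, becomes $(1+b)(1+2a) \le (1+a+ac)(1+2b)$, which rearranges to $(b-a)+ac(1+2b)\ge 0$ and follows from $a\le b$. The only mildly nontrivial algebraic step is $c'\le b'$: after cross-multiplying it reads $b(1+c+c^2)(1+2a) \le (1+a+ac)(1+b+bc)$, and I expect to expand both sides and reorganise the difference $\mathrm{RHS}-\mathrm{LHS}$ as $(1-bc^2)+a(1-b)+ac(1-bc)$, whose three summands are each $\ge 0$ since $b,c\in[0,1]$. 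I anticipate this last arithmetic rearrangement being the main (still routine) obstacle; all other steps reduce to carefully matching corner values of explicit rational functions with the definition of $\varphi$.
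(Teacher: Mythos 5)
Your proof is correct and takes essentially the same approach as the paper: coordinate-wise monotonicity of the rational functions underlying $\psi_1$ and $\psi_n$, then evaluation at the appropriate corner of the box $[a,b]\times[0,c]^2$. The unified $g(u,v,w)=1-\tfrac{1-uvw}{1+u+uv}$ is a tidy packaging (the paper instead bounds the three variables one at a time for all $n\ge 2$ via $x_{n-1}\le b$, and verifies $\varphi(\cI)\subset\cI$ through the intermediate comparison $g(b,c)\le g(1,c)\le f(1+c,a)$ rather than your direct factoring), but the substance is the same.
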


Before proving \Cref{lem:S-abc-iteration}, it is useful to make some observations. For $0 \le b,x \le 1$, define
\begin{align}
    g(b,x) &:= b^d \left(\frac{1+x+x^2}{1+b+bx}\right)^d. \label{eq:g}
\end{align}
Note that with these definitions we have
\[ \varphi(a,b,c) := (f(1,b), f(1+c,a), g(b,c)) .\]
The following are simple observations. For $x, \alpha,\beta,b>0$ 
\begin{align}
    &\frac{1+\alpha x}{1+\beta x} \text{ is decreasing (resp. increasing) in $x$ if $\alpha<\beta$ (resp. $\alpha>\beta$)}.\label{eq:f'}\\
& g(b,x) \text{ is increasing in $b$}.\label{eq:g'}
\end{align}
To verify \eqref{eq:g'} simply  divide the the numerator and the denominator by $b^d$ and observe that after cancellations, the resulting expression is increasing in $b$.
Before proving the main part of \cref{lem:S-abc-iteration}, let us show that $\varphi$ preserves $\cI$.
\begin{claim}
    $\varphi$ preserves $\cI$, i.e., $f(\cI) \subset \cI$.
\end{claim}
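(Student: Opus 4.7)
The plan is to verify the three defining conditions of $\cI$ — namely $a',b',c' \in [0,1]$, $a' \le b'$, and $c' \le b'$ — for $(a',b',c') := \varphi(a,b,c)$ whenever $(a,b,c) \in \cI$.

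First I would dispense with membership in $[0,1]$. Nonnegativity is obvious. Since $b \ge 0$, $\frac{1+b}{1+2b} \le 1$, so $a' \le 1$. Since $c \le 1$, $1+a+ac \le 1+2a$, so $b' \le 1$. Since $b,c \le 1$, $bc^2 \le 1$, which rearranges to $b(1+c+c^2) \le 1+b+bc$, giving $c' \le 1$.

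Next, the inequality $a' \le b'$ follows by chaining two monotonicities from \eqref{eq:f'}. The map $x \mapsto f(1,x)$ is decreasing (the case $\alpha=1<2=\beta$), so $a' = f(1,b) \le f(1,a)$ since $a \le b$. The map $\alpha \mapsto f(\alpha,a)$ is increasing (for fixed $a>0$ by \eqref{eq:f'} applied with the roles of the constants swapped, or just by inspection of $f$), so $f(1,a) \le f(1+c,a) = b'$, and we conclude $a' \le b'$.

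The remaining inequality $c' \le b'$ is the main step. Taking $d$-th roots, it reads
\[ \frac{b(1+c+c^2)}{1+b+bc} \le \frac{1+a+ac}{1+2a} ,\]
and after clearing the (positive) denominators, expanding both sides, and cancelling, the inequality boils down to
\[ 1 + a(1-b) + ac(1-bc) - bc^2 \ge 0 .\]
The hypothesis $a,c \le b \le 1$ now does all the work: $a(1-b) \ge 0$, $ac(1-bc) \ge 0$, and $bc^2 \le 1$, so the expression is at least $1 - bc^2 \ge 0$. The only delicate point is the algebraic bookkeeping in the expansion; choosing the grouping of terms above is what converts the cross-multiplied inequality into a transparent application of the hypothesis, and I expect this to be the bulk of the short proof.
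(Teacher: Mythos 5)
Your proof is correct and follows essentially the same path as the paper's: membership in $[0,1]$ is routine, $a' \le b'$ is the same two-step monotonicity chain, and $c' \le b'$ reduces to a short polynomial inequality. The one small difference is that the paper first invokes \eqref{eq:g'} to bound $g(b,c) \le g(1,c)$ and then checks $g(1,c) \le f(1+c,a)$ (which simplifies to $c^2(1+a) \le 1+ac$), whereas you clear denominators in $g(b,c) \le f(1+c,a)$ directly, keeping $b$ in the picture and arriving at the equivalent $1 + a(1-b) + ac(1-bc) - bc^2 \ge 0$; both are trivial under the hypotheses, so this is a cosmetic rerouting rather than a new idea.
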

\begin{proof}
    Let $(a,b,c) \in \cI$ and set $(a',b',c') := \varphi(a,b,c)$.
    We need to check that $a' \le b'$ and $c' \le b'$.
    For the first, we note $f(1,b) \le f(1,a) $ using \eqref{eq:f'} and clearly $f(1,a) \le f(1+c,a)$
    For the second, we need to check that $g(b,c) \le f(1+c,a)$.
    First note that $g(b,c) \le g(1,c)$ by \eqref{eq:g'}. Next, to see that $g(1,c) \le f(1+c,a)$, we need to check that $(1+c+c^2)(1+2a) \le (2+c)(1+a+ac)$. Equivalently,
     \[ 1+c+c^2+2a+2ac+2ac^2 \le 2+2a+3ac+c+ac^2 ,\]
     which after cancellations becomes $c^2(1+a) \le 1+ac$, which is trivially true since $c \le 1$.  
\end{proof}

\begin{proof}[Proof of \cref{lem:S-abc-iteration}]
    Recalling~\eqref{eq:psi_1}, we have
    \[ \psi_1(x) = \left(\frac{1+x_1+x_1x_2}{1+2x_1}\right)^d \le \left(\frac{1+x_1+cx_1}{1+2x_1}\right)^d \le \left(\frac{1+a+ac}{1+2a}\right)^d .\]
where the second inequality follows from \eqref{eq:f'}.
    Similarly
    \[ \psi_1(x) = \left(\frac{1+x_1+x_1x_2}{1+2x_1}\right)^d \ge \left(\frac{1+x_1}{1+2x_1}\right)^d \ge \left(\frac{1+b}{1+2b}\right)^d .\]
    
    Fix $n \ge 2$. Recall~\eqref{eq:psi_n}. Since $\psi_n(x)$ is clearly increasing in $x_{n+1}$, we have
    \[ \psi_n(x) \le x_{n-1}^d \left(\frac{1+x_n+cx_n}{1+x_{n-1}+x_{n-1}x_n}\right)^d .\]
    Since the latter expression is increasing in $x_{n-1}$ (this can be seen easily by dividing both the numerator and denominator by $x_{n-1}^2$) and $b \ge c$  so that $x_{n-1} \le b$, we have
    \[ \psi_n(x) \le b^d \left(\frac{1+x_n+cx_n}{1+b+bx_n}\right)^d .\]
    Since this expression is increasing in $x_n$ (follows from \eqref{eq:f'}), we have
    \[ \psi_n(x) \le b^d \left(\frac{1+c+c^2}{1+b+bc}\right)^d .\]
    thereby completing the proof.
\end{proof}

\subsection{Non-convergence for $d \ge 8$}\label{sec:nonconvergence}

\begin{prop}\label{prop:apriori_3}
    Fix $d \ge 8$, $a = .4, c = .01, a' = (.78)^d$. Then
    \[ \psi \circ \psi(\cS_{a,1,c}) \subset \cS_{a,1,c} \qquad\text{and}\qquad \psi(\cS_{a,1,c}) \cap \cS_{a',1,1} = \emptyset .\]
\end{prop}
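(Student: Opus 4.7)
\emph{Proof plan.} The plan is to apply the envelope lemma (\cref{lem:S-abc-iteration}) twice. A direct computation gives
\[
\varphi(0.4,1,0.01)=\bigl(f(1,1),\,f(1.01,0.4),\,g(1,0.01)\bigr)=\bigl((2/3)^d,\,(0.78)^d,\,(1.0101/2.01)^d\bigr),
\]
where the key algebraic identity is $1.404/1.8=0.78$. Setting $(a_1,b_1,c_1)$ equal to this triple, \cref{lem:S-abc-iteration} gives $\psi(\cS_{a,1,c})\subset\cS_{a_1,b_1,c_1}$. Since $b_1=a'$, every element of $\psi(\cS_{a,1,c})$ has first coordinate at most $a'$, whereas every element of $\cS_{a',1,1}$ has first coordinate at least $a'$; this yields the second assertion of the proposition.

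For the first assertion I would apply \cref{lem:S-abc-iteration} a second time to obtain $\psi^{(2)}(\cS_{a,1,c})\subset\cS_{a_2,b_2,c_2}$, where $(a_2,b_2,c_2)=\varphi(a_1,b_1,c_1)$ is given by
\[
a_2=\left(\tfrac{1+b_1}{1+2b_1}\right)^{\!d},\qquad b_2=\left(\tfrac{1+a_1+a_1c_1}{1+2a_1}\right)^{\!d},\qquad c_2=b_1^d\left(\tfrac{1+c_1+c_1^2}{1+b_1+b_1c_1}\right)^{\!d}.
\]
It then suffices to establish the three inequalities $a_2\ge 0.4$, $b_2\le 1$, and $c_2\le 0.01$ for every $d\ge 8$.

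The bound $b_2\le 1$ is immediate since $c_1\le 1$ gives $1+a_1+a_1c_1\le 1+2a_1$. The bound $c_2\le 0.01$ is also easy: the prefactor $b_1^d=(0.78)^{d^2}$ is doubly exponentially small in $d$ (already below $10^{-6}$ at $d=8$), and the remaining fractional factor is bounded by a mild constant, so $c_2$ is tiny.

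The main obstacle is the tight bound $a_2\ge 0.4$, which is essentially saturated at the threshold $d=8$: there $b_1=(0.78)^8\approx 0.137$ yields $a_2\approx (0.8925)^8\approx 0.403$, barely above $0.4$. My plan is to verify $a_2(8)\ge 0.4$ by a careful numerical computation, and then show that $a_2(d)$ is monotone increasing in $d$ for $d\ge 8$. The monotonicity follows by differentiating $\log a_2(d)=d\log\bigl((1+b_1)/(1+2b_1)\bigr)$ with $b_1=(0.78)^d$; a short computation yields
\[
\tfrac{d}{dd}\log a_2(d)=\log\!\left(\tfrac{1+b_1}{1+2b_1}\right)+\tfrac{d\,b_1\log(1/0.78)}{(1+b_1)(1+2b_1)},
\]
which for small $b_1$ is approximately $b_1(-1+0.248\,d)$ and hence positive well below $d=8$. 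The careful calibration of the constants $a=0.4$, $c=0.01$, $a'=(0.78)^d$ is precisely what makes the identity $f(1+c,a)=(0.78)^d$ hold and the tight inequality at $d=8$ survive simultaneously; I expect these delicate arithmetic verifications to be the only nontrivial step.
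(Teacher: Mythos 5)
Your proposal takes essentially the same route as the paper: apply \cref{lem:S-abc-iteration} twice, exploit the exact identity $f(1.01,0.4)=(0.78)^d=a'$ for the disjointness claim, and reduce the invariance claim to the three scalar inequalities $a_2\ge 0.4$, $b_2\le 1$, $c_2\le 0.01$. Your quantity $a_2(d)=\bigl(\tfrac{1+(0.78)^d}{1+2(0.78)^d}\bigr)^d$ is exactly the paper's $\gamma(d)$, and the bounds on $b_2$ and $c_2$ are the same observations the paper makes (the paper uses the cruder but sufficient bound $z_n\le c$ coming from $\psi(\cS_{0,1,c})\subset\cS_{0,1,c}$, whereas your $c_2$ from $\varphi$ is tighter but unnecessary).

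The one place the routes genuinely diverge is the verification that $\gamma(d)=a_2(d)\ge 0.4$ for all $d\ge 8$. You propose a single numerical check at $d=8$ combined with a proof that $d\mapsto a_2(d)$ is increasing for $d\ge 8$; the derivative formula you write is correct, but the positivity argument is left at the level of a small-$b_1$ heuristic and would need rigorous constants before the step is complete. The paper instead checks $d=8$ and $d=9$ by direct computation and, for $d\ge 10$, uses the elementary chain $\gamma(d)=\bigl(1+\tfrac{b_1}{1+b_1}\bigr)^{-d}\ge e^{-db_1}\ge e^{-10(0.78)^{10}}\approx 0.434$, using only that $x(0.78)^x$ is decreasing for $x>1/\log(1/0.78)$. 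Note that the exponential bound alone fails at $d=9$ (it gives $\approx 0.38$), which is why the paper handles $d=9$ separately; your monotonicity argument, if carried out rigorously, would treat all $d\ge 8$ uniformly and is arguably cleaner, at the cost of a more delicate inequality. One further minor point, shared with the paper's own proof: the bound $\psi(x)_1\le a'$ is not strict (equality occurs at $x=(0.4,0.01,0,\dots)$), so as literally stated the disjointness $\psi(\cS_{a,1,c})\cap\cS_{a',1,1}=\emptyset$ is not quite established; but only the non-strict inequality $\psi(x)_1\le a'$ is used downstream, so this does not affect the non-convergence theorem.
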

\begin{proof}
    Denote
$$
c' = \left(\frac{1+c+c^2}{2+c}\right)^d ,
$$
    and recall from \cref{lem:S-abc-iteration} that
    \[ \psi(\cS_{0,1,c}) \subset \cS_{0,1,c'} .\]
    Note that $c'$ is decreasing in $d$. Therefore, $c' \le (c')^{8/d}=.0040677<c$, and hence,
    \[ \psi(\cS_{0,1,c}) \subset \cS_{0,1,c} .\]
    
     Pick $x  \in \cS_{a,1,c}$ and let $y = \psi(x)$ and $z = \psi(y)$. Then $\psi(\cS_{a,1,c}) \subset \psi(\cS_{0,1,c}) \subset \cS_{0,1,c}$. Therefore, $y_n \le c$ and $z_n \le c$ for all $n \ge 2$. Now applying these bounds, using that $x \mapsto \frac{(1+\beta x)^d}{(1+2x)^d}$ is decreasing in $x$ if $\beta<2$  and $x_1 \ge a$,
     \[
         y_1 = \frac{(1+(1+x_2)x_1)^d}{(1+2x_1)^d}\le \frac{(1+1.01x_1)^d}{(1+2x_1)^d}\le \frac{(1+1.01a)^d}{(1+2a)^d} = (.78)^d = a'.
     \]
     Now observe that 
     \begin{equation}
         z_1 =\frac{(1+(1+y_2)y_1)^d}{(1+2y_1)^d} \ge \frac{(1+y_1)^d}{(1+2y_1)^d} \ge \frac{(1+(.78)^d)^d}{(1+2(.78)^d)^d} =: \gamma(d).
     \end{equation}
    For $d=8,9$, we compute $\gamma(d)$ directly and see that
     \[ \gamma(8)  = .402\cdots >a \qquad\text{and}\qquad \gamma(9) = .436\cdots > a .\]
     For $d \ge 10$, we use the bound 
     $$
     \gamma(d) = \left(1+\frac{(.78)^d}{1+(.78)^d}\right)^{-d} \ge \exp(-d(.78)^d) \ge \exp(-10(.78)^{10}) = 0.434\cdots > a,
     $$
     where we used that $x(.78)^x$ is decreasing for $x > 1/\log(1/.78) \approx 4.025$.     
\end{proof}

\begin{thm}
Fix $d \ge 8$.
    The sequence $\psi^{(k)}(\zero)$ does not converge pointwise.
\end{thm}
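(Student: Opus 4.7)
The plan is to exploit the two estimates in \cref{prop:apriori_3} to show that the first coordinate of $\psi^{(k)}(\zero)$ bounces between two disjoint intervals, and hence cannot converge.

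I would begin by computing $\psi(\zero)$ explicitly. Applying \eqref{eq:psi_1} and \eqref{eq:psi_n} directly with every coordinate of $\zero$ equal to $0$ gives $\psi_1(\zero) = 1$ and $\psi_n(\zero) = 0$ for all $n \ge 2$, so $\psi(\zero) = (1, 0, 0, \dots)$. Taking $a = 0.4$ and $c = 0.01$ as in \cref{prop:apriori_3}, this trivially satisfies $\psi(\zero) \in \cS_{a,1,c}$.

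Iterating the first inclusion of \cref{prop:apriori_3}, namely $\psi^{(2)}(\cS_{a,1,c}) \subset \cS_{a,1,c}$, I would conclude by induction on $k \ge 0$ that $\psi^{(2k+1)}(\zero) \in \cS_{a,1,c}$. In particular, $[\psi^{(2k+1)}(\zero)]_1 \ge a = 0.4$ for every $k \ge 0$. For the even iterates, applying $\psi$ to $\psi^{(2k+1)}(\zero)$ and using the bound $y_1 \le a'$ established in the proof of \cref{prop:apriori_3} (with $a' = (0.78)^d$), I obtain $[\psi^{(2k+2)}(\zero)]_1 \le (0.78)^d$ for every $k \ge 0$.

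Finally, since $d \ge 8$ gives $(0.78)^d \le (0.78)^8 < 0.4$, the odd-indexed values of the first coordinate lie in $[0.4, 1]$ while the even-indexed values (for $k \ge 2$) lie in $[0, (0.78)^d]$; these two intervals are disjoint. Hence the scalar sequence $([\psi^{(k)}(\zero)]_1)_{k \ge 1}$ does not converge, which immediately rules out pointwise convergence of $\psi^{(k)}(\zero)$. There is essentially no obstacle beyond this bookkeeping: the entire analytic content --- that $\cS_{a,1,c}$ is invariant under $\psi^{(2)}$ and is mapped by $\psi$ into the complement of $\cS_{a',1,1}$ --- has already been absorbed into \cref{prop:apriori_3}.
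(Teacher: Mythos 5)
Your proof is correct and follows essentially the same route as the paper's: starting from the explicit computation $\psi(\zero)=(1,0,0,\dots)\in\cS_{0.4,1,0.01}$, iterate \cref{prop:apriori_3} to show the first coordinate alternates between $[0.4,1]$ and $[0,(0.78)^d]$, which are disjoint when $d\ge 8$. You are in fact a bit more careful than the paper's one-line deduction, which swaps the parities: as you correctly note, it is the \emph{odd} iterates $\psi^{(2k+1)}(\zero)$ that lie in $\cS_{a,1,c}$ and have first coordinate $\ge 0.4$, while the \emph{even} iterates (from $k\ge 2$) have first coordinate $\le (0.78)^d$.
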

\begin{proof}
    This is immediate from the previous proposition as it implies that $\psi^{(2k)}(\zero)_1 \ge .4$ and $\psi^{(2k+1)}(\zero)_1 \le (0.78)^d \le 0.14$ for all $k \ge 0$.
\end{proof}

\subsection{Convergence for $2 \le d \le 7$}\label{sec:convergence}

We shall prove a generalization of the convergence result stated in \Cref{thm:main_lipschitz}.
We say a sequence of weights $w=(w_i) \in [0,\infty)^\Z$ is \textbf{good} if it is symmetric (i.e. $w(i) = w(-i)$ for all $i \in \Z$), $w(0)>0$ and there exists a $k \in \N \cup \{0\}$ and a $c\in [0,1)$ such that
\[ w(i) = w(i+1) \quad\text{for all }0 \le i<k,\qquad\text{and}\qquad w(i+1) \le c \cdot w(i) \quad\text{for all }i \ge k .\]
In other words, the distribution is flat in a symmetric interval around 0, and decays at least at an exponential rate $c$ outside this interval. Let $\mu_{n,d}^w$ be the probability distribution on Lipschitz functions on $T_{n,d}$ where each Lipschitz function $f$ is weighted by the product of $w(f(v))$ over the leaves $v$. Note that $\mu_{n,d}^{\{-k,\ldots, k\}}$ is a special case of such a measure. 

\begin{thm}\label{thm:main_conv2}
   Fix $2 \le d \le 7$. Then $\lim_{n \to \infty} \mu_{n,d}^w$ exists for all good weight sequences $w$ and is independent of~$w$. In particular, $\lim_{n \to \infty} \mu_{n,d}^{\{-k,\ldots, k\}}$ exists for all $k \ge 0$ and is independent of~$k$.
\end{thm}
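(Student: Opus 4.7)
The plan is to prove \cref{thm:main_conv2} by showing pointwise convergence of $\psi^{(n)}(x^{(0)})$ to a single limit $x^\infty \in \cR$ for every $x^{(0)}$ arising from a good weight sequence $w$, using the ratio reformulation ${\mathsf R}$ and the envelope dynamics of \cref{lem:S-abc-iteration}. The two-phase strategy is: first, use iterations of the coarse three-parameter map $\varphi$ to show the orbit of $\psi$ enters a pre-specified small ``basin of attraction'' $\cS^* \subset \cR$; then apply Banach's fixed point theorem via a contraction estimate for $\psi$ on $\cS^*$. Independence of $w$ will follow because every good orbit ends up in the same $\cS^*$, which contains a single fixed point.

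The first step is to extend \cref{lem:F} to weighted boundary conditions: if $f_n \sim \mu_{n,d}^w$ and $\tilde w$ denotes the normalization of $w$, then $\P(f_n(\rho)=k) = F^{(n)}(\tilde w)_k$. The definition of a good weight sequence guarantees $\tilde w \in \cE$, so $x^{(0)} := {\mathsf R}(\tilde w) \in \cR$ is well defined; the flat interval and exponential tail of $w$ translate into $x^{(0)} \in \cS_{a_0, 1, c_0}$ for explicit constants depending on $w$ (with $x^{(0)} = \mathbf{0} \in \cS_{0,0,0}$ in the zero-boundary case $w=\delta_0$). Next, I would iterate $\varphi$ from $(a_0, 1, c_0)$ on $\cI$ and show that after a finite number $N = N(w)$ of steps the envelope parameters lie in a fixed small region $\cS^* = \cS_{a^*, b^*, c^*}$ chosen in advance as the basin of attraction of $\psi$. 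The analysis here is guided by the one-dimensional heuristic recorded around~\eqref{eq:f}: for $2 \le d \le 7$, $f(1,\cdot)$ has a unique attracting fixed point $x_*$ with $|f'(1,x_*)|<1$, and $f \circ f$ admits no spurious fixed points (see \Cref{Plots}), in contrast with $d \ge 8$.

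The heart of the argument is the contraction estimate on $\cS^*$. I would establish that $\psi$ maps $\cS^*$ into itself and is a strict contraction there in a suitable (likely exponentially weighted) $\ell^\infty$ norm, provided the $(a^*, b^*, c^*)$ are close enough to the natural target values. The Jacobian $D\psi$ has a tridiagonal structure, since $\partial \psi_n/\partial x_m$ vanishes unless $m \in \{n-1, n, n+1\}$ by \eqref{eq:psi_1}--\eqref{eq:psi_n}, and near the envelope fixed point the dominant diagonal block reduces essentially to $f'(1, x_*)$, whose absolute value is less than $1$ precisely when $d \le 7$. A careful book-keeping of the tridiagonal entries (including the off-diagonal contributions coming from the factor $g(b,c)$) should yield a uniform bound $\|D\psi(x)\| \le \lambda < 1$ on $\cS^*$. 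Banach's fixed point theorem then produces a unique $x^\infty \in \cS^*$ with $\psi(x^\infty) = x^\infty$, and gives $\psi^{(n)}(x^{(0)}) \to x^\infty$ in the chosen norm, hence pointwise. Composing with the pointwise-continuous map ${\mathsf R}^{-1}$ yields $F^{(n)}(\tilde w) \to {\mathsf R}^{-1}(x^\infty)$ pointwise; tightness (from the envelope bounds themselves together with the doubly exponential tail estimate of~\cite{PWA_tree}) upgrades this to weak convergence of $\mu_{n,d}^w$, and independence from $w$ follows from uniqueness of $x^\infty$ in $\cS^*$.

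The main obstacle will be the contraction estimate on $\cS^*$. Because $\psi$ is an infinite-dimensional operator whose every coordinate depends on three neighbours, bounding $\|D\psi\|$ uniformly requires a judicious choice of weighted norm (to dampen the contributions from $g(b,c)$ at large $n$) and precise control of the tridiagonal entries of the Jacobian. Crucially, the numerical threshold $d \le 7$ must emerge sharply from this step, matching the transition $|f'(1, x_*)| < 1 \iff d \le 7$ whose failure is exploited by \cref{prop:apriori_3}. I expect the a priori envelope bounds to be handled in \cref{sec:apriori_bounds}, the explicit contraction estimate in \cref{sec:contraction}, and the delicate one-variable calculus underpinning the $d = 7$ cutoff to be collected in \cref{sec:phi_estimate}.
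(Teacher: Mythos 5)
Your proposal matches the paper's proof essentially step by step: extend \cref{lem:F} to weighted boundaries, pass to the ratio dynamics $\psi$, use the three-parameter envelope $\varphi$ (\cref{lem:S-abc-iteration} and \cref{sec:apriori_bounds}/\cref{sec:phi_estimate}) to show the orbit enters a small region $\cS_{a_*,b_*,c_*}$, bound the tridiagonal Jacobian $D\psi$ there to get a contraction (\cref{sec:contraction}), and finish via Banach's fixed point theorem plus continuity of ${\mathsf R}^{-1}$ and tightness. The only cosmetic divergence is in the norm: you guess an exponentially weighted $\ell^\infty$ norm, whereas the paper uses the plain $\ell^\infty$ norm for $d=2$ and $\|x\| = |x_1| + \sup_{i \ge 2}|x_i|$ for $3 \le d \le 7$ as in \eqref{eq:norm}, but the strategic role of the norm modification is identical.
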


Note that $\mu_{n,d}^w$ is unaffected by a scaling of $w$. Thus, we may assume henceforth that $w$ is itself a probability distribution. Note in particular that this means it belongs to $\cE$ (it is symmetric and its support is an interval or all of $\Z$).
Our broad goal is to prove that the iterates of $\psi$ starting from $\varphi(w)$ converge pointwise. Along the way we also argue that all iterates stay in the set $\cS_{0,1,c}$, whose preimage under $\varphi$ is a tight family of distributions.
Consequently, we argue that $f_n(\rho)$ converges in distribution and that $\mu^0_{n,d}$ converges weakly.


The major step is to prove the following.
\begin{prop}\label{prop:psi_conv}
Fix $2 \le d \le 7$. Let $w$ be a good sequence of weights normalized to be a probability distribution. There exists $c \in (0,1)$ such that the $\psi^{(k)}({\mathsf R}(w))$ belongs to $\cS_{0,1,c}$ for all $k$ large enough and converges pointwise as $k \to \infty$.
\end{prop}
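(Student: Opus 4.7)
The plan is to split the proof into two parts following the outline given in the introduction: first, show that after finitely many iterations the orbit $\psi^{(k)}({\mathsf R}(w))$ enters a set $\cS_{0,1,c}$ with $c<1$ (a ``basin of attraction''), and second, establish convergence within this basin via a contraction argument on the Fr\'echet derivative of $\psi$. The envelope lemma (\cref{lem:S-abc-iteration}) provides the main tool for the first step, while a derivative bound of the form $\lVert D\psi(x) \rVert_{\mathrm{op}} \le \lambda < 1$ drives the second.

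For the first step, I would start by noting that if $w$ is a good weight probability, then ${\mathsf R}(w) \in \cS_{0,1,1}$: all ratios are in $[0,1]$, equal $1$ on the flat part of length $k$, and are bounded by the decay constant $c_w < 1$ thereafter. Using the explicit formula \eqref{eq:psi_n}, a direct computation shows that one application of $\psi$ effectively shortens the ``flat part'' by one position, because the factor $x_{n-1}^d$ in $\psi_n$ drops the coordinate strictly below $1$ as soon as the tail decays even slightly. Thus after at most $k$ iterations the orbit lies in some $\cS_{0, b_0, c_0}$ with $b_0, c_0 < 1$. From there, iterating the envelope map $\varphi$ on $\cI$ and using that the maps $b\mapsto f(1,b)$ and $(b,c)\mapsto g(b,c)$ are strict contractions away from the corner $b=c=1$ should produce a universal $c < 1$ with the orbit eventually trapped in $\cS_{0, 1, c}$. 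This replaces the role of the argument for $d\ge 8$ in \cref{prop:apriori_3}, but now in a regime where the iterates contract to a single basin rather than oscillate between two.

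For the second step, the Fr\'echet derivative of $\psi$ has a tridiagonal structure: $\partial \psi_n / \partial x_m$ is nonzero only for $m \in \{n-1, n, n+1\}$, and the $x_{n-1}^d$ prefactor makes the tail entries (for large $n$) very small on $\cS_{0,1,c}$ when $c$ is small. The critical coordinate is $n=1$, whose behavior is essentially governed by the toy map $f(1,x)$ from \eqref{eq:f}; the transition from $d \le 7$ to $d \ge 8$ is precisely where $|f'(1,x_*)|$ crosses $1$ at the fixed point $x_*$, as \Cref{Plots} illustrates. The main technical obstacle, as I see it, is making this quantitative \emph{uniformly} over all of $\cS_{0,1,c}$ rather than just in a neighborhood of the fixed point: one must bound the operator norm of $D\psi$ (in a suitable weighted $\ell^\infty$ metric that discounts coordinates at geometric rate) by some $\lambda < 1$ at every point of the basin, controlling both the interaction between $x_1$ and $x_2$ and the coupling to the tail. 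I expect this will require first contracting $\cS_{0,1,c}$ to a smaller invariant set via additional envelope iterations so that $x_1$ is confined to a neighborhood where the $n=1$ derivative bound is safely below $1$, which is presumably the role of the separate a priori bounds and $\varphi$-estimate sections referenced in the introduction. Once the contraction is established, Banach's fixed point theorem yields a unique fixed point in the basin and exponential convergence of iterates, and pointwise convergence of $\psi^{(k)}({\mathsf R}(w))$ for any good $w$ follows, with a limit independent of $w$ since it must be this unique fixed point.
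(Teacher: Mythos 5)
Your plan follows the paper's strategy essentially step for step: enter the basin $\cS_{0,1,c}$ by tracking how $\psi$ shortens the flat part of ${\mathsf R}(w)$ (the paper's $\cA_k\to\cA_{k-1}$ argument, i.e.~\eqref{eq:good}), shrink to a smaller invariant set $\cS_{a_*,b_*,c_*}$ via the envelope map $\varphi$ (i.e.~\eqref{eq:S-iteration}), and then show $\lVert D\psi_x\rVert<1$ uniformly on that set and apply Banach (i.e.~\eqref{eq:contraction}). The one place you diverge is the choice of norm: you propose a geometrically weighted $\ell^\infty$ metric, whereas the paper uses $\lVert x\rVert=|x_1|+\sup_{i\ge2}|x_i|$ for $3\le d\le 7$ (plain $\ell^\infty$ for $d=2$), having noted that unweighted $\ell^\infty$ fails for $d\ge4$ even near the fixed point. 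A geometric weight $\omega^i$ could in principle also work — it damps the problematic $\partial\psi_1/\partial x_2$ contribution by $\omega$ — but it simultaneously amplifies $\omega^{-1}\partial\psi_2/\partial x_1$, so one must verify that tradeoff numerically; the paper's mixed norm sidesteps the balancing by simply treating the first coordinate additively and the tail in sup-norm. Everything else in your sketch matches the argument, and the ``hard technical work'' you defer is exactly the rigorous envelope estimates (\cref{prop:estimate_fixed_point}, proved in \cref{sec:phi_estimate}) and the derivative bounds of \cref{lem:derivative12,lem:derivative_nge2}.
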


\begin{proof}[Proof of \Cref{thm:main_conv2} assuming \Cref{prop:psi_conv}]
It is not hard to check that ${\mathsf R}^{-1}$ is continuous on its domain $\cR$ (when both the domain and range are endowed with the pointwise topology).
    Since $F^{(k)}(w) = {\mathsf R}^{-1} \circ \psi^{(k)} \circ {\mathsf R}(w)$, we conclude from \Cref{prop:psi_conv} that $F^{(k)}(w)$ converges pointwise and that $F^{(k)}(w)$ belongs to ${\mathsf R}^{-1}(\cS_{0,1,c})$ for all $k$ large enough. Observe that ${\mathsf R}^{-1}(\cS_{0,1,c})$ is a tight family of distributions.
    Thus, letting $f_{n} \sim \mu_{n,d}^w$ and recalling \cref{lem:F} (or rather its straightforward extension from $\mu^0_{n,d}$ to $\mu^w_{n,d}$) , this implies that $f_n(\rho)$ converges in distribution as $n \to \infty$.

    It is now not too hard to see that $f_n$ converges locally. We very briefly give the idea. Fix $r \ge 0$ and consider the restriction $f_{n,r}$ of $f_n$ to $L_r$ (assume $n$ large). Then, for a given Lipschitz function $\xi$ on $L_r$, we have that $\P(f_{n,r}=\xi)$ is proportional $\prod_v \P(f_{n-r}(\rho)=\xi(v))$, where the product is over the leaves of $L_r$. Since this product converges as $n\to\infty$, we see that the distribution of $f_{n,r}$ converges as $n\to\infty$, which shows that $f_n$ converges locally.
\end{proof}

To prove \Cref{prop:psi_conv} we employ the following strategy.
First, we shall show that
\begin{equation}\label{eq:good}
 \text{if $w$ is good then there exist $k \ge 0$ and $c \in [0,1)$ such that $\psi^{(k)}({\mathsf R}(w)) \in \cS_{0,1,c}$.}
\end{equation}
This reduces the problem to understanding the iterations of $\psi$ starting from an initial point in $\cS_{0,1,c}$.
For this we will show that there exist numbers $a_*,b_*,c_*$ with $0 \le a_*,c_* \le b_* \le 1$ such that
\begin{equation}\label{eq:S-iteration}
 \text{for every $0 \le c < 1$ there exists $k$ such that $\psi^{(k)}(\cS_{0,1,c}) \subseteq \cS_{a_*,b_*,c_*}$}, 
\end{equation}
and
\begin{equation}\label{eq:contraction}
 \text{$\psi$ preserves the convex set $\cS_{a_*,b_*,c_*}$ and is a contraction on it}.
\end{equation}
The contraction referred to in~\eqref{eq:contraction} is with respect to the metric induced by the norm given by 
\begin{equation}\label{eq:norm}
\|x\| := \begin{cases} \sup_{i \ge 1} |x_i| &\text{if }d=2 \\ |x_1| + \sup_{i \ge 2} |x_i| &\text{if }3 \le d \le 7 \end{cases}.
\end{equation}
Let us remark that the two norms are equivalent (of course, in the case of $d=2$, this is just the usual $\ell^\infty$-norm). However, the map $\psi$ is not a contraction with respect to the $\ell^\infty$ norm when $d \ge 4$ even very near the fixed point (see \Cref{table:abc_derivative_approx}), and for this reason we use a slightly modified norm (we use this norm also for $d=3$). Let us mention that the $\ell^1$-norm is also a natural candidate, but that this would not allow to obtain the full stated result as $\cS_{0,1,c}$ is not in $\ell^1$.

Let us show how the above easily yields \Cref{prop:psi_conv}.
\begin{proof}[Proof of \Cref{prop:psi_conv} assuming~\eqref{eq:good}-\eqref{eq:contraction}]
Let $w$ be a good sequence of weights normalized to be a probability distribution. Denote $x^{(k)} := \psi^{(k)}(R(w))$. By~\eqref{eq:good}, there exists $k_0$ and $c$ such that $x^{(k_0)} \in \cS_{0,1,c}$. By~\eqref{eq:S-iteration}, there exists $k_1$ such that $x^{(k_0+k_1)} \in \cS_{a_*,b_*,c_*}$. By~\eqref{eq:contraction}, there exists $\lambda \in (0,1)$ such that $x^{(k)} \in \cS_{a_*,b_*,c_*}$ and $\|x^{(k+1)}-x^{(k)}\| \le \lambda\|x^{(k)}-x^{(k-1)}\|$ for all $k \ge k_0+k_1$. Since $\cS_{a_*,b_*,c_*} \subset \cS_{0,1,c_*}$, we have that $x^{(k)} \in \cS_{0,1,c_*}$ for all $k$ large enough, and since $\|\cdot\|$ makes $\cS_{0,1,c_*}$ a complete metric space, we have that $x^{(k)}$ converges in this space, which clearly implies pointwise convergence.
\end{proof}

The next two sections are devoted to the proofs of~\eqref{eq:good}-\eqref{eq:contraction}. In \cref{sec:apriori_bounds} we prove~\eqref{eq:good} and~\eqref{eq:S-iteration} and in \cref{sec:contraction} we prove~\eqref{eq:contraction}.

\subsection{Getting absorbed into the basin of attraction}\label{sec:apriori_bounds}

\begin{proof}[Proof of \eqref{eq:good}]
    For $k \ge 0$, let $\cA_k$ be the set of $x \in \cR$ such that $x_i=1$ for $i \le k$ and $\sup_{i>k} x_i < 1$.
    Note that $\cA_0 \cup \cA_1 = \bigcup_{c \in [0,1)} \cS_{0,1,c}$.
    Let $w$ be a good sequence of weights normalized to be a probability distribution. Note that $R(w) \in \bigcup_{k \ge 0} \cA_k$. Thus, it suffices to show that
    \[ \psi(\cA_k) \subset \cA_{k-1} \qquad\text{for all }k \ge 2 .\]
    Fix $k \ge 1$ and $x \in \cA_k$. Denote $c := \sup_{i>k} x_i < 1$. It is clear from~\eqref{eq:psi_1} and~\eqref{eq:psi_n} that $\psi_i(x)=1$ for $i\le k-1$ and that $\psi_i(x)<1$ for $i \ge k$. Moreover, in the same manner as in the proof of \cref{lem:S-abc-iteration}, one obtains a uniform upper bound on $\psi_i(x)<1$ for $i \ge k$, namely,
    \[ \psi_i(x) \le \left(\frac{1+c+c^2}{2+c}\right)^d .\]
    This shows that $\psi(x) \in \cA_{k-1}$.
\end{proof}

We now move on to the proof of~\eqref{eq:S-iteration}.
Given \cref{lem:S-abc-iteration}, the proof of \eqref{eq:S-iteration} boils down to understanding the behavior of the iterations of $\varphi$ applied to $\cS_{0,1,c}$. It is not too hard to show that the iterations of $\varphi$ do converge to a fixed point $(a_*,b_*,c_*)$ if we start from $a=0,b=1,c>1/2$ (this part is true for all values of $d$). Nevertheless to have estimates of the derivative of $\psi$, we need good estimates on $(a_*,b_*,c_*)$. Iterations in a computer yield the estimates of $(a_*,b_*,c_*)$ laid out in \Cref{table:abc_computer}.

\begin{table}[h]
    \centering
    \begin{tabular}{|c|c|c|c|c|c|c|c|}
    \hline
  $d$  &2 &3&4&5&6&7&8  \\
  \hline
    $a_*$ &.5192 &.4374 & .3762& .3294 &.2932 & .2645 & .1027 \\ \hline
    $b_*$ &.6335 &.4649 &.3828 &.3310 &.2935 & .2646 & .4906 \\ \hline
    $c_*$ & .1988 & .0344 & .0060& $9.5\cdot 10^{-4}$& $1.4 \cdot 10^{-4}$& $1.8\cdot 10^{-5}$ & $1.4 \cdot 10^{-4}$ \\ \hline
\end{tabular}
\caption{The fixed point of $\varphi$. Simulated in computer by running ten thousand iterations started from $a=0,b=1,c=.9$.}\label{table:abc_computer}
\end{table}
For $d \ge 8$, iterations show that $a_*$ and $b_*$ are far apart, which is a manifestation of the non-convergence (in reality, as we have seen in \cref{sec:nonconvergence}, the actual value of $x_1$ alternates between being close to $a_*$ and $b_*$).

To make the result completely rigorous, we obtain rigorous estimates of $a_*,b_*,c_*$, outlined in \Cref{table:abc_final_section2}. While we could have obtained much better estimates (closer to the actual fixed point), it will turn out that these estimates are enough to bound the operator norm of the derivative of $\psi$ (with respect to the norm \eqref{eq:norm} on the base space). For larger values of $d$ (especially $d=7$), we need slightly better precision.

\begin{prop}\label{prop:estimate_fixed_point}
Fix $2 \le d\le 7$. Let $(\hat a_*,\hat b_*,\hat c_*)$ be as in \Cref{table:abc_final_section2}. Then, for any $0 \le c<1$, $\psi^{(k)}(\cS_{0,1,c}) \subset \cS_{\hat a_*,\hat b_*,\hat c_*}$ for all large enough $k$.
   \begin{table}
    \centering
    \begin{tabular}{|c|c|c|c|c|c|c|}
    \hline
  $d$  &2 &3&4&5&6&7  \\
  \hline
    $\hat a_*$ &.5 &.4 &.3 &.3 &.27 &.26 \\ \hline
    $\hat b_*$ &.7 &.6 &.5 &.4 &.32 &.27 \\ \hline
    $\hat c_*$ &.27 &.2 &.1 &.1 &.1 &.01 \\ \hline
\end{tabular}
\caption{Rigorous estimates of $a_*,b_*,c_*$. Proofs can be found in \Cref{sec:phi_estimate}.}\label{table:abc_final_section2}
\end{table} 
\end{prop}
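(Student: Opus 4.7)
By \Cref{lem:S-abc-iteration}, the statement reduces to showing that for each $c \in [0, 1)$, the orbit $(a_k, b_k, c_k) := \varphi^{(k)}(0, 1, c)$ of the three-dimensional map $\varphi$ on $\cI$ eventually lies in the box
\[ B \ :=\ \Big\{(a, b, \gamma) \in \cI : a \ge \hat a_*,\ b \le \hat b_*,\ \gamma \le \hat c_*\Big\} .\]
The proof thus splits into two ingredients: \emph{invariance} of $B$ under $\varphi$ and \emph{absorption} of the orbit into $B$.

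For invariance, we use that $f(1, b)$ is decreasing in $b$ (by \eqref{eq:f'}), that $f(1+c, a)$ is decreasing in $a$ and increasing in $c$, and that $g(b, c)$ is increasing in both arguments (by \eqref{eq:g'}). With these monotonicities, $\varphi(B) \subseteq B$ reduces to the three scalar inequalities
\[ f(1, \hat b_*) \ge \hat a_*, \qquad f(1 + \hat c_*, \hat a_*) \le \hat b_*, \qquad g(\hat b_*, \hat c_*) \le \hat c_* ,\]
one check of each for every $d \in \{2, \ldots, 7\}$. These are elementary scalar calculations using the values in \Cref{table:abc_final_section2}, and I would defer them to \Cref{sec:phi_estimate}.

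For absorption, I would argue that for each fixed $c \in [0, 1)$ the orbit enters $B$ in finitely many steps. After one iteration we are at $((2/3)^d, 1, g(1, c))$; after two, the $b$-coordinate becomes strictly less than $1$. From that moment onward, since $g$ is strictly increasing in its first argument, the contraction on $c$ is strictly stronger than what the one-dimensional map $c \mapsto g(1, c)$ would provide. This coupling is essential: the one-dimensional map on its own has a fixed point strictly larger than $\hat c_*$, so pushing $c_k$ below $\hat c_*$ genuinely requires that $b_k$ has already decreased. A bootstrap argument then shows that after finitely many iterations the orbit becomes coordinate-wise monotone (with $a_k$ non-decreasing and $b_k, c_k$ non-increasing) and is bounded, hence converges to a fixed point of $\varphi$; comparison with \Cref{table:abc_computer} identifies this limit with $(a_*, b_*, c_*)$, which sits in the interior of $B$, so the orbit enters $B$ from some $k$ on.

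The \textbf{main obstacle} is making the absorption step rigorous. The map $\varphi$ is not uniformly monotone — $b' = f(1+c, a)$ decreases in $a$ but increases in $c$ — so the orbit can oscillate during the initial transient before coordinate-wise monotonicity kicks in. I plan to handle this by an explicit a priori bound showing that $c_k$ drops below a fixed threshold within $O(\log(1/(1-c)))$ steps (by iterating the two-variable contraction of $g$ together with the bound $b_k < 1$ that is available from step two onward), followed by a short case check ensuring that from that moment on the monotone regime $a_{k+1} \ge a_k$, $b_{k+1} \le b_k$, $c_{k+1} \le c_k$ is installed (and then self-perpetuates, since these inequalities are preserved by $\varphi$ as a direct consequence of its one-sided monotonicities). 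Uniqueness of the fixed point in the relevant region, or direct comparison with the numerical value in \Cref{table:abc_computer}, finally places the limit in the interior of $B$.
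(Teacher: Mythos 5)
Your reduction of the proposition to the dynamics of $\varphi$ on $\cI$ via \Cref{lem:S-abc-iteration} is correct, and your observation of the (lattice) monotonicity structure of $\varphi$ is sound. However, the proposed proof has two serious gaps, the first of which is fatal.

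\textbf{The invariance step fails.} You reduce $\varphi(B) \subseteq B$ to the scalar inequalities $f(1,\hat b_*) \ge \hat a_*$, $f(1+\hat c_*,\hat a_*) \le \hat b_*$ and $g(\hat b_*,\hat c_*) \le \hat c_*$. The reduction is correct, but the inequalities are \emph{false} for most of the relevant $d$. For instance, with the values from \Cref{table:abc_final_section2}: for $d=3$, $f(1,0.6) = (1.6/2.2)^3 \approx 0.385 < 0.4 = \hat a_*$; for $d=5$, $f(1,0.4) = (1.4/1.8)^5 \approx 0.284 < 0.3$; for $d=7$, $f(1,0.27) \approx 0.2595 < 0.26$; and for $d=6$, $f(1.1,0.27) \approx 0.357 > 0.32 = \hat b_*$. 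So $B$ is \emph{not} forward-invariant under $\varphi$. The reason is structural: the entries of \Cref{table:abc_final_section2} are deliberately chosen as loose outer bounds on the true fixed point ($\hat a_* < a_*$, $\hat b_* > b_*$), and $\varphi$ applied to the corners of such a relaxed box pushes outside it. The paper never claims $\cS_{\hat a_*, \hat b_*, \hat c_*}$ is $\psi$-invariant; the proposition only states the orbit eventually sits inside that set.

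\textbf{The absorption step is not rigorous as stated.} Even granting invariance, the claim that the orbit becomes coordinate-wise monotone after a transient would need to be actually proved, and it is not obviously true: starting from $(0,1,c)$ with $c$ below the fixed point of $g(1,\cdot)$, the $c$-coordinate initially increases, which makes the $b$-coordinate (through $f(1+c,a)$) increase at the next step, so the orbit need not be monotone during the transient. More importantly, the step ``comparison with \Cref{table:abc_computer} identifies this limit with $(a_*,b_*,c_*)$'' is not an argument: that table is explicitly produced by running a computer simulation, and the paper states that readers who trust it may \emph{skip} the rigorous section entirely. A proof cannot simply import those numbers.

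\textbf{What the paper does instead.} Rather than seeking an invariant box, the paper brackets the dynamics against one-dimensional maps that can be analysed rigorously: first $c_k$ is squeezed via the scalar map $g(1,\cdot)$ whose unique fixed point $c_g \in (0,1)$ can be located by evaluating $g(1,\cdot)$ at two trial points (\Cref{lem:fixed_point_of_upper_bound,lem:fixed_g}); then, with $c$ pinned near $c_g$, the two-step maps ${\sf i}(c_g,\cdot) = f(1,f(1+c_g,\cdot))$ and ${\sf j}(c_g,\cdot) = f(1+c_g, f(1,\cdot))$ control $a_k$ and $b_k$ along even iterations of $\psi$ (\Cref{lem:two_fold_iteration,lem:iterate_ab}); for $d\in\{2,7\}$ a second refinement round (\Cref{lem:c_2nditerate,lem:iteration_ab_2nd}) sharpens the bounds. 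Each of the one-dimensional fixed points is sandwiched rigorously by a pair of explicit evaluations (\Cref{lem:anal_1,lem:anal_2}), yielding the final table whose entries then dominate \Cref{table:abc_final_section2}. If you want to salvage the invariance idea, you would have to use a genuinely invariant box (likely much tighter, on the order of the values in \Cref{table:abc_final}), verify the three scalar inequalities there, and still separately prove absorption into that smaller box — at which point you have essentially reproduced the paper's bracketing argument.
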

The proof of \Cref{prop:estimate_fixed_point} is rather technical, so we postpone it to \Cref{sec:phi_estimate} on order not to disrupt the flow of the argument. Readers who are content with the computer estimates in \Cref{table:abc_computer} can skip \Cref{sec:phi_estimate} altogether.


\subsection{Contraction via the derivatives}\label{sec:contraction}

\begin{table}
    \centering
    \scalebox{1.1}{
    \begin{tabular}{|c|c|c|c|c|c|c|}
    \hline
  $d$  &2&3&4&5&6&7  \\
  \hline
   $x_1$ & .5992 & .4555 & .3803 & .3303 & .2936 & .2646 \\ \hline
   $x_2$ & .1712 & .0327 & .0059 & $9.5 \cdot 10^{-4}$ & $1.4 \cdot 10^{-4}$ & $1.8 \cdot 10^{-5}$ \\ \hline
   $x_3$ & .0222 & $3.2 \cdot 10^{-5}$ & $1.2 \cdot 10^{-9}$ & $1.6 \cdot 10^{-16}$ & $6.4 \cdot 10^{-24}$ & $5.1 \cdot 10^{-34}$ \\ \hline
   $x_4$ & $4.7 \cdot 10^{-4}$ & $3.2 \cdot 10^{-14}$ & $1.8 \cdot 10^{-36}$ & $2.5 \cdot 10^{-76}$ & $6.8 \cdot 10^{-140}$ & $9.1 \cdot 10^{-234}$ \\ \hline
   $-\frac{\partial \psi_1}{\partial x_1}$ &.2655  & .4704 & .6213 & .7467 & .8575 & .9577 \\ \hline
   $\frac{\partial \psi_1}{\partial x_2}$  &.4220  & .4234 & .4184 & .4100 & .3992 & .3874 \\ \hline
   $\frac{\partial \psi_2}{\partial x_1}$  &.3357  & .1467 & $1.8 \cdot 10^{-7}$ & .0108 & .0022 & $3.7 \cdot 10^{-4}$ \\  \hline
   $\frac{\partial \psi_2}{\partial x_2}$  &.1773  & .0647 & $4.6 \cdot 10^{-9}$ & .0036 & $6.3 \cdot 10^{-4}$ & $9.7 \cdot 10^{-5}$ \\ \hline
\end{tabular}
}
\caption{The fixed point $x_*$ and some partial derivatives of $\psi$ at this point. Simulated in computer by running a million iterations.}\label{table:abc_derivative_approx}
\end{table}

Our goal in this section is to prove~\eqref{eq:contraction}. We do this by bounding the derivatives of $\psi$.
Recall the norm $\|\cdot\|$ defined in~\eqref{eq:norm}.
It is straightforward to check that the Frech\'et derivative of the map $\psi$ evaluated at $x$ is the linear operator on the vector space spanned by $\cR$ given by
$$
(D\psi_x(y))_i = (D\psi_i)_x(y) = \sum_{j \ge 1} \frac{\partial \psi_i}{\partial x_j} \cdot y_j.
$$
Note that $\partial \psi_i / \partial \psi_j = 0$ unless $|i-j| \le 1$. 
Recall that the operator norm of $D\psi_x$ is $\|D\psi_x\| := \sup_{y : \|y\|=1} \|D\psi_x y\|$. Here the norm we use is as in \Cref{eq:norm}.

\begin{prop}\label{prop:op_norm_estimate}
    $\|D\psi_x\| <.99$ for all $x \in \cS_{a,b,c}$ with $(a,b,c)$ as in \Cref{table:abc_final_section2}.
\end{prop}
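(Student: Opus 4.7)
The plan is to exploit the tridiagonal structure of the Jacobian $D\psi_x$, derive an explicit upper bound on its operator norm in terms of the individual entries, bound those entries uniformly over $\cS_{\hat a_*,\hat b_*,\hat c_*}$, and verify numerically for each $d\in\{2,\dots,7\}$ that the bound is strictly below $0.99$. Write $a_{ij}:=\partial\psi_i/\partial x_j$; direct differentiation of \eqref{eq:psi_1} and \eqref{eq:psi_n} (the latter using $\psi_n=w_n^d$ with $w_n=x_{n-1}(1+x_n+x_nx_{n+1})/(1+x_{n-1}(1+x_n))$) yields explicit formulas for all $a_{ij}$, which vanish for $|i-j|>1$. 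In particular
\[a_{11}=-du^{d-1}\tfrac{1-x_2}{(1+2x_1)^2},\qquad a_{12}=du^{d-1}\tfrac{x_1}{1+2x_1},\qquad u=\tfrac{1+x_1(1+x_2)}{1+2x_1},\]
and for $n\ge 2$, $a_{nj}=dw_n^{d-1}\,\partial w_n/\partial x_j$ for $j\in\{n-1,n,n+1\}$, with each $\partial w_n/\partial x_j$ a nonnegative elementary rational function.

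Next, I would derive the operator norm inequality. When $d=2$, the norm is standard $\ell^\infty$ and $\|D\psi_x\|$ is the maximum row sum. When $3\le d\le 7$, with the mixed norm $\|y\|=|y_1|+\sup_{i\ge 2}|y_i|$, I claim
\[\|D\psi_x\|\le\max\!\Bigl\{|a_{11}|+|a_{21}|,\ |a_{12}|+\max\bigl(|a_{22}|+|a_{23}|,\ \sup_{i\ge 3}(|a_{i,i-1}|+|a_{ii}|+|a_{i,i+1}|)\bigr)\Bigr\}.\]
To prove this, for $y$ with $\|y\|=1$ set $s=|y_1|$ and $t=\sup_{i\ge 2}|y_i|$ (so $s+t=1$), bound $|(D\psi_x y)_1|\le|a_{11}|s+|a_{12}|t$ and $\sup_{i\ge 2}|(D\psi_x y)_i|\le\max\bigl(|a_{21}|s+(|a_{22}|+|a_{23}|)t,\,M't\bigr)$ with $M':=\sup_{i\ge 3}(|a_{i,i-1}|+|a_{ii}|+|a_{i,i+1}|)$, sum, and observe that the result is a convex piecewise-linear function of $(s,t)$ whose maximum on the simplex $\{s+t=1\}$ is attained at a vertex.

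The main task is to bound each entry uniformly on $\cS_{\hat a_*,\hat b_*,\hat c_*}$. For rows $i\ge 3$, all three coordinates $x_{i-1},x_i,x_{i+1}$ lie in $[0,\hat c_*]$, so $w_i\le\hat c_*(1+2\hat c_*)$, and the factor $dw_i^{d-1}$ renders $M'$ negligible. For row $i=2$, $x_1\in[\hat a_*,\hat b_*]$ while $x_2,x_3\in[0,\hat c_*]$, so $w_2\le\hat b_*(1+2\hat c_*)$ and each $|a_{2j}|$ carries a small factor of order $\hat b_*^{d-1}$. For row $i=1$, maximize $|a_{11}|$ and $|a_{12}|$ directly over the rectangle $[\hat a_*,\hat b_*]\times[0,\hat c_*]$: each is a product of monotone factors in $x_1$ (for instance $u$ is decreasing in $x_1$ when $x_2<1$, and $(1-x_2)/(1+2x_1)^2$ is decreasing in both variables), so the supremum is attained at a corner or at an interior critical point in $x_2$ obtained by elementary calculus.

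Finally, I would substitute the values from Table~\ref{table:abc_final_section2} and verify the inequality for each $d$. The dominant term for large $d$ is $|a_{11}|+|a_{21}|$, which Table~\ref{table:abc_derivative_approx} indicates reaches $\approx 0.96$ at the fixed point for $d=7$. The main obstacle is precisely this tightness at $d=7$: a quick estimate at $(x_1,x_2)\approx(\hat a_*,\hat c_*)=(0.26,0.01)$ already yields $|a_{11}|\approx 0.986$, leaving only a thin margin to $0.99$. Success there hinges on $\hat c_*=0.01$ being small enough that $|a_{21}|$ contributes only of order $10^{-3}$ (via $\hat b_*^{d-1}=0.27^6\approx 4\cdot 10^{-4}$) and on $\hat a_*=0.26$ being a sharp rigorous estimate; if additional margin is required, the finer bounds underlying \cref{prop:estimate_fixed_point} can be invoked. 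For $d\le 6$ the bound is expected to hold comfortably.
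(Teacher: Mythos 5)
Your proposal follows essentially the same route as the paper: exploit the tridiagonal structure of $D\psi_x$, derive the mixed-norm operator bound $\|D\psi_x\|\le\max\{|a_{11}|+|a_{21}|,\,|a_{12}|+\sup_{i\ge2}\sum_{j\ge2}|a_{ij}|\}$ (the paper proves this in \cref{lem:op_norm} by a slightly more direct decomposition, your convexity-on-the-simplex argument gives the same conclusion), then bound the partial derivatives entrywise on $\cS_{\hat a_*,\hat b_*,\hat c_*}$ via monotonicity (the paper's \cref{lem:derivative12,lem:derivative_nge2}), and verify numerically. Your worry about the $d=7$ margin is unfounded: plugging in $(\hat a_*,\hat b_*,\hat c_*)=(.26,.27,.01)$ already gives $|a_{11}|+|a_{21}|\le .986+.001=.987<.99$ and $|a_{12}|+\sup\le .404$, so no finer estimates from \cref{prop:estimate_fixed_point} are needed.
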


From here it is simple to deduce~\eqref{eq:contraction}.
\begin{proof}[Proof of \eqref{eq:contraction} assuming \cref{prop:op_norm_estimate}]
    Let $(a,b,c)$ be as in \Cref{table:abc_final_section2}.
    The fact that $\psi$ preserves $\cS_{a,b,c}$ is the content of \Cref{prop:estimate_fixed_point}. Let $x,y\in \mathcal{S}_{a,b,c}$, and let $T$ denote a bounded linear functional on the vector space spanned by $\mathcal{R}$,  such that $\|T\|=1$ (the operator norm is defined with respect to the norm given in \eqref{eq:norm}). Since $\psi$ is differentiable, via the chain rule we have that 
    \[
    D(T\circ \psi)=T\circ D\psi. 
    \]
    For $t \in [0,1]$, define
    \[
    x(t):=ty+(1-t)x\qquad\text{and}\qquad r(t):=T\circ \psi(x(t)).
    \]
By the fundamental theorem of calculus,
\begin{align*}
T\circ \psi(y)- T\circ \psi(x) &=\int_{0}^{1}r'(t)dt \\ 
&=\int_{0}^{1} T\circ D\psi_{x(t)}\left(y-x\right)dt.   
\end{align*}
Thus, using that $\|T\|=1$,
\begin{align*}
|T\circ\psi(y)-T\circ\psi(x)|&\leq \int_{0}^{1}\|T\circ D\psi_{x(t)}\| \cdot \|y-x\|dt \\ 
&\leq  \sup_{t\in [0,1]} \|D\psi_{x(t)}\| \cdot \|y-x\|.
\end{align*}
Since $\mathcal{S}_{a,b,c}$ is convex, $x(t)\in \mathcal{S}_{a,b,c}$ for all $t\in [0,1]$, so we may replace the upper bound of the norm of the derivative on the path $x(t)$ with the upper bound of the same on $\mathcal{S}_{a,b,c}$. Finally, taking the supremum over all $T$ such that $\|T\|=1$, we may conclude that
    \begin{equation*}
        \|\psi(x)  - \psi(y)\| \le \sup_{y \in \cS_{a,b,c}}\|D\psi_y\| \cdot \|x-y\|,
    \end{equation*}
  which combined with \Cref{prop:op_norm_estimate} gives that $\psi$ is a contraction on $\cS_{a,b,c}$.
\end{proof}

The rest of this section is devoted to the proof of \cref{prop:op_norm_estimate}.
We begin by giving an upper bound for the operator norm of $D\psi_x$ in terms of the partial derivatives of $\psi$.

\begin{lemma}\label{lem:op_norm}
For $d=2$, the operator norm of $D\psi_x$ is
\begin{align*}
 \|D\psi_x\|
  &= \sup_{i \ge 1} \sum_{j \ge 1} \left|\frac{\partial \psi_i}{\partial x_j}\right| \\
  &= \max\left\{ \left|\frac{\partial \psi_1}{\partial x_1}\right| + \left|\frac{\partial \psi_1}{\partial x_2}\right| ,~ \sup_{i \ge 2} \left(\left|\frac{\partial \psi_i}{\partial x_{i-1}}\right|+\left|\frac{\partial \psi_i}{\partial x_i}\right|+\left|\frac{\partial \psi_i}{\partial x_{i+1}}\right|\right) \right\}.
\end{align*}
For $d \ge 3$, the operator norm of $D\psi_x$ is
\begin{align*}
 \|D\psi_x\|
  &\le \max\left\{ \left|\frac{\partial \psi_1}{\partial x_1}\right| + \sup_{i \ge 2} \left|\frac{\partial \psi_i}{\partial x_1}\right| ,~ \sum_{j \ge 2} \left|\frac{\partial \psi_1}{\partial x_j}\right| + \sup_{i \ge 2} \sum_{j \ge 2} \left|\frac{\partial \psi_i}{\partial x_j}\right| \right\} \\
  &= \max\left\{ \left|\frac{\partial \psi_1}{\partial x_1}\right| + \left|\frac{\partial \psi_2}{\partial x_1}\right| ,~ \left|\frac{\partial \psi_1}{\partial x_2}\right| + \sup_{i \ge 2} \sum_{j=i-1}^{i+1} \left|\frac{\partial \psi_i}{\partial x_j}\right| \1_{j \ge 2} \right\}.
\end{align*}
\end{lemma}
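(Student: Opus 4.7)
I will treat the two cases of the norm separately, and in both cases reduce the operator-norm computation to a row-by-row bound on the partial derivatives, using the tridiagonal structure $\partial\psi_i/\partial x_j=0$ for $|i-j|>1$ that is immediate from \eqref{eq:psi_1}--\eqref{eq:psi_n}.

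For $d=2$, the norm $\|\cdot\|$ is just the $\ell^\infty$ norm, and the first equality is the standard identity that the operator norm of a (countably infinite) matrix acting on $\ell^\infty$ is the supremum of its row $\ell^1$-norms. The upper bound follows from the coordinate-wise triangle inequality $|D\psi_x(y)_i|\le \sum_j|\partial\psi_i/\partial x_j|\,|y_j|\le \|y\|_\infty\sum_j|\partial\psi_i/\partial x_j|$, and the matching lower bound is achieved by choosing $y_j=\operatorname{sign}(\partial\psi_i/\partial x_j)$ for a near-maximizing row $i$, which is a well-defined unit vector because each row has only finitely many nonzero entries. The second equality then follows from writing down the nonzero entries: the first row has at most the two entries $j=1,2$, while the $i$th row with $i\ge 2$ has at most the three entries $j=i-1,i,i+1$, all in $\{1,2,\dots\}$.

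For $d\ge 3$, fix $y$ with $\|y\|\le 1$ and set $s:=\sup_{i\ge 2}|y_i|$, so that $|y_1|+s\le 1$. For each coordinate $i$ I split the bound on $|D\psi_x(y)_i|$ into a contribution proportional to $|y_1|$ (the term $j=1$) and a contribution proportional to $s$ (the terms $j\ge 2$):
$|D\psi_x(y)_i|\le |\partial\psi_i/\partial x_1|\cdot|y_1|+\bigl(\sum_{j\ge 2}|\partial\psi_i/\partial x_j|\bigr)s$. Applying this to $i=1$ and taking the supremum over $i\ge 2$ of the same bound, then adding the two, yields $\|D\psi_x(y)\|\le A\,|y_1|+B\,s$, where $A$ and $B$ are exactly the two quantities inside the outer $\max$ on the right-hand side of the claimed inequality. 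Since $|y_1|+s\le 1$ and $A,B\ge 0$, one has $A|y_1|+Bs\le \max(A,B)$, and taking the supremum over admissible $y$ gives the desired upper bound on $\|D\psi_x\|$. The equivalent second form then comes from the tridiagonal structure once more: $\partial\psi_i/\partial x_1=0$ for $i\ge 3$, so $\sup_{i\ge 2}|\partial\psi_i/\partial x_1|=|\partial\psi_2/\partial x_1|$; and $\sum_{j\ge 2}|\partial\psi_1/\partial x_j|$ collapses to the single term $|\partial\psi_1/\partial x_2|$, while for $i\ge 2$ the inner sum reduces to the (two or three) terms indicated, after removing the $j=1$ contribution via the indicator $\1_{j\ge 2}$.

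There is no substantial obstacle in the argument; the only subtle point is to split off the $j=1$ coordinate so that the mixed ``$|y_1|+\sup_{i\ge 2}|y_i|$'' norm is respected, rather than trying to bound everything with a single row-sum as in the $\ell^\infty$ case. This is exactly why the bound for $d\ge 3$ is an inequality rather than an equality: the split may be suboptimal for some matrices, but it is sharp enough for our later application in the proof of \Cref{prop:op_norm_estimate}.
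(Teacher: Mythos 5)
Your proof is correct and follows essentially the same route as the paper's: split off the $j=1$ column so that the bound takes the form $A|y_1|+Bs$ with $s=\sup_{i\ge 2}|y_i|$ (the paper writes $b=1-|y_1|\ge s$, which is interchangeable), then use $|y_1|+s\le 1$ to conclude $\|D\psi_x\|\le\max(A,B)$. The paper omits the $d=2$ case as standard, and your treatment of it via the row-$\ell^1$ characterization of the $\ell^\infty$ operator norm, with the sign-matching lower bound justified by the finite row support, is the expected argument.
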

\begin{proof}
    We only prove the case of $d \ge 3$ (the case $d=2$ is similar and more standard as the norm is simply the $\ell^\infty$-norm).
    Denote $T := D\psi_x$ and $t_{i,j} := \frac{\partial \psi_i}{\partial x_j}$, so that $(Ty)_i = \sum_j t_{i,j} y_j$. Denote $m_i := \sum_{j \ge 2} |t_{i,j}|$.
    Let $y$ have norm 1. Write $a:=|y_1|$ and $b:=1-|a|$, and note that $|y_i| \le b$ for all $i \ge 2$.
    Then
    \[ |(Ty)_i| = \left|\sum_j t_{i,j} y_j\right| \le |t_{i,1}y_1| + \sum_{j \ge 2} |t_{i,j} y_j| \le a|t_{i,1}| + b m_i .\]
    Hence,
    \begin{align*}
     \|Ty\| = |(Ty)_1| + \sup_{i \ge 2} |(Ty)_i|
      &\le a|t_{1,1}| + b m_1 + \sup_{i \ge 2} (a|t_{i,1}| + b m_i) \\
      &\le \max\left\{ |t_{1,1}|+\sup_{i \ge 2} |t_{i,1}|,~ m_1 + \sup_{i \ge 2} m_i \right\} ,
    \end{align*}
    as desired.
\end{proof}

We now record the expressions for the derivatives.
\begin{align}
    -\frac{\partial \psi_1}{\partial x_1} &= \frac{d(1-x_2)(1+x_1+x_1x_2)^{d-1}}{(1+2x_1)^{d+1}}, \label{eq:dpsi_1_x_1} \\
    \frac{\partial \psi_1}{\partial x_2} & = \frac{dx_1(1+x_1+x_1x_2)^{d-1}}{(1+2x_1)^d} , \label{eq:dpsi_1_x_2}
    \end{align}
    and for $n \ge 2$,
    \begin{align}
    \frac{\partial \psi_n}{\partial x_{n-1}} &= \frac{dx_{n-1}^{d-1}(1+x_n+x_nx_{n+1})^d}{(1+x_{n-1}+x_{n-1}x_n)^{d+1}} ,\label{eq:dpsi_n_x_n-1} \\
    \frac{\partial \psi_n}{\partial x_n} &= \frac{dx_{n-1}^{d}(1+x_n+x_nx_{n+1})^{d-1}(1+x_{n-1}x_{n+1}+x_{n+1})}{(1+x_{n-1}+x_{n-1}x_n)^{d+1}}, \label{eq:dpsi_n_x_n}\\
    \frac{\partial \psi_n}{\partial x_{n+1}} &= \frac{dx_{n-1}^d x_n(1+x_n+x_nx_{n+1})^{d-1}}{(1+x_{n-1}+x_{n-1}x_n)^d}. \label{eq:dpsi_n_x_n+1}
\end{align}

The next two lemmas provide bounds on the partial derivatives of $\psi$. While we assume throughout that $(a,b,c)$ is chosen according to \Cref{table:abc_final_section2}, we use only minor information about the actual values, and keep bounds general where possible. We shall only plug in the exact values of these parameters at the very end. The simple properties we need are that $c < 1/2$ and that
\begin{equation}\label{eq:abc_prop1}
    (d-2)a \le 1\quad\text{for }2 \le d \le 5,\qquad(d-2)a \ge 1+dac\quad\text{for }d=6,7.
\end{equation}
\begin{equation}\label{eq:abc_prop2}
    b((1-c)d-2) \le 1\quad\text{for }d=2,3,4 .
\end{equation}
\begin{equation}\label{eq:abc_prop3}
    1+2b(1+c) \le d\quad\text{for }d \ge 3 .
\end{equation}

\begin{lemma}\label{lem:derivative12}
Let $x \in \cS_{a,b,c}$ with $(a,b,c)$ as in \Cref{table:abc_final_section2}. 
 Then
\begin{align}
    \left|\frac{\partial \psi_1}{\partial x_1}\right| &\le \frac{d(1+a)^{d-1}}{(1+2a)^{d+1}} &&\text{for $2 \le d \le 5$.}\\
     \left|\frac{\partial \psi_1}{\partial x_1}\right|    & \le  \frac{d(1-c)(1+a+ac)^{d-1}}{(1+2a)^{d+1}} &&\text{for $d = 6,7$.}\label{psi1x1}\\
    \left|\frac{\partial \psi_1}{\partial x_2}\right| & \le \frac{db(1+b+bc)^{d-1}}{(1+2b)^d} &&\text{for $2 \le d \le 4$}.\\
    \left|\frac{\partial \psi_1}{\partial x_2}\right| & \le \frac{(d-1)^{d-1}}{d^{d-1}(1-c)} &&\text{for $5 \le d \le 7$.} \label{psi1x2}
    \end{align}
    \end{lemma}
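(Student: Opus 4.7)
My plan is to prove each of the four bounds by a monotonicity analysis of the explicit expressions for the partial derivatives given in \eqref{eq:dpsi_1_x_1}--\eqref{eq:dpsi_1_x_2}, over the rectangle $(x_1,x_2) \in [a,b]\times[0,c]$, invoking the properties \eqref{eq:abc_prop1}--\eqref{eq:abc_prop3} of $(a,b,c)$ exactly where they are needed.

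For $|\partial \psi_1/\partial x_1|$, set $\hat{h}(x_1,x_2) := (1-x_2)(1+x_1+x_1x_2)^{d-1}/(1+2x_1)^{d+1}$. A direct log-derivative computation gives
\[
\partial_{x_1} \log \hat{h} = \frac{-(d+3) + (d-1)x_2 - 4(1+x_2)x_1}{(1+x_1+x_1x_2)(1+2x_1)},
\]
whose numerator is bounded above by $(d-1)c - (d+3) < 0$ (trivial since $c < 1$), so $\hat h$ is strictly decreasing in $x_1$ and the supremum on $[a,b]\times[0,c]$ reduces to $x_1 = a$. It then suffices to bound $h(x_2) := (1-x_2)(1+a+ax_2)^{d-1}$ on $[0,c]$, and a short calculation gives $h'(x_2) \propto (d-2)a - 1 - dax_2$. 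Under the first alternative of \eqref{eq:abc_prop1} (i.e.\ $(d-2)a \le 1$, for $2 \le d \le 5$), this quantity is $\le 0$ throughout $[0,c]$, so $h(x_2) \le h(0) = (1+a)^{d-1}$; under the second alternative ($(d-2)a \ge 1+dac$, for $d=6,7$), it is $\ge 0$ throughout, so $h(x_2) \le h(c) = (1-c)(1+a+ac)^{d-1}$. Dividing by $(1+2a)^{d+1}$ and multiplying by $d$ yields the two claimed bounds.

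For $|\partial \psi_1/\partial x_2|$, set $g(x_1,x_2) := x_1(1+x_1+x_1x_2)^{d-1}/(1+2x_1)^d$. The function is visibly increasing in $x_2$, so $g(x_1,x_2) \le g(x_1,c)$. A parallel log-derivative computation produces
\[
\partial_{x_1} \log g(x_1,c) = \frac{1 - x_1(d(1-c)-2)}{x_1(1+x_1(1+c))(1+2x_1)},
\]
so $g(\cdot,c)$ is increasing on $(0, x_1^*)$ and decreasing on $(x_1^*, \infty)$, where $x_1^* := 1/(d(1-c)-2)$. For $2 \le d \le 4$, property \eqref{eq:abc_prop2} gives $b \le x_1^*$, so the sup of $g$ on $[a,b]\times[0,c]$ equals $g(b,c) = b(1+b+bc)^{d-1}/(1+2b)^d$, yielding the first $\partial_{x_2}$ bound. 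For $5 \le d \le 7$, I would discard the interval restriction on $x_1$ and simply evaluate $g$ at the unrestricted maximizer $x_1^*$: the identities $1+x_1^*(1+c) = (d-1)(1-c)/(d(1-c)-2)$ and $1+2x_1^* = d(1-c)/(d(1-c)-2)$ make the calculation telescope to the clean value $g(x_1^*,c) = (d-1)^{d-1}/(d^d(1-c))$, and multiplying by $d$ yields the stated bound.

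The only real work is the two log-derivative simplifications and the telescoping evaluation of $g$ at $x_1^*$; everything else is a straightforward sign analysis guided by the parameter properties of $(a,b,c)$. The main subtlety is matching each of the four stated bounds with the correct monotonicity regime: two for $|\partial \psi_1/\partial x_1|$ keyed to whether $(d-2)a$ lies below $1$ or above $1+dac$ (so that the maximizer of $h$ on $[0,c]$ is the left or right endpoint), and two for $|\partial \psi_1/\partial x_2|$ keyed to whether the unrestricted maximizer $x_1^*$ of $g(\cdot,c)$ lies to the right of $b$ or not.
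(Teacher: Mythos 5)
Your proposal is correct and follows essentially the same route as the paper: first isolate the $x_1$-dependence via monotonicity, then analyze the resulting one-variable function of $x_2$ (resp.\ $x_1$) by locating its critical point and using the three numerical properties of $(a,b,c)$ to decide whether the maximum on the relevant interval is at an endpoint or at the critical point. The paper invokes \eqref{eq:f'} where you compute the log-derivative directly, and it leaves the telescoping evaluation $\tilde h(\tilde x_*) = (d-1)^{d-1}/(d^{d-1}(1-c))$ implicit where you spell it out; these are cosmetic differences. One small correction: for $d=2$ the critical point $x_1^* = 1/(d(1-c)-2)$ is \emph{negative} (since $d(1-c)-2 = -2c < 0$), so your claim that ``property \eqref{eq:abc_prop2} gives $b \le x_1^*$'' does not hold as stated in that case. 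What actually happens for $d=2$ is that the log-derivative numerator $1 - x_1(d(1-c)-2) = 1 + 2cx_1$ is strictly positive for all $x_1 \ge 0$, so $g(\cdot,c)$ is increasing on the entire positive axis and the maximum on $[a,b]$ is at $x_1 = b$ without reference to $x_1^*$. The conclusion is unchanged, but the $d=2$ case should be split off, as the paper does.
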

    
\begin{proof}
Recall the expression for $|\partial \psi_1/\partial x_1|$ from~\eqref{eq:dpsi_1_x_1}. Since this expression is decreasing in $x_1$ (using \eqref{eq:f'}) we have
    \begin{equation}\label{eq:dpsi_1_x_1_bound}
    \left|\frac{\partial \psi_1}{\partial x_1}\right| \le \frac{d(1-x_2)(1+a+ax_2)^{d-1}}{(1+2a)^{d+1}} =: h(x_2) .
    \end{equation}
The function $h(x)$ is maximized at $$x_* := \frac{(d-2)a-1}{da}.$$
  By~\eqref{eq:abc_prop1}, we have that $x_*<0$ for $2 \le d \le 5$ and $x_*>c$ for $d=6,7$.
  Note that $h$ is increasing on $(-\infty,x_*]$ and decreasing on $[x_*,\infty)$.
  Overall, $h$ is decreasing on $[0,c]$ for $2 \le d \le 5$ and increasing on $[0,c]$ for $d=6,7$. 
    Thus, $|\partial \psi_1/\partial x_1| \le h(0)$ for $2 \le d \le 5$ and $|\partial \psi_1/\partial x_1| \le h(c)$ for $d=6,7$. This establishes the claimed bound on $|\partial \psi_1/\partial x_1|$.


    Next, recall the expression for $|\partial \psi_1/\partial x_2|$ from~\eqref{eq:dpsi_1_x_2}. Since this expression is increasing in $x_2$, 
    $$
    \left|\frac{\partial \psi_1}{\partial x_2}\right|  \le \frac{dx_1(1+x_1+cx_1)^{d-1}}{(1+2x_1)^d} =: \tilde h(x_1) .
    $$
Differentiating $\tilde h$, we get 
    $$
    \tilde h'(x) =\frac{d(1+x+cx)^{d-2}}{(1+2x)^{d+1}} [x(2-(1-c)d) +1].
    $$
 If $d=2$, then clearly $\tilde h'(x) \ge 0$ for $x \ge 0$, so that $\tilde h(x_1) \le \tilde h(b)$. For $d \ge 3$, we have a critical point at 
 $$
 \tilde x_* := \frac1{(1-c)d-2}.
 $$
 Note that $2-(1-c)d < 0$ since $c<1/2$. Thus, $\tilde x_*$ is positive and is in fact a maximum of $\tilde h$. Furthermore, $\tilde x_*>b$ for $d=3,4$ by~\eqref{eq:abc_prop2}. Thus, $\tilde h(x_1) \le \tilde h(b)$ for $d \le 4$, and $\tilde h(x_1) \le \tilde h(\tilde x_*)$ for $d \ge 5$. This establishes the claimed bound on $|\partial \psi_1/\partial x_2|$.
\end{proof}

\begin{lemma}\label{lem:derivative_nge2}
    Let $x \in \cS_{a,b,c}$ with $(a,b,c)$ as in \Cref{table:abc_final_section2}.
For $n \ge 2$,
    \begin{align}
    \left|\frac{\partial \psi_n}{\partial x_{n-1}}\right| &\le \frac{4d(d-1)^{d-1}(1+c+c^2)^d}{(d+1)^{d+1}(1+c)^{d-1}} &&\text{for $d=2$},\label{psinxnminus1_d2}\\
    \left|\frac{\partial \psi_n}{\partial x_{n-1}}\right| & \le \frac{db^{d-1}(1+c+c^2)^d}{(1+b)(1+b+bc)^{d}} &&\text{for $3 \le d \le 7$,}\\
    \left|\frac{\partial \psi_n}{\partial x_n}\right| & \le \left|\frac{\partial \psi_n}{\partial x_{n-1}}\right| \cdot  b(1+c+bc).\label{psinxn} \\
    \left|\frac{\partial \psi_n}{\partial x_{n+1}}\right| &\le \frac{db^d c(1+c+c^2)^{d-1}}{(1+b+bc)^d}.\label{psinxnplus1}
\end{align}
\end{lemma}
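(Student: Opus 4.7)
The plan is direct computation with the explicit derivative formulas~\eqref{eq:dpsi_n_x_n-1}--\eqref{eq:dpsi_n_x_n+1}, followed by calculus optimization over the three relevant variables. Throughout, for $n\ge 2$ and $x\in \cS_{a,b,c}$, we have $x_{n-1}\le b$ (for $n=2$ this is because $x_1\le b$; for $n\ge 3$ it is because $x_{n-1}\le c\le b$) and $x_n,x_{n+1}\le c$. So we need to maximize each derivative over $x_{n-1}\in[0,b]$ and $x_n,x_{n+1}\in[0,c]$.

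For the bound on $|\partial\psi_n/\partial x_{n-1}|$, I would proceed in three stages. The expression~\eqref{eq:dpsi_n_x_n-1} is increasing in $x_{n+1}$ (it appears only in the numerator), so set $x_{n+1}=c$. Next, viewing the result as a function of $x_n$, a derivative calculation reduces the sign of the derivative to that of $dp+q(dp-d-1)-pqx_n$ where $p=1+c$ and $q=x_{n-1}$. Using that $c$ is small and that $q\le b$ from \Cref{table:abc_final_section2}, this is positive on $[0,c]$, so the max is at $x_n=c$, producing the factor $(1+c+c^2)^d/(1+x_{n-1}(1+c))^{d+1}$. Finally, optimize $g(y):=y^{d-1}/(1+(1+c)y)^{d+1}$ over $y\in[0,b]$; its unique critical point is $y^\ast=(d-1)/(2(1+c))$. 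For $d=2$, $y^\ast=1/(2(1+c))\le 1/2\le b$, so the interior maximum gives $g(y^\ast)=4(d-1)^{d-1}/((d+1)^{d+1}(1+c)^{d-1})$, yielding~\eqref{psinxnminus1_d2} after multiplying by $d(1+c+c^2)^d$. For $3\le d\le 7$, $y^\ast\ge (d-1)/3\ge 2/3\ge b$, so the max is at $y=b$, giving $db^{d-1}(1+c+c^2)^d/(1+b+bc)^{d+1}$, which we then loosen by the trivial $(1+b+bc)^{d+1}\ge(1+b)(1+b+bc)^d$ to obtain the stated form.

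The bound~\eqref{psinxn} on $|\partial\psi_n/\partial x_n|$ avoids a separate optimization by taking the ratio
\[
\frac{\partial\psi_n/\partial x_n}{\partial\psi_n/\partial x_{n-1}} = \frac{x_{n-1}(1+x_{n-1}x_{n+1}+x_{n+1})}{1+x_n+x_nx_{n+1}},
\]
which is bounded above by $b(1+bc+c)=b(1+c+bc)$ using $x_{n-1}\le b$, $x_{n+1}\le c$, and $1+x_n+x_nx_{n+1}\ge 1$. The bound~\eqref{psinxnplus1} on $|\partial\psi_n/\partial x_{n+1}|$ proceeds analogously: monotonicity in $x_{n+1}$ in~\eqref{eq:dpsi_n_x_n+1} sets $x_{n+1}=c$; the factor $(x_{n-1}/(1+x_{n-1}(1+x_n)))^d$ is increasing in $x_{n-1}$ because $u/(1+\alpha u)$ is increasing, so max at $x_{n-1}=b$; and finally one checks by direct differentiation that $v\mapsto v(1+(1+c)v)^{d-1}/(1+b+bv)^d$ is increasing on $[0,c]$ — the numerator of its derivative simplifies to $(1+\beta v)^{d-2}(1+b+bv)^{d-1}\cdot[1+b+v(b+d(1+c+bc))]$, which is manifestly positive — so max at $v=c$, yielding the claimed bound.

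The main obstacle is the monotonicity check in $x_n$ for the first bound: it is the only step where the sign of the derivative is not obvious from inspection, and one must use the specific smallness of $c$ relative to $b$ and $d$ to push the critical point of $h(x_n)$ past $c$. All other optimizations reduce to short sign computations that are uniform across $d\in\{2,\dots,7\}$ and across the parameter ranges in \Cref{table:abc_final_section2}.
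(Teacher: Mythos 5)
Your proof is correct and arrives at all four bounds, but it takes a slightly different route from the paper for the $\partial\psi_n/\partial x_{n-1}$ bound. The paper first optimizes over $x_{n-1}$ alone, keeping $x_n$ symbolic via $h(y)=y^{d-1}/(1+\beta y)^{d+1}$ with $\beta=1+x_n$; the critical point $y^\ast=(d-1)/(2\beta)$ then exceeds $b$ for $d\ge 3$ by property~\eqref{eq:abc_prop3}, and only afterwards do they bound $x_n,x_{n+1}\le c$ via the factorization $(1+b+bx_n)^{d+1}\ge(1+b)(1+b+bx_n)^d$ and~\eqref{eq:f'}. You instead freeze $x_{n+1}=c$ and then $x_n=c$ before touching $x_{n-1}$, which requires an extra monotonicity-in-$x_n$ check — your reduction of its sign to $dp+q(dp-d-1)-pqx_n$ is correct, and it is indeed positive for the table values (e.g.\ for $d=7$ the worst case is $7.07-.93\cdot.27-1.01\cdot.27\cdot.01>0$), though you should state the numerical verification rather than gesture at $c$ being small. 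The payoff of your ordering is that the subsequent single-variable optimization is cleaner (one fixed $\beta=1+c$), and you actually obtain the tighter intermediate bound $db^{d-1}(1+c+c^2)^d/(1+b+bc)^{d+1}$ before loosening it; the paper's ordering avoids the $x_n$-monotonicity computation entirely. For the other two bounds the two arguments coincide — the ratio trick for $\partial\psi_n/\partial x_n$ is identical, and for $\partial\psi_n/\partial x_{n+1}$ your explicit factorization of $\phi'(v)$, with bracket $1+b+v(b+d(1+c+bc))$, makes precise the monotonicity step that the paper compresses into a citation of~\eqref{eq:f'}.
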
  

We remark that it is easy to show that any $x \in \psi(\cS_{a,b,c})$ satisfies $x_n \le c^d$ for $n \ge 3$, and that one could use this extra information to improve the bounds given in the lemma. This did not seem, however, to simplify other parts of the proof and so we did not find any reason to do this. Furthermore, the bounds we stated are uniform in $n \ge 2$ and governed by the case $n=2$; the bounds can be significantly improved as $n$ increases.

\begin{proof}
    Set $\beta := 1+x_n$, and for $x \ge 0$, define
    $$
    h(x) := \frac{x^{d-1}}{(1+\beta x)^{d+1}} ,
    $$
    so that
    \[ \left|\frac{\partial \psi_n}{\partial x_{n-1}}\right| = h(x_{n-1}) \cdot d(1+x_n+x_nx_{n+1})^d .\]
    Note that $h$ is maximized at $x_* := \frac{d-1}{2\beta}$, and that it is increasing on $[0,x_*]$ and decreasing on $[x_*,\infty)$.
    For $d \ge 3$, we have that $x_{n-1} \le b \le x_*$ by~\eqref{eq:abc_prop3}, and hence, $h(x_{n-1}) \le h(b)$ and
    \begin{align}
        \left|\frac{\partial \psi_n}{\partial x_{n-1}}\right| & \le \frac{db^{d-1}(1+x_n+x_nx_{n+1})^d}{(1+b+bx_n)^{d+1}} \\
        & \le \frac{db^{d-1}(1+c+c^2)^d}{(1+b)(1+b+bc)^{d}},
    \end{align}
    where we used $1+b+bx_n \ge 1+b$, $x_{n+1} \le c$ and \eqref{eq:f'}.
    For $d=2$, we do not know whether $x_* \ge b$, so instead we just use that $h(x_{n-1}) \le h(x_*)$ and plug in the value of $x_*$ to get that
    \begin{align}
        \left|\frac{\partial \psi_n}{\partial x_{n-1}}\right| & \le \frac{d\left(\frac{d-1}{2(1+x_n)}\right)^{d-1}(1+x_n+x_nx_{n+1})^d}{\left(1+\frac{d-1}{2}\right)^{d+1}}\\
        & = \frac{4d(d-1)^{d-1} (1+x_n+x_nx_{n+1})^d}{(d+1)^{d+1} (1+x_n)^{d-1}}\nonumber\\
        & \le \frac{4d(d-1)^{d-1} (1+c+c^2)^d}{(d+1)^{d+1} (1+c)^{d-1}},
    \end{align}
    where we used $x_{n+1} \le c$ and \eqref{eq:f'}.

    To upper bound $\partial \psi_n/\partial x_n$, note that
\[ \left|\frac{\partial \psi_n}{\partial x_n}\right| = \left|\frac{\partial \psi_n}{\partial x_{n-1}}\right| \cdot \frac{x_{n-1}(x_{n-1}x_{n+1} + x_{n+1}+1)}{x_nx_{n+1}+x_n+1} ,\]
    and then, for the second term on the right-hand side, lower bound the denominator by 1 and upper bound the numerator using $x_{n-1} \le b$ and $x_{n+1} \le c$. Finally, 
    \begin{align}
        \left|\frac{\partial \psi_n}{\partial x_{n+1}}\right| &= \frac{dx_{n-1}^d x_n(1+x_n+x_nx_{n+1})^{d-1}}{(1+x_{n-1}+x_{n-1}x_n)^d} \\
        & \le \frac{db^d x_n(1+x_n+x_nx_{n+1})^{d-1}}{(1+b+bx_n)^d}\\
        & \le \frac{db^d c(1+c+c^2)^{d-1}}{(1+b+bc)^d},
    \end{align}
    where in the first inequality we used that $|\partial \psi_n/\partial x_{n+1}|$ is increasing in $x_{n-1}$ and that $x_{n-1} \le b$, and in the second inequality we used $x_n,x_{n+1} \le c$ and \eqref{eq:f'}.
\end{proof}

\begin{table}[h]
    \centering
    \scalebox{1.1}{
    \begin{tabular}{|c|c|c|c|c|c|c|}
    \hline
  $d$   &2&3&4&5&6&7  \\
  \hline
   $\frac{\partial \psi_1}{\partial x_1}$       &.38  & .57 &  .84 & .86 & .97 & .986 \\ \hline
   $\frac{\partial \psi_1}{\partial x_2}$       &.46  & .51 &  .47 & .46 & .45 & .401 \\ \hline
   $\frac{\partial \psi_n}{\partial x_{n-1}}$   &.43  & .26 &  .09 & .03 & .01 & .001 \\ \hline
   $\frac{\partial \psi_n}{\partial x_{n}}$     &.43  & .21 & .06 & .02  & .01 & .001 \\ \hline
   $\frac{\partial \psi_n}{\partial x_{n+1}}$   &.10  & .04  & .01  & .01 & .01 & .001 \\ \hline
   $\|D\psi_x\|$ & .96 & .83 & .95 & .89 & .98 & .987 \\ \hline
\end{tabular}
}
\caption{Upper bounds on the partial derivatives and operator norm when $x \in \cS_{a,b,c}$ with $(a,b,c)$ as in \Cref{table:abc_final_section2}. The bounds on the partial derivatives are obtained by plugging in the values of $a,b,c$ into the bounds given by \Cref{lem:derivative12,lem:derivative_nge2} ($n \ge 2$ in the table) and rounding up. The bounds on the operator norm are then obtained by plugging in the former bounds into the bounds from \cref{lem:op_norm}.}\label{table:abc_derivative_approx}
\end{table}

\begin{proof}[Proof of \cref{prop:op_norm_estimate}]
    Using \Cref{lem:derivative12,lem:derivative_nge2}, we get upper bounds on the partial derivatives $|\partial \psi_i/\partial x_j|$, which when combined with \Cref{lem:op_norm} yields an upper bound on the operator norm $\|D\psi_x\|$ (see \Cref{table:abc_derivative_approx}).
\end{proof}
\begin{proof}[Proof of \Cref{thm:main_lipschitz}]
The proof of the nonconvergence part is in \Cref{sec:nonconvergence}. The convergence part follows from the more general \Cref{thm:main_conv2}.
\end{proof}




\section{Long-range order for graphs with large expansion}\label{sec:long_range}

In this section, we prove \Cref{thm:main_general}.
Recall that $G$ is an infinite bipartite connect graph with maximum degree $d$ and Cheeger constant $h$. We fix a finite subset $G'$ of $G$. Recall that $\nu^{a,b}_{G',M}$ is the uniform measure on all $M$-Lipschitz functions on $G' \cup \partial G'$ whose values on $\partial G'$ are constrained to be in $\{a,\ldots,b\}$ for white vertices or in $\{b-M,\dots,a+M\}$ for black vertices.

We now fix two integers $a,b$ with $0 \le b-a \le 2M$.  To facilitate the description of these Gibbs measures, we shall consider the following sets:
\begin{align*}
    \cO(f)&:= \{v \in \T_{2n}^d \text{ is odd}, f(v) \not \in \{ b-M, \ldots, a+M\}\},\\
    \cE(f)&:= \{v \in \T_{2n}^d \text{ is even}, f(v) \not \in \{a,\ldots,b\}\}.
\end{align*}
and define $B(f):= \cO(f) \cup \cE(f)$. Let $A(f,v)$ denote the distance two connected component of $B(f)$ containing $v$. To elaborate: connect two vertices if they are at distance at most two, and let $A(f,v)$ denote the connected component of $v$ thus formed.
Our goal in the next proposition will be to prove that the size of $A(f,v)$ has exponential tail once $h$ is much larger compared to $M$.

\begin{prop}\label{prop:cluster_exp_tail}
Assume that $h \ge 4M\log(d^4(4M+1))$ and let $f \sim \nu^{a,b}_{G',M}$. Then for all $v_0 \in G'$ and $n \ge 1$,
    $$
     \P(|A(f,v_0)|  =  n) \le \exp\left(-\frac{hn}{4M}\right).
    $$   
\end{prop}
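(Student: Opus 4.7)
The plan is a Peierls-type argument in the spirit of \cite{PWA_tree, PWY_expander}. I would bound $\P(A(f,v_0) = A)$ uniformly over distance-$2$ connected candidate sets $A \ni v_0$ with $|A| = n$, and then union-bound over $A$. First, since the distance-$2$ graph on $G$ has maximum degree at most $d^2$, a standard tree-encoding of connected subsets bounds the number of distance-$2$ connected subsets of size $n$ containing $v_0$ by $(ed^2)^n \le d^{4n}$.

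For a fixed $A$ I would classify configurations $f$ with $A(f,v_0) = A$ by a \emph{type} $(\sigma, k)$, where $\sigma(v) \in \{-1,+1\}$ records on which side of the parity-dependent allowed interval $f(v)$ lies, and $k(v) \in \{1,\ldots,2M\}$ records the size of the violation. The bound $k(v) \le 2M$ follows from the $M$-Lipschitz property together with the fact that every $v \in A$ is within distance $\le 2$ of some vertex whose $f$-value already lies in the allowed interval; this uses the maximality of $A$ as a distance-$2$ component of $B(f)$, which forces $\partial A \cap B(f) = \varnothing$. There are at most $(4M)^n$ possible types. For each type $(\sigma,k)$ I would define
\[
T(f)(v) := \begin{cases} f(v) - \sigma(v)\, k(v) & v \in A, \\ f(v) & v \notin A, \end{cases}
\]
which produces an $M$-Lipschitz function with no violations on $A$; preservation of the Lipschitz property across edges incident to $A$ uses the compatibility $b-a \le 2M$ of the two allowed intervals. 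The map $T$ is injective on each type, but injectivity alone is not enough. The next step is to couple $T(f)$ with a family of independent local $M$-Lipschitz modifications supported near $\partial A$, each yielding a distinct $M$-Lipschitz function agreeing with $T(f)$ far from $A$. Assuming one produces at least $C(M)^{|\partial A|}$ such configurations for some $C(M) > 1$ depending on $M$, the Cheeger estimate $|\partial A| \ge h|A| = hn$ combined with the type count gives
\[
\P(A(f,v_0) = A) \le (4M)^n \cdot C(M)^{-hn}.
\]
Summing over the at most $d^{4n}$ candidate sets and invoking the hypothesis $h \ge 4M\log(d^4(4M+1))$ yields the claimed bound $\exp(-hn/(4M))$ after a short algebraic computation.

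The main obstacle is the construction of the witness family on $\partial A$: one must exhibit enough genuinely distinct $M$-Lipschitz configurations compatible with $T(f)$ while simultaneously respecting the boundary condition on $\partial G'$ and all Lipschitz constraints across $\partial A$. This is where the symmetric structure of the two parity-dependent allowed intervals $\{a,\ldots,b\}$ and $\{b-M,\ldots,a+M\}$ is essential, and where the hypothesis $b-a \le 2M$ is crucial -- it is precisely the condition ensuring that both intervals are nonempty and that the shift on the boundary can be realized compatibly. The verification that $T(f)$ itself is $M$-Lipschitz across $\partial A$ is a secondary, more routine check, again relying on the same compatibility of intervals.
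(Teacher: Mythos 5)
Your high-level plan---bound $\P(A(f,v_0)=A)$ uniformly over distance-$2$ connected candidates $A$, use the Cheeger inequality $|\partial A|\ge h|A|$ to extract exponential gain, then union-bound over the at most $d^{4n}$ sets $A$---is exactly the Peierls framework of the paper's proof. However, two of the concrete steps do not survive scrutiny. First, the claim that the violation size satisfies $k(v)\le 2M$ for every $v\in A$ is false: the stated justification (every $v\in A$ is within distance $2$ of a vertex whose value lies in the allowed interval) only holds for $v$ near $\partial A$, not for $v$ deep inside a large cluster $A$ whose distance-$\le 2$ neighbourhood may lie entirely in $A$. For example, with $M=2$, $a=b=0$, along a path $v_1-u_1-v_2-u_2-\cdots$ starting with a good boundary vertex, the values can escalate as $f(u_1)=4$, $f(v_2)=6$, $f(u_2)=8$, giving a violation of $8$ at $u_2$ even though $2M=4$. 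Consequently the type set $\{-1,+1\}\times\{1,\ldots,2M\}$ does not encode all possibilities, and the count $(4M)^n$ of types (and the injectivity of $T$ restricted to each type) breaks down. Second, and more fundamentally, the entire source of the exponential gain---the construction of a family of $C(M)^{|\partial A|}$ distinct witness configurations supported near $\partial A$, with control of overlap across different starting $f$'s---is left as an unproved assumption; you acknowledge it is ``the main obstacle,'' but without it the argument only yields $\P(A(f,v_0)=A)\le 1$ per type, which is useless.

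The paper sidesteps both difficulties at once with the $s/t$ counting lemma (as in \cite[Lemma 1.8]{PWY_expander}). Rather than a single-valued map plus a separate witness family, $T(g)$ is defined to be the \emph{set} of all $M$-Lipschitz functions agreeing with $g$ off $A\cup\partial A$ and taking arbitrary values in the parity-correct interval on $A\cup\partial A$; this makes $t=\prod_{v\in A\cup\partial A}(\ell_v+1)$ explicit, with $\ell_v\in\{b-a,\,2M+a-b\}$. The upper bound $s$ is obtained not from a type encoding but from a sequential reconstruction argument: given $g'\in T(g)$, the values of $g$ on $A$ are recovered layer by layer from $(A\cup\partial A)^c$ inward, giving at most $4M+1$ choices per vertex regardless of how large the actual violation at that vertex is; then for $v\in\partial A$ the adjacency to a bad vertex in $A$ forces $g(v)$ into a strictly smaller set of size $\ell_v<\ell_v+1$. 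The ratio $s/t\le(4M+1)^{|A|}\bigl(\tfrac{2M}{2M+1}\bigr)^{|\partial A|}$ then delivers exactly the per-boundary-vertex gain your ``$C(M)$'' was supposed to supply, with $C(M)=1+\tfrac1{2M}$, and it does so without ever needing a bound on individual violation sizes. If you want to salvage your route, you would need to replace the flawed type encoding with a sequential-reconstruction count for $s$ and actually carry out the boundary-enlargement count for $t$; at that point you would essentially have rediscovered the paper's argument.
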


 The upshot of \Cref{prop:cluster_exp_tail} is that most of the odd vertices take values in $\{b-M,\ldots,a+M\}$
and most of the even vertices take values in $\{a,\ldots,b\}$. 

Let us illustrate some special cases. If $a=b=0$ and $M=1$ (1-Lipschitz case with 0 boundary conditions on even vertices), most of the even vertices take the value 0 and the odd vertices take values in $\{-1,0,1\}$ (roughly uniformly at random as they are likely to be surrounded by 0s). Also, if $a=0,b=1$ (as in \Cref{thm:main_lipschitz_FKG}), then both even and odd vertices are likely to take values in $\{0,1\}$. In general, if $b-a = M$, the even and odd vertices play a symmetric role in the measure (and in fact bipartiteness of $G$ is not needed).

\begin{proof}[Proof of \Cref{prop:cluster_exp_tail}]
    We begin with an elementary fact (see, e.g., \cite[Lemma 1.8]{PWY_expander}). Let $\cL$ denote the set of $M$-Lipschitz functions on $G'$ satisfying the required boundary constraints. Let $\cE$ be a subset of $\cL$ and suppose that $T$ is a map from $\cE$ to subsets of $\cL$. Also assume that there exist $s,t>0$ such that $|T(g)| \ge t$ for all $g \in \cE$, and $|\{g \in \cA: g' \in T(g) \}| \le s$ for all $g' \in \cL$. Then $$ \P(f \in \cE) = \frac{|\cE|}{|\cL|} \le \frac st.$$

    Throughout, we write $A(f)$ for $A(f,v_0)$ and observe that $A(f)$ cannot contain any vertex of $\partial G'$ by definition. Fix a distance two connected set $A$ containing $v_0$ but not intersecting $\partial G'$ and let $\cE := \{g \in \cL: A(g)=A\}$. We shall construct a map $T$ as described in the previous paragraph so that $s/t$ is small, where $s,t$ are as in the previous paragraph. Consequently, $$\P(A(f) =A) = \P(f \in \cE) \le \frac st.$$ 
    Our goal is to make $\frac st$ exponentially small in the size of $A$ with a suitably large constant, so that we can then apply a union bound over all possible realizations of $A(f)$ and conclude.

    Let us describe the map $T$. For $g \in \cE$, let $T(g) $ be the set of $g' \in \cL$ satisfying
    \begin{equation}\label{eq:f'}
     g'(x) 
     \begin{cases}
         = g(x) &\text{if }x \not \in A \cup \partial A\\
         \in \{a,\dots,b\} &\text{if }x \in A \cup \partial A, x \text{ even,}\\
         \in \{ b-M,\dots,a+M\} &\text{if } x \in A \cup \partial A, x \text{ odd.}\\
     \end{cases}
    \end{equation}
    The point of this definition is that, as we will see, it allows much more possibilities in $A \cup \partial A$ for $g' \in T(g)$ compared to~$g$. Specifically, we claim $T$ satisfies the requirement with
    \begin{equation}\label{eq:bound_s}
        s := (4M+1)^{|A|} (b-a)^{|\{v \in  \partial A: v \text{ odd}|}(a-b+2M)^{|\{v \in  \partial A: v \text{ even}|},
    \end{equation}
    and
    \begin{equation}\label{eq:bound_t}
        t := (b-a+1)^{|\{v \in  A \cup \partial A: v \text{ odd}|}(a-b+2M+1)^{|\{v \in  A \cup \partial A: v \text{ even}|}.
    \end{equation}
    Before proving this claim, let us show how it yields the proposition.
    We have
  \begin{equation*}
      \frac{s}{t} \le (4M+1)^{|A|} \left(\frac{b-a}{b-a+1}\right)^{|\{v \in  \partial A, v \text{ odd}|}\left(\frac{a-b+2M}{a-b+2M+1}\right)^{|\{v \in  \partial A, v \text{ even}|}
  \end{equation*}
   Since $x \mapsto \frac{x}{x+1}$ is increasing and $0\le b-a \le 2M$, and using the fact that $|\partial A| \ge h |A|$ (since recall that $A$ cannot contain any vertex in the boundary ),
  \begin{equation}
      \P(A(f) = A) \le \frac{s}{t} \le (4M+1)^{|A|} \left(\frac{2M}{2M+1}\right)^{|\partial A|} \le e^{|A| \log (4M+1) - \frac{h|A|}{2M}}\label{eq:connected_component}
   \end{equation}
   Finally, the number of distance two connected sets $A$ with $|A|=n$ and $v_0 \in A$ is at most $d^{4n}$. Union bounding, we get
   $$
   \P(|A(f)| = n) \le  e^{n(4 \log d +  \log (4M+1) - \frac{h}{2M})} \le \exp\left(- \frac{hn}{4M}\right),
   $$
   as desired.

    It remains to prove that $T$ satisfies the required upper and lower bounds with~\eqref{eq:bound_s} and~\eqref{eq:bound_t}.
    We begin with the lower bound. Let $g \in \cE$.
    Since $|x-y| \le M$ for $x \in \{a,\dots,b\}$ and $y \in \{b-M,\dots,a+M\}$, and since $g(v) \in \{a,\dots,b\}$ for any even $v \in \partial(A \cup \partial A)$ and $g(v) \in \{b-M,\dots,a+M\}$ for any odd $v \in \partial(A \cup \partial A)$, we see that every function satisfying~\eqref{eq:f'} must necessarily belong to $\cL$. Hence,
  $$
  |T(g)| = (b-a+1)^{|\{v \in  A \cup \partial A: v \text{ odd}|}(a-b+2M+1)^{|\{v \in  A \cup \partial A: v \text{ even}|} = t.
  $$
    We now move on to the upper bound. Fix $g' \in \cL$. We need to upper bound the number of $g \in \cE$ such that $g' \in T(g)$. Clearly, $g$ is uniquely determined outside $A \cup \partial A$. Note that if a vertex $u$ in $A$ is at distance 2 from the complement of $A \cup \partial A$, then the number of possible values of $g(u)$ is at most $4M+1$. By choosing the values of $g$ on $A$ one after another, in any order consistent with their distance from the complement of $A \cup \partial A$, we see that there are at most $(4M+1)^{|A|}$ possible choices for $g$ on $A$. To show that $|\{ g\in \cE : g' \in T(g)\}| \le s$, it remains to show that, given $g$ outside $\partial A$, the number of choices for $g$ on $\partial A$ is at most $\prod_{v \in \partial A} \ell_v$, where
    $$
l_v:= \begin{cases} b-a &\text{if $v$ is even }\\ 2M+a-b &\text{if $v$ is odd} \end{cases} .
$$
    For this, it suffices to prove a vertex-by-vertex bound, namely, that the number of choices for $g(v)$ is at most $\ell_v$ for any $v \in \partial A$. To this end, fix $v \in \partial A$, and let $u \in A$ be any neighbor of $v$ (which exists since $v \in \partial A$).
    Suppose first that $v$ is even.    
    First note that since $v \in \partial A$ and $v$ is even, $g(v) \in \{a,\ldots,b\}$.
    Similarly, since $u \in A$ and $u$ is odd, $g(u) \notin \{b-M,\dots,a+M\}$.
    Consequently,
    \begin{align*}
    a+1 \le g(u) - M  \le g(v) \le b, \qquad\text{if $g(u) \ge a+M+1$},\\
    a \le g(v) \le g(u)+M \le b-1, \qquad\text{if $g(u) \le b-M-1$.}
    \end{align*}
    (One consequence of the above inequalities is that $v$ cannot be even if $a=b$.)
    Therefore, the number of possible values of $g(v)$ is at most $\ell_v$, as claimed.
    Suppose next that $v$ is odd.
    We similarly have that $g(v) \in \{b-M,\dots,a+M\}$ and $g(u) \notin \{a,\dots,b\}$, so that
    \begin{align*}
        b-M+1 \le g(u) - M \le g(v) \le a+M \qquad\text{if $g(u) \ge b+1$, }\\
         b-M \le g(v) \le g(u)+M \le a+M-1 \qquad\text{if $g(u) \le a-1$.}
    \end{align*}
    (Again, $v$ cannot be odd if $b-a=2M$.)
    Therefore, the number of possible values of $g(v)$ is again at most $\ell_v$, as claimed.
   \end{proof}
   
The following corollary is a simple generalization of \eqref{eq:connected_component}, and is what we will actually need in the proof of \Cref{thm:main_general}.
   \begin{corollary}\label{cor:generalization_exp_tail}
       Let $A_1 ,A_2,\ldots, A_k$ be disjoint two connected sets of total size $n := \sum_{i=1}^k|A_i|$. Then the probability that $A_1,\ldots, A_k$ are distance two connected components of $B(f)$ is at most 
       $$
       \exp\left(n\big(\log(4M+1)  - \tfrac h{2M}\big)\right).
       $$
   \end{corollary}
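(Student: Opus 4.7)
The plan is to rerun the argument from the proof of \Cref{prop:cluster_exp_tail} essentially verbatim, but with the single set $A$ replaced by the union $\mathbf{A} := \bigcup_{i=1}^k A_i$. The one ingredient that needs to be verified (and which is the whole content of the corollary) is that the $A_i$'s being distance-two connected components of $B(f)$ forces the sets $A_i \cup \partial A_i$ to be pairwise disjoint, so that all the counting and the isoperimetric input go through additively.

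First I would define the event $\cE := \{g \in \cL : A_1,\dots,A_k \text{ are distance-two connected components of } B(g)\}$ and introduce the same map $T \colon \cE \to 2^{\cL}$ as in the proof of \Cref{prop:cluster_exp_tail}, but now modifying the configuration on $\mathbf{A} \cup \partial \mathbf{A}$: for $g \in \cE$ set $T(g)$ to be the collection of $g' \in \cL$ that agree with $g$ off of $\mathbf{A} \cup \partial \mathbf{A}$ and take arbitrary values in $\{a,\dots,b\}$ on even vertices and in $\{b-M,\dots,a+M\}$ on odd vertices of $\mathbf{A}\cup \partial\mathbf{A}$.

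The key structural step is to check the following two facts, which together say that the $A_i \cup \partial A_i$ behave like $k$ independent copies of the setup in \Cref{prop:cluster_exp_tail}. (i) For $i \ne j$, $\partial A_i \cap A_j = \emptyset$: a vertex in $\partial A_i$ lies at distance one from some vertex of $A_i$, so if it were in $A_j$ this would give two vertices of different components at distance one, contradicting distance-two connectedness of $B(f)$. (ii) For $i \ne j$, $\partial A_i \cap \partial A_j = \emptyset$: a common boundary vertex would have one neighbour in $A_i$ and one in $A_j$, placing those two neighbours at distance two and again merging the components. Consequently $\partial \mathbf{A} = \bigsqcup_i \partial A_i$, and from the Cheeger inequality applied to the finite set $\mathbf{A}$ we obtain $\sum_i |\partial A_i| = |\partial \mathbf{A}| \ge h |\mathbf{A}| = hn$.

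With these disjointness facts, the lower bound $|T(g)| \ge t$ and the upper bound $|\{g \in \cE : g' \in T(g)\}| \le s$ derived in the proof of \Cref{prop:cluster_exp_tail} extend immediately by taking products over all $A_i \cup \partial A_i$ (using that each boundary vertex still has a neighbour outside $\mathbf{A} \cup \partial \mathbf{A}$ with a fixed good value, so the vertex-by-vertex bound $\ell_v$ still applies). This yields
\[
\P(\cE) \le \frac{s}{t} \le (4M+1)^{n} \left(\frac{2M}{2M+1}\right)^{|\partial \mathbf{A}|} \le (4M+1)^n \left(\frac{2M}{2M+1}\right)^{hn},
\]
and finally bounding $\log((2M+1)/(2M)) \ge 1/(2M)$ (as done in \Cref{prop:cluster_exp_tail}) gives the stated $\exp(n(\log(4M+1) - h/(2M)))$. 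I do not anticipate a serious obstacle here; the only subtlety is the disjointness claim (i)-(ii) above, and that is forced by the very definition of distance-two components.
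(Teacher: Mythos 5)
Your proof is correct and follows the same route the paper implicitly intends: the paper states the corollary as a "simple generalization" of the single-component bound~\eqref{eq:connected_component} and gives no further proof, and your proposal is precisely that generalization, replacing $A$ by $\mathbf{A}=\bigcup_i A_i$ in the Peierls/switching argument. One small remark: the disjointness claims (i)--(ii), while correct (and trivially holding whenever the event has positive probability), are not actually needed for the bound -- the Cheeger inequality gives $|\partial\mathbf{A}|\ge h|\mathbf{A}|=hn$ directly, and the vertex-by-vertex count on $\partial\mathbf{A}$ only uses that each $v\in\partial\mathbf{A}$ has a neighbour in some $A_i$ and is itself outside $B(f)$ (the latter being automatic since otherwise $v$ would lie in that component).
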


\begin{proof}[Proof of \Cref{thm:main_general}]
    Let $(G_n)$ be an exhaustion.
    Note that it is enough to prove the following: for all $v \in V$, $R>0$ and for all connected $H,H'$ such that $B(v,R) \subset B(v,R') \subset H\subset H' \subset G$, $\nu_{H,M}^{a,b}$ can be coupled with $\nu_{H',M}^{a,b}$ so that they agree on $B(v, R)$ with probability tending to $1$ as $R' \to \infty$.  
    
    To construct the coupling, let $f \sim \nu_{H, M}^{a,b}$ and $f' \sim \nu_{H',M}^{a,b}$ be sampled independently and let $B(f)$ and $B(f')$ be the set of `atypical' vertices as defined before \Cref{prop:cluster_exp_tail}. Let $U = B(v,R)$ and let $A(f,f',U)$ be the union of the distance two connected components of $B(f) \cup B(f')$ intersecting $U$. Then for any $A$, using \Cref{cor:generalization_exp_tail},
    \begin{align*}
    \P(A(f,f',U) = A) &\le \sum_{A_1 \cup A_2 = A} \exp((|A_1| +|A_2|)(\log (4M+1) - \tfrac h{2M})) \\ &\le 3^{|A|} \exp(|A|(\log (4M+1) - \tfrac h{2M})) ).
    \end{align*}
    Since there are at most $(2k)^{|U|}d^{4k}$ possible sets of size $k$ containing $U$,
    $$
    \P(|A(f,f')| = k) \le \exp\left(k (\log (3(4M+1)) - \tfrac h{2M} + 4\log d) + |U|\log (2k)\right) \le \exp(-\tfrac{hk}{4M} + |U|\log(2k)).
    $$
    Since $U$ is fixed, this bound decays exponentially in $k$.
    In particular, as $R' \to \infty$,
    $$
    \P(A(f,f',U) \subset B(v,R')) \to 1.
    $$

    It remains to show that $f$ and $f'$ can be coupled so that they agree on $B(v,R)$ on the event $\{(A(f,f',U) \subset B(v,R')\}$. We proceed via exploration from the boundary.
    Let $X$ be the union of the distance two connected components of $B(f) \cup B(f') \cup (G \setminus H)$ containing $G \setminus H$. Note that, by definition of $A(f,f',U)$, we have that $X$ is disjoint from $B(v,R+2)$ whenever $A(f,f',U) \subset B(v,R'-2)$. Denote $\tilde X := X \cup \partial X$, $\bar X := \tilde X \cup \partial \tilde X$ and $Y := G \setminus \bar X$. Note that $X$ is measurable with respect to $(B(f) \cup B(f')) \cap \bar X$.
    Now condition on $X$ and on $(f|_{\tilde X},f'|_{\tilde X})$ and suppose that $X$ is disjoint from $B(v,R+2)$. We claim that $f|_Y$ and $f'|_Y$ are conditionally distributed according to $\nu^{a,b}_{Y,M}$ (in fact, they are also conditionally independent, but we do not need this), and in particular, $f|_Y$ and $f'|_Y$ can be coupled to agree on $Y$ (which contains $B(v,R)$). 
    We prove this for $f|_Y$ (the proof for $f'_Y$ is identical).
    Indeed, every even $v \in \partial Y$ has $f(v) \in \{a,\dots,b\}$, since $\partial Y \subset \partial \tilde X \subset (B(f) \cup B(f'))^c$. On the other hand, any value in $\{a,\dots,b\}$ is possible for $f(v)$, since any $u \in N(v) \setminus Y \subset \bar X \setminus X \subset (B(f) \cup B(f'))^c$, is odd, and satisfies $f(u) \in \{b-M,\dots,a+M\}$.
    Similarly, any odd vertex in $\partial Y$ must take a values in $\{b-M, \ldots, a+M\}$ and is free to take any value there. Since there are no other constraints (beyond the obvious Lipschitz constraint), the conditional law of $f|_Y$ is $\nu^{a,b}_{Y,M}$. This completes the proof of convergence.

Now we show part (b) of the theorem, which is an easy consequence of \Cref{prop:cluster_exp_tail}.
Let $f \sim \nu^{a,b}_{G,M}$. Fix an odd vertex $u$ and denote $\cN := \{ f(v) : v \in N(u) \}$. Let $\cN^-$ and $\cN^+$ be the minimum and maximum elements in $\cN$, respectively.
Since $f(u)$ is conditionally uniformly distributed in $\{\cN^+-M,\dots,\cN^-+M\}$ given $\cN$, we see that
\begin{align*}
 \P(\{\cN^-,\dots,\cN^+\} \subsetneq \{a,\dots,b\})
  &\le (2M+1) \cdot \P(f(u) \notin \{b-M,\dots,a+M\}) \\
  &\le (2M+1) \cdot \P(|A(f,u)|\ge1)) \\&\le \frac{2M+1}{e^{h/4M}-1} .
\end{align*}
On the other hand,
\begin{align*}
 \P(\{\cN^-,\dots,\cN^+\} \not\subset \{a,\dots,b\})
  &= \P(\cN^- < a\text{ or }\cN^+>b) \\
  &= \P(f(v) \notin \{a,\dots,b\}\text{ for some }v \in N(u)) \\
  &\le \sum_{v \in N(u)} \P(|A(f,v)|\ge1)) \\&\le \frac{d}{e^{h/4M}-1} .
\end{align*}
Together, we get that
\[ \P(\{\cN^-,\dots,\cN^+\} \neq \{a,\dots,b\}) \le \frac{d+2M+1}{e^{h/4M}-1} \le \frac{d+2M+1}{3d^4(4M+1)-1} < \frac12 ,\]
which means that $\{a,\dots,b\}$ is the largest atom of $\{\cN^-,\dots,\cN^+\}$, so that it is uniquely determined by $\nu^{a,b}_{G,M}$.
\end{proof}

\section{FKG}\label{sec:FKG}
In this section, we prove \Cref{thm:main_lipschitz_FKG}. 
We establish certain monotonicity condition for uniform Lipschitz functions and the absolute value of them. Such results have also been obtained independently by Karrila \cite{karrila2023logarithmic}.

Let $G = (V,E)$ be a finite graph and let $\partial$ be a subset of its vertices which we call the \emph{boundary} of $G$. We will work with a slightly more general boundary condition in this section, where the constraints can vary over the vertices of $\partial$. A \emph{boundary condition} is an assignment $\{\kappa_v\}_{v \in \partial} $ where $\kappa_v \subset \Z$. Let $\cL(G,\partial,\kappa)$ be the set of Lipschitz functions $f$ on $G$ having $f(v) \in \kappa_v$ for all $v \in \partial$.
We will only consider cases where $|\cL(G, \partial, \kappa)| \in (0,\infty)$, and we will call such boundary conditions \emph{admissible}. For an admissible $\kappa$, let $\nu_G^{\partial, \kappa}$ denote the law of a uniform element drawn from $\cL (G,\partial,\kappa)$. 

A function $f:\Z^V \mapsto \R$ is increasing if for any $h,h' $ such that $h_v \le h'_v$ for all $v \in V$, $f(h) \le f(h')$. We denote by $\nu(f)$ the expectation of $f$ under the law $\nu$.
\begin{prop} (FKG for $h$)\label{prop:FKGh}
Take a finite graph $G=(V,E)$ with boundary $\partial $. Suppose $\kappa, \kappa'$ are two admissible boundary conditions such that for every $v \in \partial$, $\kappa_v  = [a_v,b_v]$ and $\kappa'_v = [a'_v,b'_v]$ and $a_v \le a'_v$, $b_v \le b'_v$ (these integers could be $\pm \infty$). Then 
\begin{description}
\item[(CBC)] For every increasing function $f$, $\nu_G^{\partial, \kappa'} (f) \ge \nu_G^{\partial, \kappa}(f) $.
\item[(FKG)] For two increasing functions $f$, $g$,  $\nu_G^{\partial, \kappa} (fg) \ge \nu_G^{\partial, \kappa} (f) \nu_G^{\partial, \kappa} (g) $.
\end{description}
\end{prop}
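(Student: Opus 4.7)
My plan is to deduce both claims from the Holley/FKG lattice framework by verifying that $\cL(G,\partial,\kappa)$ is a sublattice of $\Z^V$ under coordinate-wise maximum and minimum. The key combinatorial fact is that if $f,g$ are Lipschitz on $G$, so are $f\vee g$ and $f\wedge g$: for any $u\sim v$, by symmetry I may assume $(f\vee g)(u)=f(u)$, and then
\[
(f\vee g)(u)-(f\vee g)(v) \;=\; f(u)-\max(f(v),g(v)) \;\le\; f(u)-f(v) \;\le\; 1,
\]
with the lower bound obtained by the same argument; the case of $f\wedge g$ is identical. Since the boundary set $\kappa_v=[a_v,b_v]$ is an interval, it is closed under max and min, so the boundary constraints are preserved and $\cL(G,\partial,\kappa)$ is indeed a sublattice.

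For (FKG), the uniform measure $\nu_G^{\partial,\kappa}$ on a sublattice trivially satisfies the FKG lattice condition $\nu(x\vee y)\,\nu(x\wedge y)\ge \nu(x)\,\nu(y)$: when both $x,y$ lie in the sublattice all four quantities equal $1/|\cL(G,\partial,\kappa)|$ and equality holds, while otherwise the right-hand side vanishes. Positive association then follows from the Ahlswede--Daykin/FKG theorem.

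For (CBC), I will invoke Holley's theorem applied to $\nu_1:=\nu_G^{\partial,\kappa'}$ and $\nu_2:=\nu_G^{\partial,\kappa}$. Since both measures are uniform on their supports, the required condition $\nu_1(x\vee y)\,\nu_2(x\wedge y)\ge \nu_1(x)\,\nu_2(y)$ reduces to checking that whenever $x\in \cL(G,\partial,\kappa')$ and $y\in \cL(G,\partial,\kappa)$ one has $x\vee y\in \cL(G,\partial,\kappa')$ and $x\wedge y\in \cL(G,\partial,\kappa)$. The Lipschitz property was already handled; on the boundary, using $a_v\le a'_v$ and $b_v\le b'_v$ one obtains $\max(x(v),y(v))\ge x(v)\ge a'_v$ and $\max(x(v),y(v))\le \max(b'_v,b_v)=b'_v$, and symmetrically $\min(x(v),y(v))\in [a_v,b_v]$.

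I do not anticipate a substantial obstacle, since the proof is entirely lattice-theoretic and the verifications are elementary. The only step worth flagging is the cross-lattice condition for Holley's theorem, where it is crucial that \emph{both} endpoints of the boundary intervals move weakly upward in passing from $\kappa$ to $\kappa'$; this hypothesis is what makes $x\vee y$ stay in $\cL(G,\partial,\kappa')$ and $x\wedge y$ stay in $\cL(G,\partial,\kappa)$ simultaneously.
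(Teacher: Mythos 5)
Your proof is correct, and it takes a genuinely different route from the paper. The paper verifies Holley's criterion through its \emph{local} (single-site) formulation: conditioning on the values at all vertices but one, it checks that the conditional law at that vertex is stochastically monotone in the conditioning configuration and in the boundary interval. Your argument is \emph{global}: you first show $\cL(G,\partial,\kappa)$ is a sublattice of $\Z^V$ (via the neat observation that the Lipschitz constraint is preserved under pointwise $\vee$ and $\wedge$, together with the intervals $[a_v,b_v]$ being closed under $\max,\min$), then read off the FKG lattice condition trivially for the uniform measure, and verify the two-measure Holley hypothesis for CBC by the cross-lattice containment $x\vee y\in\cL(\kappa')$, $x\wedge y\in\cL(\kappa)$. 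Both approaches are standard ways to apply the Holley--FKG machinery; yours is cleaner and more self-contained, whereas the paper's local argument is the one that generalizes more directly to the shifted absolute-value setting of \cref{prop:FKG_modh}, where no sublattice structure is available. One small caveat you should flag: the textbook statements of the FKG and Holley theorems (including Grimmett's, which the paper cites) are usually given for \emph{strictly positive} weights on the ambient finite distributive lattice, whereas your uniform measures vanish off the sublattice $\cL(G,\partial,\kappa)$. This is harmless — either invoke the version for measures supported on a sublattice, or perturb by adding a small $\varepsilon$ mass everywhere and let $\varepsilon\to 0$ — but it is worth a sentence so the reader does not balk at the degenerate case in the lattice condition.
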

\begin{proof}
Both claims follow from Holley's criterion (\cite[Theorem 2.3]{grimmett2006random}). Fix a vertex $v$ and condition on $\{\xi_v: v \in V \setminus \{v\}\}$ in $\cL (G,\partial,\kappa)$. Now note that $\max \{|\xi_u  - \xi_{v}|: u \sim v\}$ is in $\{0,1,2\}$. If it is $2$, then the conditional value at $v$ is deterministic. If it is $1$, then the height at $v$ takes two values with equal probability. Finally if it is $0$, then the height at $v$ takes three values with equal probability. From this observation, it is easy to see that conditioning on $\{\xi'_v: v \in V \setminus \{v\}\}$ with $\xi' \ge \xi$ stochastically increases the height at $h$.
\end{proof}

A useful tool is monotonicity of the pushforward of the law of $\cL (G,\partial,\kappa)$ under the mapping $h \mapsto |h|$. Unfortunately it is not true directly as can be seen in the segment graph with three vertices with two vertices having degree one and one having degree 2 (call it $v$). If the the height is $0$ on the neighbours of $v$, then the conditional law of the height at $v$ is Bernoulli $(2/3)$. On the other hand conditioning one of absolute value heights of the neighbours to be 1 makes the conditional law at $v$ Bernoulli $(1/2)$.

It turns out that a different form of absolute value FKG holds, if we do the mapping $h \mapsto |h+0.5|$. Let $\cL^{>0}(G,\partial,\kappa)$ denote the set of $1$-Lipschitz functions taking values in $\N -0.5:= \{0.5,1.5, \ldots \}$. We also need to be careful with the boundary conditions. We say an admissible boundary condition $\kappa$ is $|h|$-adapted \footnote{Although we shift by $0.5$, we keep this notation, which hopefully is not a source of confusion.} if we can break up $\partial $ into $\partial  = \partial _{\rm pos} \sqcup \partial_{\rm sym}$ such that 
\begin{itemize}
\item $\kappa_v = -\kappa_v$ for every $v \in \partial_{\rm sym}$,
\item $\kappa_v \subseteq \N+0.5 := \{1.5,2.5,\ldots\}$ for every $v \in \partial_{\rm pos}$.
\end{itemize}
For every $\xi \in \cL^{>0}(G,\partial,\kappa)$, connect two vertices $u,v$ by an edge if $\max\{\xi_u, \xi_v \} >0.5$. Let $k(\xi)$ denote the number of connected components which do not intersect $B_{\rm pos}$. Then clearly for any $|h|$-adapted boundary condition $\kappa$,
\begin{equation}
\nu_G^{\partial, \kappa} (|h+0.5| = \xi) = \frac{2^{k(\xi)}}{|\cL(G, \partial, \kappa)|}, \qquad \xi \in \cL^{>0}(G, \partial, \kappa).\label{eq:modh_law}
\end{equation}
\begin{prop}[monotonicity for $|h+0.5|$]\label{prop:FKG_modh}
    Take a finite graph $G=(V,E)$ with boundary $\partial $. Let $\kappa, \kappa'$ be two $|h|$-adapted boundary conditions with $\partial_{\rm pos}(\kappa) \subseteq \partial_{\rm pos}(\kappa')$ and for every $v\in \partial$, $[a_v,b_v]:=\kappa_v\cap\{0.5,1.5,\ldots\} $ and $[a'_v,b'_v]:=\kappa'_v\cap \{0.5,1.5,\ldots\}$ satisfy $a_v\le a'_v$ and $b_v\le b'_v$ (these numbers could be $\infty$),
\begin{description}
 \item[(CBC)] For every increasing function $f$, 
 $\nu^{\partial, \kappa'}_G[f(|h+0.5|)]\ge\nu^{\partial, \kappa}_G[f(|h+0.5|)]$;
 \item[(FKG)] For any two increasing functions $f,g$, $\nu^ {\partial, \kappa}_G[f(|h+0.5|)g(|h+0.5|)] \ge   \nu^ {\partial, \kappa}_G[f(|h+0.5|)]  \nu^ {\partial, \kappa}_G[g(|h+0.5|)]$.
 \end{description}
 \end{prop}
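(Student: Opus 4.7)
The plan is to apply Holley's criterion to the pushforward measures $\mu^{\kappa}(\xi):=\nu_G^{\partial,\kappa}(|h+0.5|=\xi)\propto 2^{k(\xi)}$ on $\cL^{>0}(G,\partial,\kappa)$ given by~\eqref{eq:modh_law}: (FKG) then reduces to the FKG lattice condition for a single $\mu^{\kappa}$, while (CBC) reduces to the two-measure Holley inequality between $\mu^{\kappa'}$ and $\mu^{\kappa}$. The ambient lattice structure is fine: $\cL^{>0}(G,\partial,\kappa)$ is closed under pointwise $\vee,\wedge$ (1-Lipschitz is preserved, and each $\kappa_v\cap\{0.5,1.5,\ldots\}$ is an interval), and the hypotheses $a_v\le a'_v$, $b_v\le b'_v$, and $\partial_{\rm pos}(\kappa)\subseteq\partial_{\rm pos}(\kappa')$ ensure that $\xi\vee\eta\in\cL^{>0}(G,\partial,\kappa')$ and $\xi\wedge\eta\in\cL^{>0}(G,\partial,\kappa)$ whenever $\xi\in\cL^{>0}(G,\partial,\kappa')$ and $\eta\in\cL^{>0}(G,\partial,\kappa)$.

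The heart of the argument is to prove the supermodularity of $k$:
\[
k(\xi\vee\eta)+k(\xi\wedge\eta)\ge k(\xi)+k(\eta).
\]
Writing $E_\zeta:=\{(u,v)\in E:\max(\zeta_u,\zeta_v)>0.5\}$ for the edge set of $G_\zeta$, I would first observe directly from the definitions that
\[
E_{\xi\vee\eta}=E_\xi\cup E_\eta \qquad\text{and}\qquad E_{\xi\wedge\eta}\subseteq E_\xi\cap E_\eta,
\]
where the second inclusion holds because $\max(\min(\xi_u,\eta_u),\min(\xi_v,\eta_v))>0.5$ forces one of the two mins, say $\min(\xi_u,\eta_u)$, to exceed $0.5$, and hence both $\max(\xi_u,\xi_v)>0.5$ and $\max(\eta_u,\eta_v)>0.5$ (equality can fail, e.g.\ for $\xi=(1.5,0.5)$ and $\eta=(0.5,1.5)$ on a single edge). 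Contracting $\partial_{\rm pos}$ to a single vertex and letting $c_*(E)$ denote the component count of this contracted graph with edge set $E$, one has $c_*(E_\zeta)=k(\zeta)+1$. The classical supermodularity of $c_*$ in the edge set (equivalently, submodularity of the cycle matroid rank) together with monotonicity $c_*(E')\ge c_*(E)$ for $E'\subseteq E$ then give
\[
c_*(E_{\xi\vee\eta})+c_*(E_{\xi\wedge\eta})\ \ge\ c_*(E_\xi\cup E_\eta)+c_*(E_\xi\cap E_\eta)\ \ge\ c_*(E_\xi)+c_*(E_\eta),
\]
and subtracting $2$ yields the claimed supermodularity of $k$. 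This is precisely the FKG lattice condition for $\mu^{\kappa}$, so Holley gives (FKG).

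For (CBC), Holley's criterion asks for the mixed inequality $k'(\xi\vee\eta)+k(\xi\wedge\eta)\ge k'(\xi)+k(\eta)$, where $k'$ is defined using $\partial_{\rm pos}(\kappa')$ in place of $\partial_{\rm pos}(\kappa)$. Setting $D:=\partial_{\rm pos}(\kappa')\setminus\partial_{\rm pos}(\kappa)$ and $X(\zeta):=k(\zeta)-k'(\zeta)$, which counts the components of $G_\zeta$ meeting $D$ but not $\partial_{\rm pos}(\kappa)$, the inequality rearranges to
\[
\bigl[k(\xi\vee\eta)+k(\xi\wedge\eta)-k(\xi)-k(\eta)\bigr]+\bigl[X(\xi)-X(\xi\vee\eta)\bigr]\ge 0.
\]
The first bracket is nonnegative by the supermodularity of $k$ just proved. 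For the second, since $E_\xi\subseteq E_{\xi\vee\eta}$, passing to $\xi\vee\eta$ only merges components, and any component of $G_{\xi\vee\eta}$ counted by $X(\xi\vee\eta)$ refines in $G_\xi$ into sub-components all avoiding $\partial_{\rm pos}(\kappa)$ at least one of which meets $D$, which yields an injection $X(\xi\vee\eta)\hookrightarrow X(\xi)$. Applying Holley's criterion to the pair $(\mu^{\kappa'},\mu^{\kappa})$ then gives (CBC). The main obstacles I expect are the strict inclusion $E_{\xi\wedge\eta}\subsetneq E_\xi\cap E_\eta$ (which rules out a purely symmetric argument and forces the use of the monotonicity of $c_*$) and the careful bookkeeping in the $X$-monotonicity step for (CBC); once these are in hand, the remainder is a routine application of Holley's criterion and of the standard submodularity of matroid rank.
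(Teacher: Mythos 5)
Your proof is correct and takes a genuinely different route from the paper's. The paper verifies the single-site (Glauber-type) form of Holley's criterion, as in \cite[Theorem~4.8]{georgii2001random}: it conditions on $|h+0.5|$ away from a fixed vertex $v$, computes the conditional law of $|h_v+0.5|$ explicitly in each of the cases ($\xi_u\ge 2.5$ for some neighbour; $\xi_u\le 1.5$ for all neighbours with all equal to $1.5$; and the main case where some neighbour has $\xi_u=0.5$), and then checks stochastic domination by comparing the quantities $k_v(\xi)$, $k_v(\eta)$ and the events $\cB(\kappa)$, $\cB(\kappa')$. You instead verify the global two-measure Holley lattice inequality directly. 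Since $\nu_G^{\partial,\kappa}(|h+0.5|=\xi)\propto 2^{k(\xi)}$, both (FKG) and (CBC) reduce to supermodularity-type statements for the component count $k$, which you establish by contracting $\partial_{\rm pos}$ to a single vertex and invoking the submodularity of the cycle-matroid rank together with monotonicity of the component count, using the exact identity $E_{\xi\vee\eta}=E_\xi\cup E_\eta$ and the (strict in general) inclusion $E_{\xi\wedge\eta}\subseteq E_\xi\cap E_\eta$; (CBC) then follows by isolating the surplus $X(\zeta)=k(\zeta)-k'(\zeta)$ and checking that $X$ is decreasing under merging of components. I verified all the key steps: the edge identities, the lattice-closure statements linking the supports under $\vee,\wedge$, the supermodularity chain of inequalities for $c_*$, and the $X$-monotonicity for (CBC) are all sound. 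Your approach trades the paper's elementary but somewhat fiddly case analysis at a single site for a cleaner global argument resting on standard matroid/graph submodularity; the paper's route in turn avoids any explicit matroid input and more directly parallels the proof in \cite[Proposition~2.2]{duminil2022logarithmic} that it emulates.
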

\begin{proof}
The proof is very similar to \cite[Proposition 2.2]{duminil2022logarithmic}. 
 
 Fix a vertex $v $. Using \cite[Theorem 4.8]{georgii2001random} and references therein, { it is sufficient to prove that for any $\xi$ (resp. $\eta$) which are restrictions to $V \setminus \{v\}$ of configurations  in $\cL^{> 0}(G,\partial,\kappa)$ (resp. $\cL^{> 0}(G,\partial,\kappa')$)  such that $\xi_v \le \eta_v$ for all $v$}, and every $k  \ge0$,
\begin{equation}
\nu_{G}^{\partial,\kappa}\big[|h_v+0.5| \ge k\big| |h_{|V\setminus\{v\}}+0.5|= \xi\big] \le \nu_{G}^{\partial,\kappa'}\big[|h_v+0.5| \ge k
 \big| |h+0.5|_{|V\setminus\{v\}}= \eta \big]\label{eq:Holley_cond}.
 \end{equation}
If $\xi_u  \ge 2.5$ for some neighbour $u$ of $v$, or if $\xi_u = 1.5$ for all neighbours $u$ of $v$, then it is straightforward to see using \eqref{eq:modh_law} that \eqref{eq:Holley_cond} holds, as in this case we are simply reduced to the analysis in \cref{prop:FKGh} (the quantity $k(\xi)$ in \eqref{eq:modh_law} does not change whatever be the value of $|h_v+0.5|$, consequently $|h_v+0.5|$ is equally likely among all possibilities dictated by the Lipchitz constraint or the constraint imposed by $\kappa, \kappa'$ in case $v$ is a boundary vertex). 

Now suppose $\xi_u \le  1.5$ for every neighbour of $v$, and $\xi=0.5$ for at least one of the neighbours of $v$ (whence $|h_v+0.5| \in \{0.5,1.5\}$). Notice that all the neighbours $u$ with $\xi_u = 1.5$ must belong to the same component in $\xi$, call it $N_v(\xi)$. {If $v \in \partial$ and $\{0.5,1.5\} \not \subseteq \kappa_v$ then the value of $|h+0.5| $ at $v$ is deterministically equal to $\kappa_v \cap \{0.5,1.5\}$.} Otherwise, let $k'_v(\xi)$ denote the number of components of $k(\xi)$ containing at least one of the neighbours $u$ of $v$ with $\xi_u =0.5$ and which do not intersect $\partial_{\rm pos}(\kappa)$. If $N_v(\xi)$ does not intersect $\partial_{\rm pos}(\kappa)$, let $k_v(\xi) = k'_v(\xi)+1$, otherwise $k_v(\xi) = k'_v(\xi)$. Let $\cB(\kappa)$ denote the event that at least one of the components of $k(\xi)$ containing at least one of the neighbours of $v$ intersects $\partial_{\rm pos}(\kappa)$.
\begin{equation*}
\nu_{G}^{\partial,\kappa}\big[|h_v+0.5| =0.5\big| |h+0.5|_{|V\setminus\{v\}}= \xi\big] =
\begin{cases}
0 \text{ if }v \in \partial_{\rm pos}\\
 2^{k_v(\xi)} /(2 + 2^{k_v(\xi)}) \text{ if  $\xi \notin \cB(\kappa)$, $v \notin \partial_{\rm pos}$}\\
 2^{k_v(\xi)} /(1 + 2^{k_v(\xi)})\text{ if  $\xi \in \cB(\kappa)$, $v \notin \partial_{\rm pos}$}
\end{cases}
\end{equation*}
The same formula holds for $\eta$ as well. Now observe that $k_v(\xi) \ge k_v(\eta)$ since $\eta _v \ge \xi_v$ for all $v$ and $\partial_{\rm pos}(\kappa) \subseteq \partial_{\rm pos}(\kappa')$. If there is strict inequality $k_v(\xi) > k_v(\eta)$ then the above expression for $\xi$ is at least that for $\eta$ in all the cases by monotonicity of $x \mapsto x/(1+x)$ and $x\mapsto x/(2+x)$ and the fact that $2^{k}/(2+2^k)\ge 2^{k'}/(1+2^{k'}) $ if $k\ge k'+1$. So assume $k_v(\xi) = k_v(\eta)$. If $\xi, \eta$ are both in $\cB(\kappa)$ or both not in $\cB(\kappa)$, we are also done for similar reasons. 
The only case remaining is if $\eta \in \cB(\kappa')$ but $\xi \notin \cB(\kappa)$ (recall $\cB(\kappa) \subseteq \cB(\kappa')$) and $k_v(\xi) = k_v(\eta)$. We claim that this is impossible, as in this case  the strict inequality $k_v(\xi) > k_v(\eta)$ must hold. Indeed, in this case there must be a neighbour of $v$ whose cluster intersects $B_{\rm pos}(\kappa')$ and is not counted in $k_v(\eta)$, but every neighbour cluster is counted in $\xi$ as none of them intersect $B_{\rm pos}(\kappa)$ on the event $\cB(\kappa)$.\ 
\end{proof}
\begin{corollary}\label{cor:dichotomy}
Let $G$ be an infinite graph with a fixed vertex $\rho$. Let $(G_n)_{n \ge 1}$ be a sequence such that $G_n \subseteq G_{n+1}$ and $\cup G_n = G$. Suppose $\partial_n$ be a subset of vertices of $G_n$ such that $\partial_{n+1} \cap G_n \subseteq \partial_n$, and the distance between $\partial_n$ and $\rho$ converges to $\infty$.  Let $\kappa^n_v = \{-0.5,0.5\}$ for all $v \in \partial_n$. Then one of the following two cases hold.
\begin{description}
\item[(Localization)] $\nu^{G_n}_{\partial_n, \kappa^n}$ converges to a translation-invariant Gibbs measure supported on Lipschitz functions on $G$.
\item [(Delocalization)]$\lim_{m \to \infty} \sup_{n  }\mu^{G_n}_{\partial_n, \kappa^n}(|h(\rho)| > m) >0$.
\end{description}

\end{corollary}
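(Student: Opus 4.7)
My plan is to prove the dichotomy by establishing stochastic monotonicity of $|h+0.5|$ in $n$ via \Cref{prop:FKG_modh}, then combining this with the tightness hypothesis (i.e.\ absence of delocalization) to obtain convergence of the $|h+0.5|$-marginals, and finally recovering the full law of $h$ from them via the $h \mapsto -h$ symmetry of the boundary condition.

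Concretely, for $m \le n$ and any finite $S \subset V(G_m)$, I aim to show that the joint law of $(|h(v)+0.5|)_{v \in S}$ under $\nu^{G_n}_{\partial_n,\kappa^n}$ stochastically dominates that under $\nu^{G_m}_{\partial_m,\kappa^m}$. To prove this, I would compare both measures on the common graph $G_n$ with the enlarged boundary $\partial_m \cup \partial_n$: viewed there, $\nu^{G_n}_{\partial_n,\kappa^n}$ corresponds to $[a_v,b_v] = [0.5,\infty)$ on $\partial_m$ (no extra pinning), whereas the auxiliary $\mu_{m,n} := \nu^{G_n}_{\partial_m \cup \partial_n, \{-0.5,0.5\}}$ has $[a_v,b_v] = [0.5,0.5]$ there, so the CBC part of \Cref{prop:FKG_modh} yields that $|h+0.5|$ under $\mu_{m,n}$ is stochastically dominated by $|h+0.5|$ under $\nu^{G_n}_{\partial_n,\kappa^n}$. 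Next, assuming $\partial_m$ separates $G_m^\circ$ from $G_n \setminus G_m^\circ$ (as is natural for an exhaustion), the marginal of $\mu_{m,n}$ on $G_m$ equals $\nu^{G_m}_{\partial_m,\kappa^m}$ reweighted by the partition function $N(h|_{\partial_m})$ of the complementary region, which is even in the boundary values by the $h \mapsto -h$ symmetry. Combining this symmetry with the FKG positive association (the other half of \Cref{prop:FKG_modh}) should transfer the stochastic order down to $\nu^{G_m}_{\partial_m,\kappa^m}$. With monotonicity in hand, the tightness assumption at $\rho$ propagates to any finite $S$ by the $1$-Lipschitz constraint, and monotone convergence gives weak convergence of $(|h(v)+0.5|)_{v \in S}$.

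To lift convergence of $|h+0.5|$ to convergence of $h$, I would use that $h$ is determined by $|h+0.5|$ together with a sign on each distance-two connected component of $\{v : |h(v)+0.5| \ge 1\}$, and the $h \mapsto -h$ symmetry of each $\nu^{G_n}_{\partial_n,\kappa^n}$ combined with the cluster formula \eqref{eq:modh_law} pins down the conditional law of those signs. The limit is then a Gibbs measure as a weak limit of finite-volume Gibbs measures, and translation invariance follows from the uniqueness of the limit together with the automorphism invariance of the construction (one can interleave translated exhaustions, as in the remark after \Cref{thm:main_general}). The hard part is the monotonicity step: $N(\phi)$ is only known to be even in $\phi$, not manifestly monotone, so the stochastic comparison of $\mu_{m,n}|_{G_m}$ against $\nu^{G_m}_{\partial_m,\kappa^m}$ does not come for free from FKG, and closing this gap will require a careful monotone coupling that pairs configurations $\phi$ with $-\phi$ using the lattice FKG structure of $|h+0.5|$.
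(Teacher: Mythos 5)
Your outline matches the paper's own (very brief) argument: first establish that $|h+0.5|$ is stochastically increasing in $n$ via \Cref{prop:FKG_modh}, then combine with tightness (the negation of delocalization) to get weak convergence of the $|h+0.5|$-marginals, and finally reconstruct $h$ from $|h+0.5|$ using the sign rule of \eqref{eq:modh_law}. However, your attempt to supply the monotonicity step contains the gap you yourself flag. Pinning only $\partial_m\cup\partial_n$ makes the $G_m$-marginal of $\mu_{m,n}$ equal to $\nu^{G_m}_{\partial_m,\kappa^m}$ reweighted by the exterior partition function $N(h|_{\partial_m})$, which is invariant under the global flip $\phi\mapsto-\phi$ but not constant, and neither the evenness of $N$ nor the FKG inequality by itself removes this weight. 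As written, the comparison between $\mu_{m,n}|_{G_m}$ and $\nu^{G_m}_{\partial_m,\kappa^m}$ is not established, so the monotonicity claim is left open.

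The repair is to pin a larger set, not to devise a cleverer coupling. Take as the common boundary $\partial:=\partial_m\cup(G_n\setminus G_m)$, noting $\partial_n\subseteq\partial$ because iterating the hypothesis $\partial_{k+1}\cap G_k\subseteq\partial_k$ gives $\partial_n\cap G_m\subseteq\partial_m$. Apply the CBC of \Cref{prop:FKG_modh} on $G_n$ with boundary $\partial$ to the pair $\kappa$ (all of $\partial$ pinned to $\{-0.5,0.5\}$) and $\kappa'$ (equal to $\kappa^n$ on $\partial_n$ and unconstrained on $\partial\setminus\partial_n$, so that $\nu^{G_n}_{\partial,\kappa'}=\nu^{G_n}_{\partial_n,\kappa^n}$); the positive parts are $[0.5,0.5]$ versus $[0.5,0.5]$ or $[0.5,\infty)$, so CBC gives that $|h+0.5|$ under $\nu^{G_n}_{\partial,\kappa}$ is dominated by $|h+0.5|$ under $\nu^{G_n}_{\partial_n,\kappa^n}$. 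The point you were missing is that the $G_m$-marginal of $\nu^{G_n}_{\partial,\kappa}$ is \emph{exactly} $\nu^{G_m}_{\partial_m,\kappa^m}$ with no reweighting: any $\{-0.5,0.5\}$-valued assignment on $G_n\setminus G_m$ is automatically $1$-Lipschitz and is compatible with any $\{-0.5,0.5\}$-valued data on $\partial_m$, so the number of admissible extensions is the constant $2^{|G_n\setminus G_m|}$ (this uses, as you do implicitly, that $\partial_m$ contains every vertex of $G_m$ adjacent to $G\setminus G_m$). Thus $\nu^{G_m}_{\partial_m,\kappa^m}$ is dominated by $\nu^{G_n}_{\partial_n,\kappa^n}$ restricted to $G_m$ in the $|h+0.5|$-order with no extra coupling needed, and the rest of your argument — tightness propagation, monotone convergence, sign reconstruction via \eqref{eq:modh_law}, and translation invariance by interleaving exhaustions — goes through as you describe.
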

\begin{proof}
Let $h^n \sim \mu^{G_n}_{\partial_n, \kappa^n}$.
Observe that $|h^n+0.5|$ is stochastically increasing in $n$, and hence if the delocalization condition does not hold, $\{|h^n+0.5|\}_{n \ge1}$ is tight. Consequently, weak limit of $|h^n+0.5|$ exists as a probability measure on $\Z^V$. Furthermore, observe that for each $n$, the height function $h^n+0.5$  is obtained by assigning a uniformly random sign to each connected component of $\xi$ as described in \Cref{eq:modh_law}. Thus this rule to obtain the height function from the absolute value also persists in the weak limit, and the proof of the corollary is complete.
\end{proof}

\begin{proof}[Proof of \Cref{thm:main_lipschitz_FKG}]
    Using \Cref{prop:FKGh}, $\mu_{n,d}^{\{0,1\}}$ is dominated by $\mu_{n,d}^{1}$ and dominates $\mu_{n,d}^{0}$, both of which are tight by \Cref{thm:main_lipschitz}. Thus $\mu_{n,d}^{\{0,1\}}$ is tight. Thus $\mu_{n,d}^{\{0,1\}}$ is in the localization phase of \Cref{cor:dichotomy}, and the result follows.
\end{proof}

\section{Discussion and open questions}\label{sec:open}
In this section, we briefly discuss some properties of the limiting measures $\mu^{S}_{d}$ for finite subsets $S \subset \Z$. Let us first show how to construct translation invariant Gibbs measures on $d$-regular trees rather than $d$-ary trees, the nice thing about the former is that they are vertex transitive. The convergence result for the $d$-regular tree actually follows from that of the $d$-ary tree in a straightforward manner: Let $\tilde f_n \sim \mu^S_{d,n}$ and let $\tilde f_n \sim \tilde \mu^S_{d,n}$ denote the analogous law for the $d+1$-regular tree. Then 
\begin{equation}
    \P(\tilde f_n(\rho) = k) = F_{d+1}(F_d^{(n-1)} (\delta_S))_k \nonumber
\end{equation}
where $F_d$ is the map $F$ for $d$-ary tree as described in \Cref{sec:apriori_bounds}.  Thus the limit as $n \to \infty$ exists for the $d+1$-regular tree exists whenever it exists for the $d$-ary tree tree.
Furthermore if $\pi^S_d:=\pi^S_d(\rho)$ denote the law of the height at $\rho$ under $\mu^{S}_d$, whenever it exists, then the corresponding law for the $d+1$-regular tree is given by $F_{d+1}(\pi_d^S)$.
It is also easy to check that the transition matrix for the tree indexed Markov chain  for the $d$-ary tree as well as the $d+1$ regular tree is given by 
$$
(P^S_d)_{i,j} = \frac{\pi_d^S(j)}{\pi_d^S(i-1)+\pi_d^S(i)+\pi_d^S(i+1)}1_{|j-i|\le 1}.
$$
and that $F_{d+1}(\pi^S_d)$ is the stationary measure for this transition matrix. Using reversibility of the chain, translation invariance of the limit for the $d+1$-regular tree is immediate.

\bigskip
We now speculate about the most general result possible for general boundary conditions in our setup.
We saw that for small $d$ convergence holds for all the cases we considered here. For large $d$ however, such convergence holds along the even sequences.
This leads us to conjecture:
\begin{conjecture}
    $\lim_{n \to \infty }\mu_{n,2}^S$ exists for all finite $S$. Also, $\lim_{n \to \infty }\mu_{2n,d}^S$ exists for all finite $S$, and for all $d \ge 2$.
\end{conjecture}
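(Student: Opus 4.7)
The plan is to adapt the ratio-map and contraction framework of Sections~2.3--2.5 in two complementary ways, treating the two halves of the conjecture in parallel.

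\textbf{Part (a): arbitrary finite boundary sets for $d = 2$.} The identity \Cref{lem:F} holds pointwise and does not require $S$ to be symmetric or contiguous. For general finite $S$, after finitely many applications of $F$ the distribution $F^{(n)}(\delta_S)$ has support equal to an interval of $\Z$, so one may set up a bi-infinite analogue of the ratio map: let $\cE'$ be the set of probability distributions on $\Z$ whose support is an interval, define $({\mathsf R}'z)_i := z_i/z_{i-1}$ for all $i \in \Z$ (with the convention $0/0 = 0$), and let $\psi' := {\mathsf R}' \circ F \circ ({\mathsf R}')^{-1}$ on the bi-infinite range $\cR'$. The coordinates of $\psi'$ at every $i \in \Z$ take the form~\eqref{eq:psi_n}; the distinguished formula~\eqref{eq:psi_1} no longer appears, since there is no reflection symmetry to cut the chain in half. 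The first step is a two-sided analogue of~\eqref{eq:good}: since the decay $(\psi'(x))_i \le c < 1$ propagates outward in the manner already proved there, after a bounded number of iterations the bi-infinite ratio sequence lies in a two-sided version of $\cS_{0,1,c}$. The second step is to locate an invariant basin $\cS'_{\hat a_*, \hat b_*, \hat c_*}$ and a bi-infinite version of the norm~\eqref{eq:norm} on which $\psi'$ contracts. For $d = 2$ the one-sided contraction margin in \Cref{table:abc_derivative_approx} is comfortable ($0.96$), and the partial-derivative bounds of \Cref{lem:derivative12,lem:derivative_nge2} follow from local monotonicity arguments that do not use symmetry essentially, so they should extend after mild bookkeeping. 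Once contraction of $\psi'$ is established, pointwise convergence of $(\psi')^{(k)}({\mathsf R}' \delta_S)$ follows by Banach's fixed-point theorem, and local weak convergence of $\mu^S_{n,2}$ follows as in the proof of \Cref{thm:main_conv2}.

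\textbf{Part (b): even subsequences for general $d$.} Only $d \ge 8$ is new, since for $d \le 7$ the claim reduces to \Cref{thm:main_lipschitz} (together with Part~(a) for asymmetric $S$ when $d = 2$). The plan is to replace $\psi$ by $\Psi := \psi \circ \psi$ (using the bi-infinite version $\psi'$ if $S$ is asymmetric) and run the same fixed-point strategy: locate a $\Psi$-invariant basin $\cS^{(2)}_{a_\sharp, b_\sharp, c_\sharp}$ and bound $\|D\Psi_x\|$ on it via the chain rule $D\Psi_x = D\psi_{\psi(x)} \circ D\psi_x$. The set $\cS_{0.4, 1, 0.01}$ from \Cref{prop:apriori_3} already supplies a $\Psi$-invariant region; one would then iterate $\varphi \circ \varphi$ inside $\cI$ to locate its fixed point in the even basin, tabulate a rigorous envelope analogous to \Cref{table:abc_final_section2}, and compute derivative bounds at both $x$ (in the even basin, where $x_1$ is of order one) and $\psi(x)$ (in the odd basin, where $\psi(x)_1$ is much smaller) in order to prove $\|D\Psi_x\| < 1$ uniformly on the basin.

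The main obstacle lies in Part (b). For $d \ge 8$ the map $\psi$ is expanding near each of its two period-two fixed points (see \Cref{Plots} and the proof of \Cref{prop:apriori_3}), so $\|D\psi\|$ exceeds~$1$ at each factor individually, and contraction of $\Psi$ can only come from genuine cancellation between the two Jacobians rather than from the naive product bound $\|D\Psi_x\| \le \|D\psi_{\psi(x)}\| \cdot \|D\psi_x\|$. One expects to exploit that in the even basin $x_1$ is of order one while in the odd basin $\psi(x)_1$ is very small, so that the dominant entries of the two Jacobians sit in different rows and columns and do not compound. Making this quantitative, presumably via a two-step adaptation of \Cref{lem:op_norm} that keeps track of the pattern of large entries across the composition rather than of row and column sums separately, is where the technical work will concentrate. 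In the limit $d \to \infty$ the two period-two fixed points essentially concentrate on disjoint coordinates, so the problem should become \emph{easier} for very large $d$, and the borderline case $d = 8$ should demand the most care.
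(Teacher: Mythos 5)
This statement is a \emph{conjecture} in the paper, not a theorem: the authors explicitly leave it open, remarking only that for $S=\{-k,\ldots,k\}$ and $d\ge 8$ they ``believe'' a more careful analysis similar to \Cref{sec:phi_estimate} ``should be enough'' to handle $\psi\circ\psi$. There is therefore no paper proof to compare against, and your submission is not a proof either but a research program with acknowledged gaps. You should flag this: presenting a roadmap as a proof of a stated open problem is a category error.

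The gaps are real. In Part~(a), the entire ratio-map apparatus of \Cref{sec:conv_non_conv} is built on symmetry: the formula~\eqref{eq:psi_1} for $\psi_1$ uses $z_{-1}=z_1$, the domain $\cE$ is defined as symmetric distributions, and the basin $\cS_{a,b,c}$ is parametrized relative to a symmetry center. For a generic asymmetric finite $S$, the law of $f_n(\rho)$ has no distinguished center, so a bi-infinite ratio sequence has no natural base coordinate, no analogue of the special formula~\eqref{eq:psi_1}, and no analogue of the norm~\eqref{eq:norm}. None of the derivative bounds in \Cref{lem:derivative12,lem:derivative_nge2}, the envelope \Cref{lem:S-abc-iteration}, or the fixed-point estimates of \Cref{sec:phi_estimate} have been reworked for the asymmetric setting, and asserting they ``should extend after mild bookkeeping'' is hope, not argument. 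It is telling that for the simplest asymmetric boundary, $S=\{0,1\}$, the paper abandons the ratio-map method entirely and instead proves \Cref{thm:main_lipschitz_FKG} via an unrelated absolute-value FKG argument; this suggests the authors themselves did not see a straightforward asymmetric extension.

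In Part~(b) you correctly identify the heart of the difficulty: for $d\ge 8$ each factor in $D(\psi\circ\psi)_x = D\psi_{\psi(x)}\circ D\psi_x$ has operator norm exceeding~$1$ (this is precisely the content of \Cref{prop:apriori_3} and \Cref{Plots}), so the submultiplicative bound fails outright and any contraction of $\Psi=\psi\circ\psi$ must come from genuine cancellation between the two Jacobians. You gesture at where such cancellation might originate (the dominant entries sit in different rows and columns across the two steps), but you neither carry out a two-step analogue of \Cref{lem:op_norm} nor produce the rigorous envelope estimates for the fixed point of $\varphi\circ\varphi$ that would replace \Cref{table:abc_final_section2}. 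In short, the plan is aligned with what the authors speculate might work, but the essential quantitative steps — exactly those that constitute the proof — are absent.
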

We believe that for $S = \{-k,\ldots, k\}$ and $d \ge 8$,  a more careful analysis similar to that in \Cref{sec:phi_estimate} should be enough to conclude the convergence of the map $\psi \circ \psi$.

Next comes the question of understanding all possible translation invariant measures we can get as limits of the above type.
To summarize the results of this article, we constructed the translation invariant Gibbs measures $\tilde \mu_d^{S}$, $S = \{-k,\ldots, k\}$ for $2 \le d \le 7$ and for $S=\{0,1\}$, $d \ge 2$. We also proved that for $2 \le d \le 7$, $\tilde \mu_d^{\{0,1\}}  \neq  \tilde \mu_d^0 = \tilde \mu_d^{\{-k,\ldots, k\}}$ for all $k \ge 0$. This raised the following research program.
\begin{question}
    Characterize all translation invariant measures of the form $\tilde \mu_d^S$ where $S \subset \Z$ is finite, whenever they exist. In particular, is there a natural set of properties such that if $S_1,S_2$ satisfies all of the properties, then it is guaranteed that $\tilde \mu_d^{S_1} = \tilde \mu_d^{S_2}$?
\end{question}

Another interesting direction is to consider $M$-lipschitz functions for $M>1$.
\begin{question}
    Extend \cref{thm:main_lipschitz_FKG} to $M>1$: Does $\mu^{\{0,\dots,M\}}_{n,d,M}$ converges for all $d \ge 2$ and $M \ge 2$?
\end{question}
\begin{question}
    Extend \cref{thm:main_general} to $M>1$: Given $M>1$, show that $\mu^0_{n,d,M}$ converges if and only if $d \le d_c(M)$. Can one at least show convergence for $M \gg d$?
\end{question}

Another very interesting avenue of research is extremality. The measures $\tilde \mu_d^{0}$ and $\tilde \mu_d^{\{0,1\}}$ are both mixing, hence ergodic. However, the question of whether they are extremal remains:
\begin{question}
    For what values of $d$ are the measures $\mu_d^{\{0,1\}}$ and $\mu_{d}^0$ extremal for $2 \le d \le 7$?
\end{question}
This is related to the question of extremality of the Ising model, see \cite[Section 3]{lyons_factor_tree}.

\section{Estimates of fixed point of $\varphi$}\label{sec:phi_estimate}
In this section we complete the proof of \Cref{prop:estimate_fixed_point}. Recall from \Cref{lem:S-abc-iteration} that  
\begin{equation*}
    \psi(\cS_{a,b,c}) \subset \cS_{a',b',c'},
\end{equation*}
where 
\[(a',b',c')= \varphi(a,b,c) := (f(1,b), f(1+c,a), g(b,c))  ,\]
and
\begin{align*}
    f(\alpha,x) &:= \left(\frac{1+\alpha x}{1+2x}\right)^d, \\
    g(b,x) &:= b^d \left(\frac{1+x+x^2}{1+b+bx}\right)^d. 
\end{align*}

Let us summarize the basic strategy to approximate the fixed point of $\varphi$.
\begin{itemize}
\item First we prove in \Cref{lem:fixed_point_of_upper_bound} that $x \mapsto g(b,x)$ has a unique fixed point in $(0,1)$.
    \item Since $\cS_{\varphi(0,1,c)} \subset \cS_{0,1,g(1,c)}$, we can iterate this bound many times to reach close to $\cS_{0,1,c_g}$ where $c_g$ is the unique fixed point of $x\mapsto g(1,x)$ in $(0,1)$.

    \item Next we note that $$\psi \circ \psi(\cS_{a,b,c_g}))  \subset  \cS_{{\sf i}(c_g,a), {\sf j}(c_g,a), c_g}$$
    where ${\sf i}, {\sf j}$ are defined as in \eqref{eq:i,j}. The reason for this approximation is that now it is enough to analyze the one dimensional maps ${\sf i}, {\sf j}$ (with $c_g$ fixed).
    \item We prove in \Cref{lem:two_fold_iteration} next that ${\sf i}, {\sf j}$ have derivatives  in $(0,1)$ which (along with some other basic properties) is enough to prove that each of them has a unique fixed point in $(0,1)$. We call them $a_{c_g}, b_{c_g}$. In conclusion, after enough iterates we approximately reach $\cS_{a_{c_g}, b_{c_g}, c_g}$.
    
    \item It turns out that $a_{c_g}, b_{c_g}, c_g$ can be taken  to be our estimates $\hat a_*, \hat b_*, \hat c_*$ for $3 \le d \le 6$. Unfortunately for $d=2,7$, these estimates are not enough. For these two cases we need another `round' of approximation as follows.

    \item In \Cref{lem:c_2nditerate} we prove using similar ideas that starting close to $\cS_{a_{c_g}, b_{c_g}, c_g}$, we can reach $\cS_{a_{c_g}, b_{c_g}, \tilde c_g}$ where $\tilde c_g$ is the unique fixed point of $x\mapsto g(b_g',x)$ where $b_g'=\max(f(1+c_g,a_{c_g}), b_g)$.
    \item Finall in \Cref{lem:iteration_ab_2nd}, we prove that starting close to $\cS_{a_{c_g}, b_{c_g}, \tilde c_g}$ we reach $\cS_{a_{\tilde c_g}, b_{\tilde c_g}, \tilde c_g}$ where $a_{\tilde c_g}, b_{\tilde c_g}$ are unique fixed points of $x \mapsto {\sf i}(x,\tilde c_g), x\mapsto {\sf j}(x, \tilde c_g)$.
    \item Now we need to explain how we get hold of the values close enough to the fixed points $c_g, a_{c_g}, b_{c_g}, \tilde c_g, a_{\tilde c_g}, b_{\tilde c_g}$. We prove in \Cref{lem:fixed_g} that the map $x >g(b,x)$  if and only if $x <c_g$ where $c_g$ is the fixed point of $x \mapsto g(b,x)$. Thus if we can produce two points $c_1, c_2$ such that $g(b,c_1) > c_1$ and $g(b,c_2)<c_2$ then $c_1<c_g<c_2$.
    In practice, we can `guess' good estimates of $c_1,c_2$ by looking at the plots, and then plug them into $g(b,x)$ to confirm that indeed $g(b,c_1)>c_1$, $g(b,c_2)<c_2$ which makes the estimates completely rigorous. The maps $x \mapsto {\sf i}(c,x), x \mapsto {\sf j}(c,x)$ also satisfy similar properties which we exploit in a similar manner to get good estimates of $a_{c_g}, b_{c_g}$ and then $a_{\tilde c_g}, b_{\tilde c_g}$. This part is done in a separate subsection \Cref{sec:anal_approx}.
\end{itemize}

We now explain the details of the above strategy.

\begin{lemma}\label{lem:fixed_point_of_upper_bound}
For $x>0,b\in (0,1]$, $x \mapsto g(b,x)$ is increasing, convex and has  a unique fixed point $c_0$  in $(0,1)$. Moreover, if $0 \le c<1$ then for any fixed $b$, defining $g_b(c) = g(b,c)$,  $g_b^{(i)}(c)$ converges to $c_0 = c_0(b)$. Also, $g_b(x)>x$ if and only if $x<c_0(b)$.
\end{lemma}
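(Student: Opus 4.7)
The plan is to reduce everything to the inner function $h(x) := (1+x+x^2)/(1+b+bx)$, since $g(b,x) = b^d h(x)^d$. I would first show that $h$ is positive, strictly increasing, and strictly convex on $[0,\infty)$: monotonicity follows from the direct computation $h'(x) = (bx^2 + 2(1+b)x + 1)/(1+b+bx)^2 > 0$, and the convexity computation collapses nicely because the numerator of $h'$ has derivative $2(bx+1+b)$, exactly twice the denominator of $h$, yielding $h''(x) = 2(1+b+b^2)/(1+b+bx)^3 > 0$. Since $y \mapsto y^d$ is strictly convex and strictly increasing on $y>0$, this passes through the composition to give that $g(b,\cdot)$ is strictly increasing and strictly convex on $[0,\infty)$.

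Next, I would consider $F(x) := g(b,x) - x$, which is strictly convex and therefore has at most two zeros on $[0,\infty)$. Evaluate $F(0) = (b/(1+b))^d > 0$ and $F(1) = (3b/(1+2b))^d - 1 \le 0$ (with equality iff $b=1$), and note that $g(b,x) \sim x^d \to \infty$, so $F(x) \to \infty$. For $b<1$, IVT yields one zero $c_0 \in (0,1)$ and another $c_2 > 1$, which by convexity are the only two. For $b=1$, $F(1)=0$, and a quick computation of $F'(1) = 2d/3 - 1 > 0$ (valid for $d\ge2$) shows $F<0$ just below $1$; combined with $F(0)>0$, IVT again yields $c_0 \in (0,1)$, unique by convexity.

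With the two zeros identified as $c_0 \in (0,1)$ and $c_2 \ge 1$, convexity of $F$ forces $F>0$ on $[0,c_0)$ and $F<0$ on $(c_0,c_2)$; restricted to $[0,1]$ this is exactly the final assertion $g_b(x)>x \iff x<c_0$. For iterate convergence, I would first observe that $g_b$ maps $[0,1]$ into itself (using $g_b(0)>0$, $g_b(1)\le 1$, and monotonicity) and then combine the sign of $F$ with monotonicity of $g_b$: for $c \in [0,c_0)$, the orbit $(g_b^{(i)}(c))$ is increasing and bounded above by $c_0$ (since $g_b$ is increasing and $g_b(c_0)=c_0$), so it converges to a fixed point in $[c,c_0]$, which by uniqueness is $c_0$; for $c \in (c_0,1)$ the orbit analogously decreases to $c_0$, and the case $c=c_0$ is trivial.

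The main algebraic hurdle is the convexity computation for $h$, which depends on spotting the favorable cancellation giving the clean formula for $h''$; once that is in hand, the remaining steps are routine consequences of the shape of a convex function with at most two zeros and the standard theory of iterating a monotone continuous self-map of $[0,1]$.
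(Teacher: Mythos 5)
Your proof is correct and follows the same basic shape as the paper's (monotonicity and convexity of $g(b,\cdot)$, boundary values at $0$ and $1$, a derivative check at $x=1$ when $b=1$, then sign analysis and monotone iteration). The one genuine difference is the monotonicity/convexity step: the paper brute-forces $\partial g/\partial x$ and $\partial^2 g/\partial x^2$ and asserts positivity of a fairly unpleasant polynomial in $b,x,d$, whereas you factor $g(b,x)=b^d h(x)^d$ with $h(x)=(1+x+x^2)/(1+b+bx)$ and observe the cancellation $N'=2D$ giving the clean identity $h''(x)=2(1+b+b^2)/(1+b+bx)^3$; convexity of $g$ then follows by composing with the convex increasing map $y\mapsto y^d$. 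That is a neater computation than the paper's, and the rest of your argument (strict convexity of $F(x)=g(b,x)-x$ implying at most two zeros, IVT giving $c_0\in(0,1)$, the sign dichotomy $F>0$ on $[0,c_0)$ and $F<0$ on $(c_0,1]$, and the standard monotone-orbit argument for convergence of iterates) is sound and matches the paper's logic.
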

\begin{proof}
We have 
$$
\frac{\partial g}{\partial x} = \frac{db^d(bx^2+2bx+2x+1)(x^2+x+1)^{d-1}}{(bx+b+1)^{d+1}}
$$
and 
\begin{multline*}
\frac{\partial^2 g}{\partial x^2} = d b^d (x^2 + x + 1)^{d - 2} (b x + b + 1)^{-d - 2}\\
(b^2 ((d - 1) x^4 + 4 (d - 1) x^3 + (4 d - 2) x^2 + 2 x + 2) + 2 b (2 (d - 1) x^3 + (5 d - 4) x^2 + (2 d - 1) x + 1) + d (2 x + 1)^2 - 2 x^2 - 2 x + 1)
\end{multline*}
It can be easily checked from these expressions that $\partial g/\partial x$ and $\partial^2g/\partial x^2$ are both strictly positive for $d \ge 2$. 

For $b<1$, since $g(b,0) = b^d >0$ and $g(b,1)  < g(1,1) = 1$ and $g $ is strictly increasing and convex, it must be the case that $g$ has a unique fixed point in $(0,1)$. If $b=1$, although $g(1)=1$ and we further note that $$\frac{\partial g}{\partial x}|_{x=1} = \frac{2d}{3}>1$$
if $d \ge 2$. Thus $g$ has a unique fixed point in $(0,1)$ in this case also.  This takes care of the `also' part in the lemma as well.

The fact that $g^{(i)}(c)$ converges to the fixed point $c_0$ is immediate from the fact that $c_0<g(x)<x$ if $x>c_0$ and $c_0>g(x)>x$ if $x<c_0$, and hence the iterates monotonically converge to $c_0$.
\end{proof}

We now introduce a convenient notion of convergence of sets of the form $\cS_{a,b,c}$. We say $\cS_{a,b,c} \xrightarrow[]{\psi} \cS_{a_0,b_0,c_0}$ if for all $x \in \cS_{a,b,c}$ and all $\ve>0$, there exists a $K$ such that for all $k \ge K$, $\psi^{(k)}(x) \in \cS_{a_0-\eps,b_0+\eps,c_0+\eps}$. Similarly, we define $\cS_{a,b,c} \xrightarrow[]{\psi^2} \cS_{a_0,b_0,c_0}$ for all $x \in \cS_{a,b,c}$ and all $\ve>0$, there exists a $K$ such that for all $k \ge K$, $\psi^{(2k)}(x) \in \cS_{a_0-\eps,b_0+\eps,c_0+\eps}$.

\begin{lemma}\label{lem:fixed_g}
Fix $c<1$.
    We have $$\cS_{0,1,c} \xrightarrow[]{\psi} \cS_{0,1,c_g},$$ 
    where $c_g$ is the unique fixed point of $g(1,\cdot)$ in $(0,1)$ (as guaranteed by \Cref{lem:fixed_point_of_upper_bound}).
\end{lemma}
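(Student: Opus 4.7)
The plan is to apply Lemma~\ref{lem:S-abc-iteration} iteratively and track only the evolution of the third parameter, which controls the tail. Specifically, define $(a_k,b_k,c_k)$ by $(a_0,b_0,c_0)=(0,1,c)$ and $(a_{k+1},b_{k+1},c_{k+1}):=\varphi(a_k,b_k,c_k)$. Then Lemma~\ref{lem:S-abc-iteration} gives inductively that $(a_k,b_k,c_k)\in\cI$ and $\psi^{(k)}(\cS_{0,1,c})\subset\cS_{a_k,b_k,c_k}$ for all $k\ge 0$. Since I only need an upper bound in terms of $\cS_{0,1,\cdot}$, I will simply discard the first two parameters and focus on controlling $c_k$.

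The key observation is that $c_{k+1}=g(b_k,c_k)$ and $b_k\le 1$ (since $(a_k,b_k,c_k)\in\cI$), so by the monotonicity~\eqref{eq:g'} of $g$ in its first argument,
\[
c_{k+1}=g(b_k,c_k)\le g(1,c_k).
\]
Writing $g_1(x):=g(1,x)$ and using that $g_1$ is itself increasing (as observed in the proof of Lemma~\ref{lem:fixed_point_of_upper_bound}), an immediate induction gives $c_k\le g_1^{(k)}(c)$ for all $k\ge 0$.

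Now I invoke Lemma~\ref{lem:fixed_point_of_upper_bound} with $b=1$: the map $g_1$ has a unique fixed point $c_g\in(0,1)$, and $g_1^{(k)}(c)\to c_g$ monotonically for any starting point $c\in[0,1)$. Thus, given any $\eps>0$, there exists $K$ such that $g_1^{(k)}(c)<c_g+\eps$ for every $k\ge K$, which combined with the previous paragraph gives
\[
\psi^{(k)}(\cS_{0,1,c})\subset\cS_{a_k,b_k,c_k}\subset\cS_{0,1,\,c_g+\eps}\qquad\text{for all }k\ge K,
\]
using only that $a_k\ge 0$ and $b_k\le 1$. This is exactly the convergence $\cS_{0,1,c}\xrightarrow{\psi}\cS_{0,1,c_g}$.

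There is essentially no obstacle here once Lemmas~\ref{lem:S-abc-iteration} and~\ref{lem:fixed_point_of_upper_bound} are in hand; the only subtlety is noticing that one should dominate $c_{k+1}=g(b_k,c_k)$ by $g(1,c_k)$ rather than trying to track the joint evolution of $(a_k,b_k,c_k)$. The latter approach is unnecessary because the target set $\cS_{0,1,c_g+\eps}$ imposes no constraint on the first two coordinates beyond $a\ge 0$ and $b\le 1$, both of which hold automatically from membership in $\cI$.
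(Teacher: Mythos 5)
Your proof is correct and is precisely the argument the paper compresses into ``immediate from Lemmas~\ref{lem:S-abc-iteration} and~\ref{lem:fixed_point_of_upper_bound}'': iterate $\varphi$, dominate $c_{k+1}=g(b_k,c_k)$ by $g(1,c_k)$ using $b_k\le 1$ and monotonicity of $g$ in $b$, and invoke the convergence of $g_1^{(k)}(c)$ to $c_g$. No meaningful difference in approach.
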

\begin{proof}
    This is immediate from \Cref{lem:S-abc-iteration,lem:fixed_point_of_upper_bound}.
\end{proof}

Now we plan to bootstrap this estimate into an iteration of the first two coordinates.
To that end, define 
\begin{equation}
   {\sf i}(c,x) := f(1,f(1+c,x)), \qquad {\sf j}(c,x) := f(1+c, f(1,x)). \label{eq:i,j}
\end{equation}

{To motivate these definitions, note that ${\sf i}(c,a)$ is precisely the first coordinate of $\varphi^{(2)}(a,b,c)$. It turns out that ${\sf j}(c,b)$ serves as a bound on the second coordinate of $\varphi^{(2)}(a,b,c)$ when $c=c_g$.}

\begin{lemma} \label{lem:two_fold_iteration}
    For all $2 \le d \le 7$, there exists a $0<c_g<c_d $  such that for all $c<c_d$, both $x \mapsto {\sf i}(c,x)$ and $x \mapsto {\sf j}(c,x)$ have unique fixed points $a_{ c},b_{c}$ in $(0,1)$. Furthermore, both $|\partial {\sf i}/\partial x |<1$ and $|\partial {\sf j}/\partial x |<1$ in $[0,1]$. Consequently for any $x \in [0,1]$, $${\sf i}^{(k)}(x)  \to a_{c} \text{ and }{\sf j}^{(k)}(x)  \to b_{c}\text{ as } k \to \infty.$$
    Furthermore, ${\sf i}(c,x)>x$ if and only if $x \in [0,a_c)$ and ${\sf j}(c,x)>x$ if and only if $x \in [0,b_c)$.
\end{lemma}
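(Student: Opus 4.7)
The plan is to exploit the fact that for $\alpha<2$ the map $x\mapsto f(\alpha,x)=((1+\alpha x)/(1+2x))^d$ is strictly decreasing in $x$, by \eqref{eq:f'} applied to the inner quotient. Therefore both ${\sf i}(c,\cdot)=f(1,f(1+c,\cdot))$ and ${\sf j}(c,\cdot)=f(1+c,f(1,\cdot))$ are compositions of two strictly decreasing maps, hence strictly increasing on $[0,1]$. Evaluating at the endpoints gives ${\sf i}(c,0)=f(1,1)=(2/3)^d>0$, ${\sf i}(c,1)=f(1,f(1+c,1))<1$, and similarly ${\sf j}(c,0)=((2+c)/3)^d>0$ and ${\sf j}(c,1)<1$. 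The intermediate value theorem then produces at least one fixed point in $(0,1)$ for each map.

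The central task is the derivative bound. By the chain rule, using $\partial f/\partial x(\alpha,x)=d(\alpha-2)(1+\alpha x)^{d-1}/(1+2x)^{d+1}$ (a product of two negative numbers when $c<1$),
\[
\left|\frac{\partial {\sf i}}{\partial x}(c,x)\right|=\frac{d^2(1-c)\bigl(1+f(1+c,x)\bigr)^{d-1}\bigl(1+(1+c)x\bigr)^{d-1}}{\bigl(1+2f(1+c,x)\bigr)^{d+1}(1+2x)^{d+1}},
\]
and an analogous expression holds for $\partial {\sf j}/\partial x$. For each $d\in\{2,\ldots,7\}$ separately, I would pick $c_d$ slightly larger than the numerically known value of $c_g=c_g(d)$ from \Cref{lem:fixed_point_of_upper_bound}, and verify that the displayed expression is strictly below $1$ on $[0,1]\times[0,c_d]$. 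The plan is to proceed as in \Cref{lem:derivative12}: observe that each factor of the form $(1+u)^{d-1}/(1+2u)^{d+1}$ is unimodal in $u$ with a critical point at $u_\ast=(d-3)/(d+1)$, reducing the problem to checking the bound at the endpoints and at $u_\ast$, after composing with the (monotone) inner function $f(1+c,x)$. The analogous bound for ${\sf j}$ is entirely symmetric.

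Once the derivative bound is established, the remaining statements follow cleanly. Uniqueness of the fixed point is immediate, since two fixed points $a<a'$ would force $\partial {\sf i}/\partial x=1$ somewhere in $(a,a')$ by the mean value theorem. Moreover, ${\sf i}(c,\cdot)$ and ${\sf j}(c,\cdot)$ become strict contractions on the complete interval $[0,1]$, so the Banach fixed point theorem gives ${\sf i}^{(k)}(x)\to a_c$ and ${\sf j}^{(k)}(x)\to b_c$ for every $x\in[0,1]$. For the final ``if and only if'' claim, note that $x\mapsto {\sf i}(c,x)-x$ is continuous, vanishes only at $a_c$, and is positive at $x=0$ since ${\sf i}(c,0)>0$; hence it is positive on $[0,a_c)$ and negative on $(a_c,1]$, and the same reasoning handles ${\sf j}$.

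The main obstacle is the derivative estimate at $d=7$, where the contraction is weakest (the table indicates $c_g\approx 1.8\cdot 10^{-5}$ but ${\sf j}(c,\cdot)$ has slope close to $1$ near its fixed point $b_c\approx 0.27$, since $f(1,\cdot)$ has derivative close to $-1$ there). For this case the plan is to use the smallness of $c$ to replace the factor $(1-c)$ by something essentially $1$, and to estimate the remaining $c$-independent quantity by the case-analysis strategy above, choosing $c_d$ just above $c_g$ so that the bound has only a little room to spare. If a direct hand calculation becomes too delicate, a fallback is to replace the uniform bound on $[0,1]$ by a uniform bound on a smaller invariant subinterval containing the fixed point (into which iterates are quickly absorbed), which still suffices for the conclusions of the lemma.
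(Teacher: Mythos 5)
The structural skeleton of your proof---monotonicity of ${\sf i},{\sf j}$ as compositions of two decreasing maps, the intermediate value theorem at the endpoints, uniqueness and convergence from a derivative bound, and the ``iff'' via continuity---matches the paper's. Your chain-rule expression for $\partial {\sf i}/\partial x$ is also correct. The gap is in the derivative bound itself, which is the whole content of the lemma.

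First, a concrete error: the function $h(u)=(1+u)^{d-1}/(1+2u)^{d+1}$ is \emph{not} unimodal on $[0,1]$ with interior critical point $(d-3)/(d+1)$. Computing $h'(u)/h(u)=(d-1)/(1+u)-2(d+1)/(1+2u)$ and setting it to zero gives $u=-(d+3)/4<0$, so $h$ is strictly decreasing on $[0,\infty)$ for every $d\ge 2$. The same holds for the companion factor $(1+(1+c)x)^{d-1}/(1+2x)^{d+1}$ when $c$ is small.

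Second, and more fundamentally, even with corrected monotonicity the proposed reduction to ``endpoints and critical point'' does not work, because the two factors in $|\partial{\sf i}/\partial x|$ move in \emph{opposite} directions in $x$: the factor $h(f(1+c,x))$ is increasing in $x$ (decreasing $h$ composed with decreasing $f(1+c,\cdot)$), while the factor $(1+(1+c)x)^{d-1}/(1+2x)^{d+1}$ is decreasing in $x$. A product of an increasing and a decreasing function can attain its maximum anywhere in the interior, so checking only boundary or critical values of one factor gives no control. This is precisely why the paper's proof does not attempt a one-shot estimate; instead it introduces the dominating function $\xi(x)=d(1+(1+c_d)x)^{d-1}/(1+2x)^{d+1}$, bounds the product on any subinterval $[a_1,a_2]$ by $\xi(f(1,a_2))\,\xi(a_1)$ (using the increasing factor at the right endpoint and the decreasing factor at the left), and then partitions $[0,1]$ into finitely many subintervals on each of which this crude bound is verified to be $<1$. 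The partitions get progressively finer as $d\to 7$, reflecting the fact that the contraction is nearly lost there; at $d=7$ the paper uses a partition into roughly thirty pieces.

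Your fallback (restricting to an invariant subinterval around the fixed point) does not rescue the argument as stated: the lemma's conclusion ``$|\partial{\sf i}/\partial x|<1$ on $[0,1]$'' and the ``iff'' characterization of $\{{\sf i}(c,x)>x\}$ are used downstream (in the proofs of \Cref{lem:c_2nditerate,lem:iteration_ab_2nd}) for arbitrary inputs in $[0,1]$, not only near the fixed point, and identifying the invariant subinterval in advance presupposes the uniqueness you are trying to establish. To close the gap you would need to carry out the subinterval partition and the explicit numeric verification, which is exactly the part of the paper's proof you omitted.
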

We postpone the proof of \Cref{lem:two_fold_iteration} to later in order to maintain the flow of the argument.
\begin{lemma}\label{lem:iterate_ab}
    Let $c_g$ be as in \Cref{lem:fixed_g} and $a_{c_g},b_{c_g}$ as in \cref{lem:two_fold_iteration} for $c=c_g$.  Then for all $c<1$
    $$
    \cS_{0,1,c} \xrightarrow[]{\psi^2} \cS_{a_{c_g}, b_{c_g}, c_g}.
    $$
    as $k \to \infty$.
\end{lemma}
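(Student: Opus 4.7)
Plan: Fix $x\in\cS_{0,1,c}$ and $\epsilon>0$. By iterating \Cref{lem:S-abc-iteration}, $\psi^{(k)}(x)\in\cS_{a_k,b_k,c_k}$ where $(a_k,b_k,c_k):=\varphi^{(k)}(0,1,c)$. First I would establish two a priori bounds along this orbit. Since $c_k<1$ and $f(1+c_k,\cdot)$ is decreasing by \eqref{eq:f'}, we have $b_{k+1}=f(1+c_k,a_k)\le f(1+c_k,0)=1$; and since $g$ is increasing in $b$ by \eqref{eq:g'}, $c_{k+1}=g(b_k,c_k)\le g(1,c_k)$. Iterating the latter and invoking \Cref{lem:fixed_g} (convergence of $g(1,\cdot)$-iterates to $c_g$), one obtains $\limsup_k c_k\le c_g$, so for any $\delta>0$ there exists $K_1$ with $c_k\le c_g+\delta$ for all $k\ge K_1$.

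Next I would expand the two-step iteration. Direct computation using $\varphi(a,b,c)=(f(1,b),f(1+c,a),g(b,c))$ and the definitions of ${\sf i},{\sf j}$ in \eqref{eq:i,j} gives
\[
a_{k+2}={\sf i}(c_k,a_k),\qquad b_{k+2}={\sf j}(g(b_k,c_k),\,b_k).
\]
By \eqref{eq:f'} and the chain rule, ${\sf i}(c,x)$ is decreasing in $c$ and increasing in $x$, while ${\sf j}(c,x)$ is increasing in both arguments. Combined with $b_k\le 1$ and $c_k\le c_g+\delta$ for $k\ge K_1$, the monotonicities yield the sandwich
\[
a_{k+2}\ge {\sf i}(c_g+\delta,\,a_k),\qquad b_{k+2}\le {\sf j}(g(1,c_g+\delta),\,b_k) \qquad(k\ge K_1).
\]
Inductively (propagating the inequality through the monotone $x$-argument), $a_{2k}$ is bounded below by iterates of the one-dimensional map ${\sf i}(c_g+\delta,\cdot)$ and $b_{2k}$ is bounded above by iterates of ${\sf j}(g(1,c_g+\delta),\cdot)$.

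Finally, for $\delta$ small enough that both $c_g+\delta<c_d$ and $g(1,c_g+\delta)<c_d$, \Cref{lem:two_fold_iteration} guarantees that these perturbed maps are contractions on $[0,1]$ with unique fixed points $a_{c_g+\delta}$ and $b_{g(1,c_g+\delta)}$, to which iterates from any starting point converge. Continuity of the defining fixed-point equations $a={\sf i}(c,a)$ and $b={\sf j}(c,b)$ in the parameter $c$ (via the implicit function theorem, using that $|\partial_x{\sf i}|,|\partial_x{\sf j}|<1$) gives $a_{c_g+\delta}\to a_{c_g}$ and $b_{g(1,c_g+\delta)}\to b_{c_g}$ as $\delta\to 0$. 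Choosing $\delta$ so small that both perturbed fixed points are within $\epsilon/2$ of their targets, and then $k$ large enough that the dominating iterates are within $\epsilon/2$ of their limits, produces $a_{2k}\ge a_{c_g}-\epsilon$, $b_{2k}\le b_{c_g}+\epsilon$, and (taking $\delta\le\epsilon$ in Step~1) $c_{2k}\le c_g+\epsilon$, which is exactly the required containment $\psi^{(2k)}(x)\in\cS_{a_{c_g}-\epsilon,b_{c_g}+\epsilon,c_g+\epsilon}$. The main obstacle is that $c_k$ does not stabilize at $c_g$ along the orbit but only satisfies $\limsup c_k\le c_g$, so the $(a,b)$-dynamics are never literal iterates of the $c=c_g$ maps; the key device for circumventing this is the joint monotonicity of ${\sf i}$ and ${\sf j}$ in $c$, combined with the robustness of the fixed points guaranteed by \Cref{lem:two_fold_iteration}.
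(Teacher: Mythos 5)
Your proof is correct and follows essentially the same route as the paper: control the $c$-coordinate via the $g(1,\cdot)$-dynamics, express the two-step $(a,b)$-recursion in terms of ${\sf i}$ and ${\sf j}$, exploit their monotonicity in both arguments to reduce to the one-dimensional maps ${\sf i}(c_g+\delta,\cdot)$, ${\sf j}(\cdot,\cdot)$ handled by \Cref{lem:two_fold_iteration}, and pass to the limit $\delta\to 0$ using continuity of the fixed point. The only cosmetic difference is that you feed ${\sf j}$ the slightly tighter parameter $g(1,c_g+\delta)$ where the paper uses $c_g+\delta$, which changes nothing since ${\sf j}$ is increasing in its first argument and $g(1,c_g+\delta)\le c_g+\delta$; also note the convergence of $g(1,\cdot)$-iterates that you attribute to \Cref{lem:fixed_g} is really the content of \Cref{lem:fixed_point_of_upper_bound}.
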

\begin{proof}
Applying \Cref{lem:fixed_g}, we see that for all $\delta\ge 0$ so that $c_g+\delta<1$ for all large enough $k$
\begin{equation*}
   \psi^{(k)} (\cS_{0,1,c} ) \subset \cS_{0,1,c_g+\delta} 
\end{equation*}
    Recall
\begin{equation*}
    \varphi (a,b,c_g+\delta) = (f(1,b), f(1+c_g+\delta,a), g(b,c_g+\delta))
\end{equation*}
Since $g(b,x)$ is increasing in $b$, and $g(1,x)\le x$ if and only if $x \ge c_g$, $g(b,c_g+\delta) \le g(1,c_g+\delta)  \le c_g+\delta$. 
Thus we have 
$$
\cS_{\varphi(a,b,c_g+\delta)} \subseteq \cS_{f(1,b), f(1+c_g+\delta,a), c_g+\delta}
$$
Now note 
$$
\varphi(f(1,b), f(1+c_g+\delta,a), c_g+\delta) = (f(1,f(1+c_g+\delta,a)), f(1+c_g+\delta, f(1,b)), g(f(1+c_g,a), c_g+\delta)).
$$
Note again that $g(f(1+c_g,a), c_g+\delta) \le g(1,c_g+\delta) \le c_g+\delta$. 
Thus we get
\begin{equation*}
  \cS_{\varphi(f(1,b), f(1+c_g+\delta,a), c_g+\delta)} \subset \cS_{{\sf i}(c_g+\delta,a),{\sf j}(c_g+\delta,b), c_g+\delta}.  
\end{equation*}
Overall 
\begin{equation*}
    \psi \circ \psi(\cS_{a,b,c_g+\delta}) \subset \psi (\cS_{\varphi(a,b,c_g+\delta)}) \subset \psi(\cS_{f(1,b), f(1+c_g+\delta,a), c_g}) \subset \cS_{\varphi(f(1,b), f(1+c_g+\delta,a), c_g+\delta)} \subset \cS_{{\sf i}(c_g+\delta,a),{\sf j}(c_g+\delta,b), c_g+\delta}.
\end{equation*}


Thus iterating, and using \Cref{lem:two_fold_iteration}, eventually the sequence enters $\cS_{a_{c_g(\delta)}-\eps, b_{c_g(\delta)}+\eps, c_g+\delta}$ for arbitrarily small $\eps$, where $a_{c_g(\delta)} ,b_{c_g(\delta)}$ are the unique fixed points of $x \mapsto {\sf i}(c_g+\delta, x)$  and ${\sf j}(c_g+\delta, x)$ in $(0,1)$ respectively. By continuity, $a_{c_g(\delta)} \to a_{c_g}, b_{c_g(\delta)} \to b_{c_g}$ as $\delta \to 0$. This allows us to conclude the lemma.
\end{proof}



Although the approximation $(a_{c_g},b_{c_g}, c_g)$ for the fixed point of $\varphi(a,b,c)$ is good enough for $3 \le d \le 6$, we need another round of iteration to get closer to the fixed point of $\varphi(a,b,c)$ for $d \in \{2,7\}$. To that end, define $$b_g' = \max(f(1+c_g,a_{c_g}), b_{c_g}).$$ Let $\tilde c_g$ be the fixed point of the map $x \mapsto g(b_g', x)$ (which is guaranteed to exist by \Cref{lem:fixed_g}).

\begin{lemma}\label{lem:c_2nditerate}
We have  for all $c<1$
\begin{equation*}
        \cS_{0, 1, c} \xrightarrow[]{\psi^2} \cS_{a_{c_g}, b_{c_g}, \tilde c_g}
\end{equation*}
as $k \to \infty$. Furthermore, $\tilde c_g < c_g<c_d$.
\end{lemma}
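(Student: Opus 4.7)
The plan is to bootstrap \Cref{lem:iterate_ab}: once the orbit is trapped near $(a_{c_g}, b_{c_g}, c_g)$, the $b$-coordinate is bounded by $b_g'<1$ rather than $1$, which upgrades the $c$-dynamics from being governed by $g(1,\cdot)$ (fixed point $c_g$) to being dominated by $g(b_g',\cdot)$ (fixed point $\tilde c_g < c_g$). The strict inequality will follow from strict monotonicity of $g$ in $b$.

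Fix $\eta>0$. By \Cref{lem:iterate_ab}, there is $K$ such that $\psi^{2k}(\cS_{0,1,c})\subset \cS_{a_{c_g}-\eta,\,b_{c_g}+\eta,\,c_g+\eta}$ for all $k\ge K$. I will then track $(a_k,b_k,c_k):=\varphi^k(a_0,b_0,c_0)$ for an arbitrary triple in this box. By mimicking the argument of \Cref{lem:iterate_ab} (using the contractivity of ${\sf i}, {\sf j}$ guaranteed by \Cref{lem:two_fold_iteration}), an induction shows the box is $\varphi$-invariant, so $a_k\ge a_{c_g}-\eta$, $b_k\le b_{c_g}+\eta$, $c_k\le c_g+\eta$ for all $k$. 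Consequently,
\[
b_{k+1}=f(1+c_k,a_k)\le f(1+c_g+\eta,\,a_{c_g}-\eta),
\]
which tends to $f(1+c_g,a_{c_g})\le b_g'$ as $\eta\to 0$; combined with $b_k\le b_{c_g}+\eta\le b_g'+\eta$, one obtains a uniform bound $b_k\le b_g'+\eta'$ for some $\eta'=O(\eta)$, valid along the entire orbit.

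Because $g$ is increasing in its first argument by \eqref{eq:g'}, the recursion then gives $c_{k+1}=g(b_k,c_k)\le g(b_g'+\eta',c_k)$, dominating $c_k$ by the iterates of the one-dimensional map $x\mapsto g(b_g'+\eta',x)$. By \Cref{lem:fixed_point_of_upper_bound}, this map has a unique fixed point $\tilde c_g(\eta')\in(0,1)$ and the iterates starting from $c_0\le c_g+\eta<1$ converge monotonically to it. By continuity of the fixed point in its parameter, $\tilde c_g(\eta')\to \tilde c_g$ as $\eta'\to 0$, so a standard $\eta,\delta$ chase yields $\cS_{0,1,c}\xrightarrow[]{\psi^2} \cS_{a_{c_g},b_{c_g},\tilde c_g}$.

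Finally, $c_g<c_d$ is part of \Cref{lem:two_fold_iteration}. For the strict inequality $\tilde c_g<c_g$, observe that $b_g'<1$, since $b_{c_g}<1$ (because ${\sf j}(c_g,1)<1$) and $f(1+c_g,a_{c_g})<1$ (because $a_{c_g}>0$ and $1+c_g<2$ force $f<1$). Strict monotonicity of $g$ in $b$ via \eqref{eq:g'} then gives $g(b_g',c_g)<g(1,c_g)=c_g$, so by \Cref{lem:fixed_point_of_upper_bound} applied to $g(b_g',\cdot)$ the fixed point $\tilde c_g$ lies strictly below $c_g$. The main technical obstacle is the inductive invariance of the box $\cS_{a_{c_g}-\eta,b_{c_g}+\eta,c_g+\eta}$ under $\varphi$, which requires propagating three interlocking monotonicity bounds simultaneously so that $b_k$ stays bounded by $b_g'+O(\eta)$ even as $c_k$ shrinks below $c_g$.
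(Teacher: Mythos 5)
Your strategy is the same as the paper's: once \cref{lem:iterate_ab} has trapped the orbit near $(a_{c_g},b_{c_g},c_g)$, bound the $b$-coordinate below $1$ and dominate the $c$-recursion by the one-dimensional map $g(b_g'+O(\eta),\cdot)$ whose fixed point tends to $\tilde c_g$. The deduction of $\tilde c_g<c_g$ via strict monotonicity of $g$ in $b$ matches the paper verbatim.

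There is, however, one unjustified and likely false step: the assertion that ``an induction shows the box $\cS_{a_{c_g}-\eta,b_{c_g}+\eta,c_g+\eta}$ is $\varphi$-invariant.'' The $b$-coordinate after a single $\varphi$ is $f(1+c,a)$, and with $c=c_g+\eta$, $a=a_{c_g}-\eta$ this can exceed $b_{c_g}+\eta$ (the relevant partial-derivative sum is not bounded by $1$, especially for $d=7$); this overshoot is precisely why the paper introduces $b_g''=\max\bigl(f(1+c_g+\delta,a_{c_g}-\delta),\,b_{c_g}+\delta\bigr)$. Likewise the contractivity in \cref{lem:two_fold_iteration} controls ${\sf i}$ and ${\sf j}$, i.e.\ $\varphi^{2}$, not $\varphi$ itself. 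The paper avoids your claim altogether by first driving the $c$-coordinate strictly below $c_g$ and only then proving $\psi^2$-invariance of $\cS_{a_{c_g}-\delta,b_{c_g}+\delta,c}$ using the fixed-point inequalities ${\sf i}(c_g,a_{c_g}-\delta)\ge a_{c_g}-\delta$ and ${\sf j}(c_g,b_{c_g}+\delta)\le b_{c_g}+\delta$.

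Fortunately the flawed claim is also unnecessary for what you actually use: \cref{lem:iterate_ab} already gives the even-index bounds $a_{2k}\ge a_{c_g}-\eta$, $b_{2k}\le b_{c_g}+\eta$, $c_{2k}\le c_g+\eta$ for all large $k$ (not just one $k$), and then the odd-index bound $b_{2k+1}=f(1+c_{2k},a_{2k})\le f(1+c_g+\eta,a_{c_g}-\eta)\le b_g'+\eta'$ follows directly; no $\varphi$-invariance is needed. If you replace the invariance claim by this direct parity-by-parity argument, the remainder (dominating $c_{k+1}\le g(b_g'+\eta',c_k)$, invoking \cref{lem:fixed_point_of_upper_bound}, and sending $\eta\to 0$) is sound and produces the statement, which is in essence what the paper does, just organized differently.
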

\begin{proof}
Since $b_g' <1$, $\tilde c_g = g(b_g',\tilde c_g) < g(1, \tilde c_g)$ which implies $\tilde c_g<c_g$.

Next, by \Cref{lem:iterate_ab}, for all $\delta$, 
$$
\psi^{(2k)} (\cS_{0,1,c}) \subset \cS_{a_{c_g} - \delta, b_{c_g} + \delta, c_g+\delta}.
$$
for all large enough $k$.
Pick $\delta$ small enough so that $c_g+\delta<c_d$ and $b_g''<1$ where
\begin{equation}
    b_g'' = \max (f(1+c_g+\delta,a_{c_g } -\delta), b_{c_g} + \delta). \label{eq:bg''}
\end{equation}
Note that
$$
g(b_g'',c_g ) < g(1,c_g) = c_g
$$
Hence by continuity we can pick a further small $\delta$ if required to conclude that $g(b_g'', c_g+\delta)  \le c_g$. 

We first show that after two more steps of iteration, we enter $\cS_{a_{c_g} - \delta, b_{c_g} + \delta, c_g}$. Indeed after one iteration
$$
\psi(\cS_{a_{c_g} - \delta, b_{c_g} + \delta, c_g}) \subset \cS_{f(1,b_{c_g}+\delta),f(1+c_g+\delta, a_{c_g} - \delta) ,g(b_{c_g-\delta},c_g)}
$$
by the choice of $k$. Now note since $b \mapsto g(b,x)$ is increasing in $b$, 
$$
g(b_{c_g-\delta}, c_g+\delta) \le g(b_g'', c_g+\delta) \le c_g 
$$
by the choice of $\delta$. By the same logic and the choice of $k$, after another iteration, we are in $\cS_{a_{c_g} - \delta, b_{c_g} + \delta, c_g}$.

We will show that for all $c\le c_g$, 
\begin{equation}
    \psi^{(2)}(\cS_{a_{c_g}-\delta,b_{c_g}+\delta,c}) \subseteq \cS_{a_{c_g}-\delta, b_{c_g}+\delta , g(b_g'',g(b_g'',c))} \text{ and } g(b_g'',g(b_g'',c)) \le c_g.
\end{equation}
Indeed by \Cref{lem:fixed_g}, this is enough since the fixed point of $x \mapsto g(b_g'',x)$ tends to that of $x \mapsto g(b_g',x)$ as $\delta \to 0$ by continuity.

Since  $b \mapsto g(b,x)$ is increasing (see \eqref{eq:g}) and $c \mapsto f(1+c,x)$ is increasing,
\begin{equation*}
    g(b_{c_g}+\delta, c) \le g(b_g'',c), \qquad \qquad f(1+c, a_{c_g}-\delta) \le f(1+c_g, a_{c_g}-\delta).
\end{equation*}
Thus we have 
\begin{equation*}
    \cS_{\varphi(a_{c_g}-\delta, b_{c_g}+\delta,c)} \subseteq \cS_{f(1,b_{c_g}+\delta), f(1+c_g,a_{c_g}-\delta), g(b_g'',c)}.
\end{equation*}
Now 
\begin{multline*}
    \varphi(f(1,b_{c_g}+\delta), f(1+c_g,a_{c_g}-\delta), g(b_g'',c)) \\= (f(1,f(1+c_g,a_{c_g}-\delta)), f(1+g(b_g'',c), f(1,b_{c_g}+\delta)), g(f(1+c_g,a_{c_g}-\delta), g(b_g'',c)) )
\end{multline*}
Now note,
\begin{equation*}
 f(1, f(1+c_g, a_{c_g} - \delta))  = {\sf i}(c_g,a_{c_g} - \delta) \ge a_{c_g}- \delta  
\end{equation*}
since ${\sf i}(c_g, x)>x$ if and only if $x<a_{c_g}$.
 Also since $b\mapsto g(b,c)$ and $c \mapsto g(b,c)$ are both increasing, and $g(1,x)<x$ if and only if $x>c_g$,
\begin{equation*}
    g(b_g'',c) \le g(b_g'', c_g) \le  c_g, \qquad g(f(1+c_g,a_{c_g}-\delta), g(b_g'',c))  \le g(b_g'', g(b_g'', c)).
\end{equation*}
and hence since $c \mapsto f(1+c,x)$ is increasing as well,
\begin{equation*}
    f(1+g(b_g'',c), f(1,b_{c_g}+\delta)) \le f(1+c_g, f(1, b_{c_g}+\delta))  \le  b_{c_g}+\delta.
\end{equation*}
where for the last equality we used the face that ${\sf }j(c_g,x)<x$ if and only if $x>b_{c_g}$.
Combining, we get
\begin{equation*}
    \cS_{\varphi(f(1,b_{c_g}+\delta), f(1+c_g,a_{c_g}-\delta), g(b_g'',c))} \subseteq \cS_{a_{c_g}-\delta, b_{c_g}+\delta, g(b_g'',g(b_g'',c))}.
\end{equation*}
We now need to show that $g(b_g'',g(b_g'',c)) \le c_g$, which is clear since 
$$
g(b_g'', g(b_g'',c)) \le g(b_g'', c_g) \le c_g
$$
by our choice of $\delta$.
The proof is complete.
\end{proof}

Next we bootstrap this to improve upon $a_{c_g}, b_{c_g}$. Let $a_{\tilde c_g}, b_{\tilde c_g}$ be the fixed points of $x \mapsto {\sf i}(\tilde c_g, x)$ and $ x \mapsto {\sf j}(\tilde c_g, x)$ respectively. Then 
\begin{lemma}\label{lem:iteration_ab_2nd}
We have for all $c<1$
\begin{equation*}
   \cS_{0, 1, c} \xrightarrow[]{\psi^2} \cS_{a_{\tilde c_g}, b_{\tilde c_g}, \tilde c_g}.
\end{equation*}
  as $k \to \infty$.  
\end{lemma}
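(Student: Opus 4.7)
The plan is to mirror the proof of \Cref{lem:iterate_ab}, substituting $\tilde c_g$ for $c_g$ and using the conclusion of \Cref{lem:c_2nditerate} as the starting point. Given $\eps>0$, by \Cref{lem:c_2nditerate} it suffices to choose $\delta \in (0,\eps)$ small and $K$ large so that, for every $x \in \cS_{a_{c_g}-\delta,\, b_{c_g}+\delta,\, \tilde c_g+\delta}$, one has $\psi^{(2k)}(x) \in \cS_{a_{\tilde c_g}-\eps,\, b_{\tilde c_g}+\eps,\, \tilde c_g+\eps}$ for all $k \ge K$.

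A useful preliminary identity is $f(1+c_g, a_{c_g}) = b_{c_g}$, and in particular $b_g' = b_{c_g}$. Setting $p := f(1+c_g, a_{c_g})$, one computes ${\sf j}(c_g, p) = f(1+c_g,\, f(1, p)) = f(1+c_g,\, {\sf i}(c_g, a_{c_g})) = f(1+c_g, a_{c_g}) = p$, so $p$ is a fixed point of ${\sf j}(c_g, \cdot)$ in $(0,1)$, and uniqueness in \Cref{lem:two_fold_iteration} forces $p = b_{c_g}$. I would also note that ${\sf i}(c, x)$ is decreasing in $c$ while ${\sf j}(c, x)$ is increasing in $c$, so the fixed points satisfy $a_{\tilde c_g} > a_{c_g}$ and $b_{\tilde c_g} < b_{c_g}$ (recall $\tilde c_g < c_g$ from \Cref{lem:c_2nditerate}).

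The core computation is
\[
\varphi^2(a,b,c) = \big({\sf i}(c,a),\; f(1+g(b,c),\, f(1,b)),\; g(f(1+c,a),\, g(b,c))\big),
\]
evaluated at the base point $(a_{c_g}, b_{c_g}, \tilde c_g)$. The first coordinate equals ${\sf i}(\tilde c_g, a_{c_g}) > a_{c_g}$ (using $a_{c_g} < a_{\tilde c_g}$ and \Cref{lem:two_fold_iteration}); the second equals ${\sf j}(\tilde c_g, b_{c_g}) < b_{c_g}$ (using $b_{c_g} > b_{\tilde c_g}$); and the third equals $g(f(1+\tilde c_g, a_{c_g}),\, \tilde c_g)$, which is \emph{strictly} less than $\tilde c_g$ because strict monotonicity of $f$ in its first argument gives $f(1+\tilde c_g, a_{c_g}) < f(1+c_g, a_{c_g}) = b_g'$, and $g(b_g', \tilde c_g) = \tilde c_g$. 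By continuity, for all sufficiently small $\delta$, $\psi^2$ maps $\cS_{a_{c_g}-\delta,\, b_{c_g}+\delta,\, \tilde c_g+\delta}$ into itself with strict improvement in each of the three coordinates. Iterating $\psi^2$, the $a$ and $b$ bounds are governed by the 1D contractions ${\sf i}(\tilde c_g+\delta,\cdot)$ and ${\sf j}(\tilde c_g+\delta,\cdot)$ of \Cref{lem:two_fold_iteration}; they converge to $a_{\tilde c_g+\delta}$ and $b_{\tilde c_g+\delta}$, respectively, while the $c$ bound stays $\le \tilde c_g+\delta$. Continuity of these fixed points in $c$ then yields the claim as $\delta \to 0$.

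The main obstacle is controlling the $c$-coordinate. The identity $b_g' = b_{c_g}$ makes $g(b_{c_g}, \tilde c_g) = \tilde c_g$ an equality with no slack, so the single-step strategy of \Cref{lem:iterate_ab} (where $g(1, c_g+\delta) < c_g+\delta$ strictly) or of \Cref{lem:c_2nditerate} (where $g(b_g'', c_g) < c_g$ strictly) cannot be invoked at the level of $\psi$ alone. The remedy is the two-step computation above: slack in the $c$-coordinate of $\varphi^2$ is produced by the strict inequality $f(1+\tilde c_g, a_{c_g}) < f(1+c_g, a_{c_g})$, which follows from $\tilde c_g < c_g$ and strict monotonicity of $f$ in its first argument, and this slack is exactly what allows iteration of $\psi^2$ to close.
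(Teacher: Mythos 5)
Your proposal is essentially correct and pursues the same basic strategy as the paper — iterate $\varphi^2$ from a box near $(a_{c_g}, b_{c_g}, \tilde c_g)$ and feed the bounds into the one-dimensional contraction of \Cref{lem:two_fold_iteration} — but with two genuine differences worth recording.

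The identity $f(1+c_g, a_{c_g}) = b_{c_g}$, and hence $b_g' = b_{c_g}$, is a nice structural observation the paper does not make; the paper simply carries $b_g' = \max(f(1+c_g,a_{c_g}), b_{c_g})$ and its perturbation $b_g'' = \max(f(1+c_g+\delta, a_{c_g}-\delta), b_{c_g}+\delta)$ through the estimates. Your identity explains cleanly why a single-step argument à la \Cref{lem:c_2nditerate} cannot close here ($g(b_{c_g},\tilde c_g) = \tilde c_g$ holds with equality, so there is no slack at the level of $\psi$), and why the two-step computation of $\varphi^2$ is forced. Your continuity argument from the exact base point is also lighter than the paper's bookkeeping, which introduces two separate slack parameters: it fixes $\delta' > 0$ with $\tilde c_g + \delta' < c_g$ and then shrinks $\delta$ until the fixed point of $x\mapsto g(b_g'',x)$ lies below $\tilde c_g+\delta'$, so that the $c$-bound stays at $\tilde c_g+\delta'$ through \emph{both} $\psi$-steps.

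The one place where your argument is imprecise is the claim that ``the $b$ bound is governed by the 1D contraction ${\sf j}(\tilde c_g+\delta,\cdot)$''. The second coordinate of $\varphi^2(a,b,c)$ is $f(1+g(b,c),\,f(1,b)) = {\sf j}(g(b,c),\,b)$, not ${\sf j}(c,b)$, and the intermediate parameter $g(b,c)$ need not stay below $\tilde c_g+\delta$. Indeed, with $b\le b_{c_g}+\delta$ and $c\le \tilde c_g+\delta$ you only get $g(b,c)\le g(b_{c_g}+\delta,\tilde c_g+\delta)=:\gamma_\delta$; since $g(b_{c_g},\tilde c_g)=\tilde c_g$ exactly and $\partial_b g,\partial_x g>0$, the quantity $\gamma_\delta$ may well exceed $\tilde c_g+\delta$. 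The correct conclusion is that the $b$-bound iterates under ${\sf j}(\gamma_\delta,\cdot)$ and converges to $b_{\gamma_\delta}$. This is repairable in one line — $\gamma_\delta\to\tilde c_g$ as $\delta\to 0$, and $\gamma_\delta<c_d$ for small $\delta$, so $b_{\gamma_\delta}\to b_{\tilde c_g}$ by continuity of the fixed point — but you should state it. Alternatively, adopt the paper's two-parameter bookkeeping, which pins the $c$-bound at $\tilde c_g+\delta'$ at every intermediate step and so yields exactly the maps ${\sf i}(\tilde c_g+\delta',\cdot)$ and ${\sf j}(\tilde c_g+\delta',\cdot)$.
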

\begin{proof}
First fix $\delta'>0$ so that $\tilde c_g + \delta' < c_g$. Then choose $\delta$ small enough such that letting $b_g''$ be as in \Cref{eq:bg''}, $\tilde c_g + \delta' $ is bigger than the fixed point of $x \mapsto g(b_g'',x)$.

By \Cref{lem:c_2nditerate}, we see that for all $k$ large enough
$$
\psi^{(2k)} (\cS_{0,1,c})\subset \cS_{a_{c_g}-\delta, b_{c_g+\delta}, \tilde c_g + \delta'}
$$
By continuity of $\varphi$, it is enough to show that  for all $a \ge a_{c_g} - \delta, b \le b_{c_g}+\delta$,
\begin{equation*}
\psi^{(2)}(\cS_{a,b,\tilde c_g+\delta'}) \subseteq \cS_{{\sf i}(\tilde c_g+\delta, a), {\sf j} (\tilde c_g+\delta, b), \tilde c_g+\delta'}, \qquad {\sf i}(\tilde c_g, a) \ge a_{c_g}-\delta, \quad {\sf j} (\tilde c_g, b) \le b_{c_g}+\delta.
\end{equation*}
    Note that 
    \begin{equation*}
        \varphi(a,b, \tilde c_g+\delta') = (f(1,b), f(1+\tilde c_g+\delta', a), g(b,\tilde c_g+\delta'))
    \end{equation*}
    and
    $$
    g(b,\tilde c_g+\delta') \le g(b_{c_g}+\delta, \tilde c_g+\delta') \le g(b_g'', \tilde c_g + \delta') \le \tilde c_g+\delta'.
    $$
    since $x > g(b_g'',x) $ if and only if $x $ is at least the fixed point of $x \mapsto g(b_g'',x)$ and $\delta' $ is chosen to satisfy this.
    Thus 
    \begin{equation*}
        \psi(\cS_{a,b, \tilde c_g} )\subseteq \psi(\cS_{f(1,b), f(1+\tilde c_g+\delta', a), \tilde c_g+\delta'}).
    \end{equation*}
    Again observe,
    $$
    \varphi(f(1,b), f(1+\tilde c_g+\delta', a), \tilde c_g+\delta') = ({\sf i}(\tilde c_g+\delta',a) , {\sf j}(\tilde c_g+\delta',b), g(f(1+\tilde c_g+\delta', a), \tilde c_g+\delta'))
    $$
    
    Now observe that since $a \ge a_{c_g} -\delta$ and $x \mapsto f(1+c,x)$ is decreasing, $c \mapsto f(1+c,x)$ is increasing,  and $\tilde c_g+\delta' < c_g$
    $$
    f(1+\tilde c_g+\delta', a) \le f(1+c_g+\delta, a_{c_g}-\delta)  \le b_g''
    $$
    Thus 
    $$
    g(f(1+\tilde c_g+\delta', a), \tilde c_g+\delta') \le g(b_g'', \tilde c_g+\delta') \le \tilde c_g+\delta'.
    $$
Thus 
$$
 \psi\circ \psi(\cS_{a,b, \tilde c_g} ) \subset \cS_{{\sf i}(\tilde c_g+\delta',a) , {\sf j}(\tilde c_g+\delta',b), \tilde c_g+\delta'}
$$
Thus we are only left to prove the inequalities involving ${\sf i}, {\sf j}$.
    Also,  for the same reason,
    \begin{equation*}
        f(1+\tilde c_g+\delta', a) \le f(1+c_g, a_{c_g} - \delta) 
    \end{equation*}
    and thus
    \begin{equation*}
        {\sf i}(\tilde c_g+\delta', a) \ge  f(1, f(1+ c_g, a_{c_g} - \delta)) = {\sf i}(c_g, a_{c_g} - \delta)  \ge  a_{c_g} - \delta.
    \end{equation*}
    Similarly, 
    \begin{equation*}
        {\sf j}(\tilde c_g+\delta',b) \le {\sf j} (c_g, b_{c_g}+\delta) \le b_{c_g} + \delta. 
    \end{equation*}
     and we are done.
\end{proof}

\subsection{Finding approximations analytically}\label{sec:anal_approx}
We now explain how to analytically find approximations for $c_g,a_{c_g}, b_{c_g}, c_g$ and $\tilde c_g, a_{\tilde c_g}, b_{\tilde c_g}$. Recall the strategy as outlined at the beginning of this section:  all the functions $x \mapsto g(b,x)$, $x \mapsto {\sf i}(c,x), x \mapsto {\sf j}(c,x)$ have unique fixed point in $[0,1)$, and furthermore, the function evaluated at $x$ is at least $x$ if and only if $x $ is smaller than the fixed point. This can be utilized to approximate the parameters in the following sequence of steps.
\begin{itemize}
    \item Find $c_{g,1},c_{g,2}$ so that $g(1,c_{g,1}) >c_{g,1}$ and $g(1,c_{g,2})<c_{g,2}$. 
    \item Next, find $a_{c_{g,1}}$, $a_{c_{g,2}}$ such that ${\sf i}(c_{g,1}, a_{c_{g,1}}) < a_{c_{g,1}}$ and  ${\sf i}(c_{g,2}, a_{c_{g,2}})>a_{c_{g,2}}$. Similarly, find $b_{c_{g,1}}$, $b_{c_{g,2}}$ such that ${\sf j}(c_{g,1}, b_{c_{g,1}}) >b_{c_{g,1}}$ and  ${\sf j}(c_{g,2}, b_{c_{g,2}})<b_{c_{g,2}}$.
\end{itemize}

\begin{lemma}\label{lem:anal_1}
    We have 
    \begin{align*}
        c_{g,1}&\le c_g \le c_{g,2}\\
        a_{c_{g,2}} &\le  a_{c_g}  \le a_{c_{g,1}}\\
        b_{c_{g,1}}& \le  b_{c_g}\le b_{c_{g,2}}.
    \end{align*}
\end{lemma}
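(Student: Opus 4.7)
The plan is to exploit the ``if and only if'' characterizations provided by \cref{lem:fixed_point_of_upper_bound,lem:two_fold_iteration}, together with monotonicity of the true fixed points in the parameter $c$.

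The bounds on $c_g$ are immediate. By the last assertion of \cref{lem:fixed_point_of_upper_bound} (restated in \cref{lem:fixed_g}), $g(1,x) > x$ if and only if $x < c_g$, so the assumptions $g(1,c_{g,1}) > c_{g,1}$ and $g(1,c_{g,2}) < c_{g,2}$ give $c_{g,1} \le c_g \le c_{g,2}$.

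For the bounds on $a_{c_g}$ and $b_{c_g}$, I would first establish that, viewed as functions of the parameter, the true fixed point $c \mapsto a_c$ (of $x \mapsto {\sf i}(c,x)$) is decreasing and $c \mapsto b_c$ is increasing. The key observation is that $f(\alpha,x)$ is increasing in $\alpha$ (trivial from~\eqref{eq:f}, since $\partial_\alpha(1+\alpha x) = x \ge 0$), while $f(1,\cdot)$ is decreasing by~\eqref{eq:f'} (since $1 < 2$). Hence ${\sf i}(c,x) = f(1, f(1+c,x))$ is decreasing in $c$, and ${\sf j}(c,x) = f(1+c, f(1,x))$ is increasing in $c$. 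To promote this to monotonicity of the fixed points: for $c_1 \le c_2$, we have ${\sf i}(c_2, a_{c_1}) \le {\sf i}(c_1, a_{c_1}) = a_{c_1}$, and the ``iff'' clause of \cref{lem:two_fold_iteration} applied at parameter $c_2$ then yields $a_{c_2} \le a_{c_1}$. The argument for $b_c$ is symmetric.

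Finally I would combine these pieces. For $a_{c_g} \le a_{c_{g,1}}$: the hypothesis ${\sf i}(c_{g,1}, a_{c_{g,1}}) < a_{c_{g,1}}$ together with the ``iff'' part of \cref{lem:two_fold_iteration} shows that $a_{c_{g,1}}$ exceeds the true fixed point of $x \mapsto {\sf i}(c_{g,1}, x)$, which is itself at least $a_{c_g}$ by the monotonicity just established (since $c_{g,1} \le c_g$). The lower bound $a_{c_{g,2}} \le a_{c_g}$ is analogous, and the bounds on $b_{c_g}$ are proved in the same way using that $c \mapsto b_c$ is increasing. I do not expect any serious obstacle here; the whole argument is a short exercise in combining the characterizations already at our disposal.
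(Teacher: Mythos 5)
Your proof is correct and follows essentially the same route as the paper's: both rely on (i) ${\sf i}$ decreasing and ${\sf j}$ increasing in $c$ (because $f(\alpha,x)$ is increasing in $\alpha$ and $f(1,\cdot)$ is decreasing), and (ii) the ``iff'' characterizations from \cref{lem:fixed_point_of_upper_bound,lem:two_fold_iteration}. The only difference is cosmetic: you isolate the monotonicity of the true fixed points $c\mapsto a_c$ and $c\mapsto b_c$ as an explicit intermediate step, whereas the paper applies the same ingredients inline via the auxiliary fixed point $\tilde a_{c_{g,1}}$ of ${\sf i}(c_{g,1},\cdot)$.
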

\begin{proof}
The first item is immediate from \Cref{lem:fixed_point_of_upper_bound}.    For the inequalities invovling $a$, let $\tilde a_{c_{g,1}}$ denote the unique  fixed point of $x \mapsto {\sf i}(c_{g,1}, x)$ in $(0,1)$. By choice and properties of ${\sf i}$, $a_{c_{g,1}} > \tilde a_{c_{g,1}} $. But
\begin{equation*}
    \tilde a_{c_{g,1}} = {\sf i}(c_{g,1},\tilde a_{c_{g,1}}) = f(1,f(1+c_{g,1}, \tilde a_{c_{g,1}})) \ge {\sf i}(c_g, \tilde a_{c_{g,1}})
\end{equation*}
and hence $a_{c_{g,1}} \ge \tilde a_{c_{g,1}}  > a_{c_g}$. The other inequalities follow exactly the same line of logic, which we skip.
\end{proof}
Now define 
$$
b_{g,1}' = \max(f(1+c_{g,1}, a_{c_{g,1}}), b_{c_{g,1}}) \qquad  b_{g,2}' = \max(f(1+c_{g,2}, a_{c_{g,2}}), b_{c_{g,2}}), 
$$
and then we judiciously choose $\tilde c_{g,1}$ such that $g(b'_{g,1},\tilde c_{g,1}) > \tilde c_{g,1}$ (by judicious, we mean as usual as close as the fixed point as possible) and  similarly choose $\tilde c_{g,2}$ to be such that $g(b'_{g,1},\tilde c_{g,2}) < \tilde c_{g,2}$. Then  define $a_{\tilde c_{g,i}},  b_{\tilde c_{g,i}}$ for $i \in \{1,2\}$ be such that 
$$
{\sf i}(\tilde c_{g,1}, a_{\tilde c_{g,1}}) <  a_{\tilde c_{g,1}} \qquad {\sf i}(\tilde c_{g,2}, a_{\tilde c_{g,2}})>  a_{\tilde c_{g,2}}, \qquad {\sf j}(\tilde c_{g,1}, b_{\tilde c_{g,1}}) >  b_{\tilde c_{g,1}}, \qquad {\sf j}(\tilde c_{g,2}, b_{\tilde c_{g,2}})<  b_{\tilde c_{g,2}}.
$$
The motivation for these definitions is the same  as before: choose an approximation of the fixed point so that the inequaliities are in the good direction. Then
\begin{lemma}\label{lem:anal_2}
    \begin{align*}
    \tilde c_{g,1} &\le  \tilde c_g \le \tilde c_{g,2}\\
     a_{\tilde c_{g,2}} &\le a_{\tilde c_g} \le   a_{\tilde c_{g,1}}\\
        b_{\tilde c_{g,1}} &\le b_{\tilde c_g} \le   b_{\tilde c_{g,2}}
    \end{align*}
\end{lemma}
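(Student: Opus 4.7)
The plan is to mimic \Cref{lem:anal_1} very closely, propagating the approximations from that lemma through one further layer of iterations. Specifically, I will first compare the approximations $b_{g,1}'$ and $b_{g,2}'$ to the true $b_g'$, then use this to trap $\tilde c_g$ via monotonicity of the fixed point of $x \mapsto g(b,x)$ in $b$, and finally trap $a_{\tilde c_g}$ and $b_{\tilde c_g}$ via monotonicity of the fixed points of ${\sf i}(c,\cdot)$ and ${\sf j}(c,\cdot)$ in $c$.

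For the first comparison, since $x \mapsto f(1+c,x)$ is decreasing in $x$ for $c<1$ by \eqref{eq:f'} and $c \mapsto f(1+c,x)$ is trivially increasing in $c$, the inequalities $c_{g,1} \le c_g$ and $a_{c_g} \le a_{c_{g,1}}$ from \Cref{lem:anal_1} give
\[ f(1+c_{g,1},a_{c_{g,1}}) \le f(1+c_{g,1},a_{c_g}) \le f(1+c_g,a_{c_g}). \]
Combined with $b_{c_{g,1}} \le b_{c_g}$, this yields $b_{g,1}' \le b_g'$; the opposite inequality $b_g' \le b_{g,2}'$ follows symmetrically. Next, I would show that $c_0(b):={}$fixed point of $g(b,\cdot)$ (guaranteed by \Cref{lem:fixed_point_of_upper_bound}) is increasing in $b$: for $b_1 \le b_2$, one has $c_0(b_1) = g(b_1,c_0(b_1)) \le g(b_2,c_0(b_1))$ by \eqref{eq:g'}, and \Cref{lem:fixed_point_of_upper_bound} then forces $c_0(b_1) \le c_0(b_2)$. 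Therefore $c_0(b_{g,1}') \le \tilde c_g \le c_0(b_{g,2}')$, and the chosen approximations $\tilde c_{g,i}$ trap $\tilde c_g$: the defining inequality $g(b_{g,1}',\tilde c_{g,1}) > \tilde c_{g,1}$ forces $\tilde c_{g,1} < c_0(b_{g,1}') \le \tilde c_g$, and symmetrically $\tilde c_g \le \tilde c_{g,2}$.

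For the $a$ and $b$ inequalities, I would first observe that ${\sf i}(c,x) = f(1,f(1+c,x))$ is decreasing in $c$ (since $c \mapsto f(1+c,x)$ is increasing and $y \mapsto f(1,y)$ is decreasing by \eqref{eq:f'}), while ${\sf j}(c,x) = f(1+c,f(1,x))$ is increasing in $c$. By exactly the same fixed-point-monotonicity one-liner as above, the fixed point $\alpha(c)$ of ${\sf i}(c,\cdot)$ is then decreasing in $c$ and the fixed point $\beta(c)$ of ${\sf j}(c,\cdot)$ is increasing in $c$. Combined with $\tilde c_{g,1} \le \tilde c_g$, this gives $\alpha(\tilde c_{g,1}) \ge a_{\tilde c_g}$; since $a_{\tilde c_{g,1}}$ was chosen so that ${\sf i}(\tilde c_{g,1},a_{\tilde c_{g,1}}) < a_{\tilde c_{g,1}}$, \Cref{lem:two_fold_iteration} yields $a_{\tilde c_{g,1}} > \alpha(\tilde c_{g,1}) \ge a_{\tilde c_g}$. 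The three remaining inequalities, $a_{\tilde c_{g,2}} \le a_{\tilde c_g}$ and the two on $b_{\tilde c_g}$, follow identically.

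I do not anticipate a serious obstacle: this lemma is essentially \Cref{lem:anal_1} with one additional bookkeeping layer. The only conceptual inputs beyond \Cref{lem:anal_1} are the three parameter-monotonicities of $c_0(\cdot), \alpha(\cdot), \beta(\cdot)$, each a short consequence of \eqref{eq:g'} and \eqref{eq:f'}. The main delicacy is tracking the correct direction of each monotonicity; in particular, verifying $b_{g,1}' \le b_g'$ uses that the approximation $a_{c_{g,1}}$ overshoots $a_{c_g}$, but since $x \mapsto f(1+c,x)$ is decreasing, this overshoot \emph{decreases} the value of $f$ in the good direction.
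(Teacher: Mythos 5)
Your proof is correct and follows essentially the same route as the paper: introduce the ``intermediate'' fixed points (the paper's $c'_{g,1}$ is your $c_0(b'_{g,1})$), pin them between the chosen approximations and the true quantities via the last sentence of \Cref{lem:fixed_point_of_upper_bound} and \Cref{lem:two_fold_iteration}, and bootstrap using the monotonicities from \eqref{eq:f'} and \eqref{eq:g'}. The only difference is that you spell out the details the paper leaves to the reader (the comparison $b'_{g,1} \le b'_g \le b'_{g,2}$, and the monotonicity in $c$ of the fixed points of ${\sf i}(c,\cdot)$ and ${\sf j}(c,\cdot)$), which is exactly what ``by choice and \Cref{lem:anal_1}'' and ``in a similar way to \Cref{lem:anal_1}'' are gesturing at.
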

\begin{proof}
Note $b_{g,1}' \le b_g'$ by choice and \Cref{lem:anal_1}. Let $c_{g,1}'$ be the fixed point of $x \mapsto g(b_{g,1}', x)$. Thus $$\tilde c_{g,1} <c'_{g,1} = g(b_{g,1}', c'_{g, 1}) \le g(b_{g}', c'_{g, 1})$$
which implies $c'_{g,1} \le \tilde c_g$ which is the fixed point of $x \mapsto g(b_g', x)$. Thus $\tilde c_{g,1} \le c_{g,1}' \le \tilde c_g$ as desired. The other direction follows exactly similarly by noting $b_{g,2}' \ge b_g'$.

The inequalities involving $a$ and $b$ can be proven in a similar way to those in \Cref{lem:anal_1} using the bounds for $c$, we leave the details to the reader.
\end{proof}
In reality, we would need a good upper bound for $c_g, \tilde c_g , b_{c_g},\tilde b_{c_g}$ and a good lower bound for $a_{c_g}, a_{\tilde c_g}$. We record these bounds in the following tables.

\begin{table}
    \centering
    \begin{tabular}{|c|c|c|c|c|c|c|}
    \hline
  $d \to$  &2 &3&4&5&6&7  \\\hline

   $a_{c_{g},2}$  &.48&.4 & .3495&  .308&.2734 &.2332 \\ \hline
    $b_{c_g,2}$ &.754 &  .54& .43& .362& .318& .3006\\ \hline
     $c_{g,2}$ &.466 &  .17 & .074 & .0342&.0165&.0081\\ \hline
\end{tabular}
\caption{Approximations of $c_g, a_{c_g}, b_{c_g}$}\label{table:abc_bound}
\end{table}

\begin{table}
    \centering
    \begin{tabular}{|c|c|c|}
    \hline
  $d \to$   &2&7  \\
  \hline
  $b'_{g,2}$ &.755 &.3007\\ \hline
   $a_{\tilde c_{g,2}}$ & .51& .26435\\\hline
    $b_{\tilde c_{g,2}}$  & .664& .26475 \\\hline
     $\tilde c_{g,2}$  &.267 &$3.6\times 10^{-5}$\\\hline
\end{tabular}
\caption{Approximations of $\tilde c_g, a_{\tilde c_{g,2}}, b_{\tilde c_{g_2}}$ $d=2,7$.}\label{table:abc_2nd_iter}
\end{table}
Since we only do even iterations in \Cref{table:abc_bound,table:abc_2nd_iter}, we do one more iteration of $\varphi$ to the values in \Cref{table:abc_bound} for $3 \le d \le 6$ and to that in \Cref{table:abc_2nd_iter} for $d = 2,7$. 
\begin{table}
    \centering
       \begin{tabular}{|c|c|c|c|c|c|c|}
       \hline
  $d \to$  &2 &3&4&5&6&7  \\
  \hline
   $a^o_{c_{g},2}$  &.52&.41 & .3493& .307 &.273 &.26435 \\ \hline
    $b^o_{c_g,2}$ &.665 & .55 &.43 & .362&.318 & .26476\\ \hline
     $c^o_{g,2}$ &.233   &  .063 & .02 &.0016 &.0003&$1.7616\times 10^{-5}$\\\hline 
\end{tabular} 

\caption{Approximations for odd round of iterations}\label{table:abc_odd}
\end{table}

Taking the minimum over rows involving estimates of $a_*$ and maximum over the rows involving the estimates of $b_*, c_*$ from \Cref{table:abc_odd,table:abc_bound} for $3 \le d \le 6$ and \Cref{table:abc_odd,table:abc_2nd_iter} for $d=2,7$ 
we obtain the final table, \Cref{table:abc_final}.

\begin{table}
    \centering
    \begin{tabular}{|c|c|c|c|c|c|c|}
    \hline
  $d \to$  &2 &3&4&5&6&7  \\
  \hline
   $\hat a_*$  &.51&.4 & .3493& .307 &.273 &.26435 \\ \hline
    $\hat b_*$ &.665 & .55 &.43 & .362&.318 & .26476\\ \hline
     $\hat c_*$ &.267   &  .17 & .074 &.0342 &.0165&$3.6\times 10^{-5}$\\ \hline
\end{tabular}
\caption{Final approximations}\label{table:abc_final}
\end{table}



\begin{proof}[Proof of \Cref{prop:estimate_fixed_point}] The reader can check the values in \Cref{table:abc_final} for $\hat a_*$ is bigger than that in \Cref{table:abc_final_section2} and that of $\hat b_*, \hat c_*$ are smaller than that in \Cref{table:abc_final_section2}.
\end{proof}

\subsection{Proof of \Cref{lem:two_fold_iteration}}
In this section we write $f_a(x) = f(a,x)$

The derivative is 
\begin{align}
    f_a'(x)& = -\frac{(2-a)d}{(2x+1)^2}(f(x))^{\frac{d-1}{d}}=-(2-a)d \frac{(1+ax)^{d-1}}{(1+2x)^{d+1}}<0 \text{ for $1 \le a <2$}\label{eq:derivative}\\
    f_a''(x)&=\frac{df(x)(2-a)(a + (2 - a)d + 4ax + 2)}{(2x + 1)^2(ax + 1)^2}>0 \text{ for $1 \le a <2$} \label{eq:double_derivative}
\end{align}
This for this range of $a$, $f$ is decreasing.
\begin{proof}[Proof of \Cref{lem:two_fold_iteration}]
    Note that with the new notation used in this section
    $$
    {\sf i}(x) = f_{1} \circ f_{1+c} (x); \qquad \qquad {\sf j}(x) = f_{1+c} \circ f_1(x)
    $$
One can easily plot the derivatives of these functions for a range of choices of $c$ and check the results for $0<x<1$, and perhaps that is the most practical way to be convinced of the lemma. We could not find a straightforward algebraic way to prove this lemma, so we employ the following rather undesirable case by case analysis.

First observe that by \eqref{eq:derivative}, $f_1$ and $f_{1+c}$ are both decreasing and hence both ${\sf i}$ and ${\sf j} $ are increasing in $[0,1]$.
    We will show that for all $2 \le d \le 7$, $|{\sf i}'(x)|<1$ for all $x \in (0,1)$. This ensures that both ${\sf i}, {\sf j}$ are contractions in $(0,1)$, thus the iterations of ${\sf i}$ and ${\sf j}$ must converge to a fixed point. Furthemore by Rolle's theorem, this fixed point must be unique.

    Our strategy is as follows. Since both $f_1, |f'_1|$ are decreasing in $[0,1]$, and since
    $$
    |f'_{1+c}(x)| \le d\frac{(1+(1+c)x)^{d-1}}{(1+2x)^{d+1}} \le d\frac{(1+(1+c_d)x)^{d-1}}{(1+2x)^{d+1}},
    $$
    for any interval $[a_1,a_2] \subset [0,1]$, we conclude 
    \begin{align*}
           0 \le  {\sf i}'(x) \le  |f_1'(f_{1+c} (a_2))||f'_{1+c}(a_1)| \le |f_1'(f_{1} (a_2))|d\frac{(1+(1+c_d)a_1)^{d-1}}{(1+2a_1)^{d+1}}\text{ for }a_1\le x\le a_2\\
0 \le {\sf j}'(x) \le  |f_{1+c}'(f_{1} (a_2))||f'_{1}(a_1)| \le |f'_{1}(a_1)| d\frac{(1+(1+c_d)f_1(a_2))^{d-1}}{(1+2f_1(a_2))^{d+1}}\text{ for }a_1\le x\le a_2
    \end{align*}
    Denote
   $$
\xi(x) = \xi_{c_d}(x) = d\frac{(1+(1+c_d)x)^{d-1}}{(1+2x))^{d+1}}, \qquad \eta(x) = d\frac{(1+x)^{d-1}}{(1+2x)^{d+1}}.
$$
With this new notation, we have
     \begin{align*}
           0 \le  {\sf i}'(x) &\le \eta(f_1(a_2))\xi(a_1)\text{ for }a_1\le x\le a_2\\
0 \le {\sf j}'(x) &\le   \xi(f_1(a_2))\eta(a_1)\text{ for }a_1\le x\le a_2
    \end{align*}

Clearly $\xi(x) \le \eta(x)$ for all $x \in [0,1]$, thus we can further approximate
\begin{equation}
    \max\{{\sf i}'(x), {\sf j}'(x)\} \le \xi(f_1(a_2))\xi(a_1)\text{ for }a_1\le x\le a_2\label{eq:xi_approx}
\end{equation}
    It is now enough to pick a $c_d>c_g$ and then find a partition $[\alpha_1,\beta_1) \cup [\alpha_2, \beta_2) \cup \ldots \cup [\alpha_k, \beta_k]$ of $[0,1]$ so that $\xi(f_1(\beta_i))\xi(\alpha_i) <1$ for all $1 \le i \le k$.
This can be done by hand. We provide the partition below for $2 \le d \le 7$ and leave it to the diligent reader to check \eqref{eq:xi_approx} holds for each partition. It is interesting to note that the partitions get finer as we move closer to the critical value of $d$ (which is equal to $7$ in this case).
\begin{itemize}
\item For $d=2$, we choose $c_d = .47$. The crudest choice partition $a_1 = 0, a_2=1$ suffices.

\item For $d=3$, we take $c_d = .18$ and we need the partition $$[0,.15] \cup (.15,.65] \cup (.65,1].$$ 

\item For $d=4$, we take $c_d =.08$ and we partition $$[0,.08)\cup[.08,.2) \cup [.2,.41) \cup [.41,1].$$

\item For $d=5$ we take $c_d = .04$ and we partition $$[0,.05)\cup[.05,.1) \cup [.1,.16) \cup [.16,.23] \cup (.23,.33] \cup (.33,.5] \cup (.5,1].$$

\item For $d=6$ we take $c_d = .02$ and we partition \begin{align*}[0,.04)\cup[.04,.08) \cup [.08,.11) \cup [.11,.13]  \cup (.13,.16] \cup (.16,.19]  \\ \cup  (.19, .23] \cup (.23,.27] \cup (.27,.5] \cup [.5,.9) \cup [.9,1].\end{align*}

\item For $d=7$ we take $c_d = .009$ and we partition
\begin{multline*}
    (0,.03] \cup [.03,.07) \cup [.07,.1) \cup [.1,.12) \cup [.12,14) \cup [.14,.15) \cup (\cup_{i=0}^6[.15 + .01i, .16+.01i]) \\\cup [.22,.225) \cup [.225,.23) \cup [.23 , .238) \cup [.238, .245) \cup [.245,.253) \\\cup [.253,.262) \cup [.262, .272) \cup [.272,.28) \cup [.28,.29) \cup [.29,.3) \cup [.3, .31)\\ \cup [.31, .325) \cup [.325,.34) \cup [.34, .365) \cup [.365,.4) \cup [.4,.45) \cup [.45, .55) \cup [.55, .85) \cup [.85,1),
\end{multline*}
\end{itemize}
which completes the proof.
\end{proof}

\bibliographystyle{abbrv}
\bibliography{library}
\end{document}